\providecommand{\U}[1]{\protect\rule{.1in}{.1in}}
\newtheorem{theorem}{Theorem}
\newtheorem{definition}[theorem]{Definition}
\newtheorem{lemma}[theorem]{Lemma}
\newtheorem{proposition}[theorem]{Proposition}
\newtheorem{remark}[theorem]{Remark}
\newenvironment{proof}[1][Proof]{\noindent\textbf{#1.} }{\ \rule{0.5em}{0.5em}}
\begin{document}

\title{On axially symmetric one phase free boundary problem in $\mathbb{R}^{n}$}
\author{Yong Liu\\School of Mathematics and Physics, \\North China Electric Power University, Beijing, China,\\Email: liuyong@ncepu.edu.cn
\and Kelei Wang\\School of Mathematics and Statistics, Wuhan University\\Email:wangkelei@whu.edu.cn
\and Juncheng Wei\\Department of Mathematics, \\University of British Columbia, Vancouver, B.C., Canada, V6T 1Z2\\Email: jcwei@math.ubc.ca}
\maketitle

\begin{abstract}
We construct a smooth axially symmetric solution to the classical one phase
free boundary problem in $\mathbb{R}^{n}$, $n\geq3.$ Its free boundary is of
\textquotedblleft catenoid\textquotedblright\ type. This is a higher
dimensional analogy of the Hauswirth-Helein-Pacard solution\cite{Pacard} in
$\mathbb{R}^{2}$. The existence of such solution is conjectured in
\cite[Remark 2.4]{Pacard}.

\end{abstract}

\section{Introduction and main results\label{Sec 1}}

Free boundary problems arise as mathematical models in many different
contexts, e.g., heat conduction, interface dynamics, evolution of ecological
systems. In this paper, we are interested in constructing new solutions for
the following classical one phase free boundary problem in the whole space:%
\begin{equation}
\left\{
\begin{array}
[c]{l}%
\Delta u=0\text{ in }\Omega:=\left\{  u>0\right\}  \subset\mathbb{R}^{n},\\
\left\vert \nabla u\right\vert =1\text{ on }\partial\Omega.
\end{array}
\right.  \label{free}%
\end{equation}
Here $\partial\Omega$ is the free boundary. The regularity theory of
(\ref{free}) has been studied for a long time, see for instances
\cite{C2,Ca1,Ca2,Ca3,Cs,Jerison3,Jerison6}. In the literature, the domain
$\Omega$ in the one phase problem is called \emph{exceptional domain} and the
function $u$ is called \emph{roof function}.

\medskip

The simplest solution to $\left(  \ref{free}\right)  $ is the one-dimensional
solution $x_{n}^{+}.$ This solution is unbounded, which constitutes a major
difficulty for the construction of other solutions using this one dimensional
profile. Another class of solutions of $\left(  \ref{free}\right)  $ is the
cone type solutions (homogeneous functions of degree one). Consider the
Alt-Caffarelli cone in $\mathbb{R}^{n}$ given by
\begin{equation}
\left\vert x_{n}\right\vert <\alpha_{n}\sqrt{x_{1}^{2}+...+x_{n}^{2}}.
\label{cone}%
\end{equation}
It is known that there exists a unique dimensional constant $\alpha_{n}$ (see
\cite{Alt,C1}) such that there is a solution to $\left(  \ref{free}\right)  $
whose free boundary is exactly this cone. It has been proved \cite{Jerison2}
that in dimension $n=7$ (actually also for $n=9,11,13,15$), the solution to
$\left(  \ref{free}\right)  $ corresponding to the cone $\left(
\ref{cone}\right)  $ is a minimizer for the energy functional%
\begin{equation}
J_{0}\left(  u\right)  :=\int\left[  \left\vert \nabla u\right\vert ^{2}%
+\chi_{\left(  0,+\infty\right)  }\left(  u\right)  \right]  .
\end{equation}
For a discussion on the existence and stability of more general cones other
than $\left(  \ref{cone}\right)  ,$ we refer to \cite{Hong,Jerison1}. We
remark that for a cone type solution which is also a minimizer of the energy
functional, it is expected that there should be a family of smooth solutions
to $\left(  \ref{free}\right)  $ whose free boundary is smooth and asymptotic
to the cone.

\medskip

We notice that the cone solution has a singularity at the origin. So far the
only nontrivial smooth solution with simply connected phase we know of is the
so-called Hauswirth-Helein-Pacard solution\cite{Pacard} in the plane(also
called hairpin solution\cite{Jerison5}). To describe this solution, we use
$\Phi$ to denote the map
\[
\left(  x,y\right)  \rightarrow\left(  x+\cos y\sinh x,y+\sin y\cosh x\right)
.
\]
Let $\Omega$ be the image of the region $\left\{  \left(  x,y\right)
:\left\vert y\right\vert <\frac{\pi}{2}\right\}  $ under this map. One checks
directly that
\[
\Omega=\left\{  \left(  x,y\right)  :\left\vert y\right\vert <\frac{\pi}%
{2}+\cosh x\right\}  .
\]
Let $u\left(  x,y\right)  =\cos y\cosh x.$ Then the function
\begin{equation}
U\left(  x,y\right)  =u\circ\Phi^{-1}\left(  x,y\right)  \label{PH}%
\end{equation}
is a solution to $\left(  \ref{free}\right)  $. It turns out that the
Hauswirth-Helein-Pacard solution plays an important role in the analysis of
other solutions of the one phase free boundary problem in the unit disk with
simply connected phase \cite{Jerison5}, similar to the role played by the
catenoids in the minimal surface theory \cite{Colding}.

\medskip

Using complex function theory, Traizet \cite{Traizet} (see also
\cite{Traizet2} for related results) established a one-to-one correspondence
between solutions to $\left(  \ref{free}\right)  $ and a special class of
minimal surfaces in $\mathbb{R}^{3}.$ Under this correspondence, $U$ is
transformed to the catenoid, a classical minimal surface. It also has been
proved there that $U$ is the unique (up to a scaling and the trivial one
dimensional solution) solution in $\mathbb{R}^{2}$ with simply connected phase
(see also \cite{KLT,Ruiz}). Unfortunately the correspondence between one phase
problem and minimal surface is not available in dimensions $n\geq3.$ However,
it is conjectured in \cite[Remark 2.4]{Pacard} that the
Hauswirth-Helein-Pacard solution should still have higher dimensional analogy.
In this paper, we confirm this conjecture.

\medskip

To state our result, we use $\left(  x_{1},...,x_{n-1},z\right)  $ to denote
the coordinate of $\mathbb{R}^{n}$ and set $r=\sqrt{x_{1}^{2}+...+x_{n-1}^{2}%
}.$

\begin{theorem}
\label{main}There exists a solution $u$ to $\left(  \ref{free}\right)  $
satisfying the following properties:\newline(I) $u$ depends only on $r$ and
$\left\vert z\right\vert .$ \newline(II) The positive phase $\Omega:=\left\{
\left(  x_{1},...,x_{n-1},z\right)  \in\mathbb{R}^{n}:u\left(  x_{1}%
,...,x_{n-1},z\right)  >0\right\}  $ can be described by
\[
\mathbb{R}^{n}\backslash\left\{  \left(  x_{1},...,x_{n-1},z\right)
:\left\vert z\right\vert <g\left(  r\right)  \right\}  ,
\]
for a function $g$ with $g\left(  1\right)  =0$ and
\begin{equation}
\lim_{r\rightarrow+\infty}\left(  g^{\prime}\left(  r\right)  r^{n-2}\right)
\in\lbrack0,+\infty). \label{lim}%
\end{equation}
\newline(III) In $\Omega,$ $\partial_{z}u>0$ for $z>0,$ and $\partial_{r}u<0$
for $r>0.$
\end{theorem}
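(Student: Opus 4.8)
The plan is to construct $u$ as a genuine perturbation of the minimal catenoid in $\mathbb{R}^{n}$, so that its free boundary is literally of ``catenoid type.'' Using axial symmetry one writes $\Delta=\partial_{rr}+\tfrac{n-2}{r}\partial_{r}+\partial_{zz}$ and works in the region $\{r>0,\ z>0\}$ of the $(r,z)$-plane, looking for $u$ even in $z$, regular on the axis $\{r=0\}$, and satisfying the Neumann condition $\partial_{z}u=0$ on $\{z=0,\ 0<r<1\}$. Let $\Sigma_{0}$ be the catenoid of revolution in $\mathbb{R}^{n}$ normalized to have neck radius $1$, let $\Omega_{0}$ be the component of its complement containing the $z$-axis, and let $d_{0}$ be the distance to $\Sigma_{0}$ (taken positive in $\Omega_{0}$). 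Since $\Sigma_{0}$ is minimal one has $|\nabla d_{0}|\equiv1$ and $\Delta d_{0}=O(d_{0})$ near $\Sigma_{0}$, so $d_{0}$ is an approximate roof function there. Away from $\Sigma_{0}$, and in particular near the $z$-axis (which lies in the cut locus of the neck sphere $S^{n-2}$, where $d_{0}$ fails to be smooth), I would glue $d_{0}$ to an explicit axially symmetric positive harmonic function across a compact transition region. Cutting off appropriately yields a global approximate solution $\bar u$ on $\Omega_{0}$ with $\bar u=0$ and $|\nabla\bar u|=1$ on $\Sigma_{0}$, and with small, localized errors (the $O(d_{0})$ curvature error near the ends plus cutoff errors in the transition zone).

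Next I would run a Lyapunov--Schmidt reduction. Parametrize the unknown free boundary as the normal graph of a function $\psi$ over $\Sigma_{0}$ and write $u=\bar u+v$; linearizing the conditions $\Delta u=0$ in $\{u>0\}$ and $|\nabla u|=1$ on the free boundary gives a coupled system which, after using the Dirichlet relation $v=\pm\psi$ on $\Sigma_{0}$ to eliminate boundary values, is equivalent to $\Delta v=f$ in $\Omega_{0}$ together with a Jacobi-type equation for $\psi$ built from $J_{\Sigma_{0}}=\Delta_{\Sigma_{0}}+|A|^{2}$ and the Cauchy data of $v$ on $\Sigma_{0}$. The catenoid is non-degenerate modulo the ambient symmetries; restricting to axially symmetric functions leaves only the bounded Jacobi fields coming from translation in $z$ and from dilation, and imposing evenness in $z$ removes the former, so on suitably weighted H\"older spaces the linearized operator is invertible once one quotients out the one-dimensional space generated by the dilation Jacobi field. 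The weights must encode the decay $g'(r)\sim c\,r^{-(n-2)}$ at the two ends (for $n=3$ one uses the logarithmically weighted analogue, reflecting the marginal growth of the dilation field). A contraction-mapping argument then produces, for each small value of the remaining scaling parameter, an exact solution of (\ref{free}); since that parameter corresponds to the exact dilation symmetry of (\ref{free}), the single reduced equation is automatically satisfied, and one normalizes to put the neck at $r=1$, $z=0$, giving $g(1)=0$. The asymptotic statement (\ref{lim}) is then inherited from the end behavior of the catenoid via the weighted bounds on $v$ and $\psi$.

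It remains to check (I)--(III). Statement (I) and the smoothness of the free boundary are built into the construction --- the latter also following a posteriori from the regularity theory for (\ref{free}), since $|\nabla u|=1\neq0$ on $\partial\Omega$ --- and the description of $\Omega$ in (II) as $\mathbb{R}^{n}\setminus\{|z|<g(r)\}$ amounts to knowing the free boundary is an axially symmetric graph $\{|z|=g(r)\}$, which follows from the monotonicity in (III). To get (III) I would argue by sliding: sliding the hyperplanes $\{z=\text{const}\}$ downward and using the asymptotic flatness of the catenoid ends together with the Hopf lemma gives $\partial_{z}u\geq0$ and then $\partial_{z}u>0$ for $z>0$; a companion sliding-and-comparison argument in $r$, anchored at $r=0$ and at $r\to\infty$, gives $\partial_{r}u<0$. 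Strict positivity of $u$ and the exact shape of the zero set follow from the usual nondegeneracy estimates for (\ref{free}).

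The main obstacle I expect is the linear theory on the non-compact exterior domain $\Omega_{0}$: choosing function spaces that at once capture the slow $r^{-(n-2)}$ (or logarithmic) decay at the two ends, correctly handle the single non-compact --- and for $n=3$ borderline --- dilation Jacobi field, and accommodate the matched-asymptotics patching near the $z$-axis where $d_{0}$ degenerates. Closely tied to this, and also delicate, is sharpening the decay estimates enough to pin down the constant in (\ref{lim}) rather than a one-sided bound, and proving the monotonicity of (III) uniformly up to the free boundary and up to $r=0$. An alternative route would be variational --- minimizing the weighted energy over axially symmetric competitors on an exhaustion of $\mathbb{R}^{n}$ --- but there one must separately exclude degeneration to the one-dimensional solution and show the limiting free boundary is genuinely catenoid-shaped, which appears harder to control.
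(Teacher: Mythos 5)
Your proposal is a genuinely different strategy from the paper's. The paper regularizes the characteristic function by a smooth double-well potential $F_{\varepsilon}$, constructs \emph{mountain-pass} critical points $U_{\varepsilon,a}$ of the regularized energy on a large cylinder $\Omega_{a}$ (squeezed between two ordered sub/super-solutions generated by a parabolic flow), then passes to the limit $\varepsilon\to0$ to get a two-phase free boundary solution $V_{a}$, then $a\to\infty$ to get a global solution $W_{k}$ of the two-component problem \eqref{EN} with prescribed logarithmic (or catenoidal) end behaviour, and finally sends $k\to0$ and rescales near the neck to recover a solution of \eqref{free}. You instead propose a direct Lyapunov--Schmidt perturbation off the exact catenoid, using the distance function $d_{0}$ as approximate roof function and inverting a coupled Dirichlet/Jacobi operator in weighted H\"older spaces. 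These are not the same construction, and if your route worked it would actually yield a stronger conclusion (the free boundary globally $C^{2,\alpha}$-close to a fixed catenoid, not merely asymptotic to a catenoidal profile at infinity), which the paper's theorem does not claim.

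However, there is a genuine gap in the proposal, and I do not see how to fix it without a substantially new idea: the Lyapunov--Schmidt/contraction scheme has \emph{no small parameter}. Problem \eqref{free} is scale invariant, so dilating the catenoid does not introduce one (unlike the Allen--Cahn setting of Del Pino--Kowalczyk--Wei, where the intrinsic width of the transition layer provides the small parameter). The error of your approximate solution $\bar u$ is $O(1)$: near the ends the curvature error $\Delta d_{0}=O(|A|^{2}d_{0})$ is small only in a thin tubular neighbourhood, but $\Omega_{0}$ is much larger than any such neighbourhood; near the axis $d_{0}$ hits its cut locus, and the compactly supported gluing to a harmonic interior profile produces cutoff errors that are $O(1)$ in whatever weighted norm you use. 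Since the free boundary condition $|\nabla u|=1$ is genuinely quadratic in $(\nabla v,\psi)$, a contraction requires $\|f\|\ll\|L^{-1}\|^{-2}$, and an $O(1)$ right-hand side destroys that. There is also no obvious way to absorb the $O(1)$ errors into the one remaining dilation mode, because the solvability condition against the (non-compactly supported, in $n=3$ borderline) dilation Jacobi field is a single scalar, whereas the error is $O(1)$ in a whole function space. The paper avoids this obstruction entirely by using min--max variational methods and compactness (regularize, mountain-pass, double limit) rather than perturbation, and it is precisely the absence of a smallness scale that makes the variational detour necessary here.

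Two smaller remarks. First, your sliding argument for (III) is in the right spirit, but in the paper monotonicity is obtained \emph{before} passing to the limits (it is preserved by the parabolic flow and by the min--max class $\mathcal{E}$), which sidesteps subtle issues about applying the Hopf lemma up to a free boundary whose regularity is not yet known. Second, the theorem only asserts $\lim_{r\to+\infty} g'(r)r^{n-2}\in[0,+\infty)$, including possibly $0$; your construction, if it worked, would force a positive limit determined by the ambient catenoid, which is a nontrivially stronger statement and a hint that the perturbative picture is not quite the right one.
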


\begin{remark}
Due to the scaling invariance of the problem, actually we have a family of
solutions $\frac{u\left(  \rho X\right)  }{\rho}$ with $\rho>0$ being a
parameter. It is to be expected that there should exist another two families
of axially symmetric solutions whose free boundaries are asymptotic to the
Alt-Caffarelli cone. The positive phases should have the form $\left\{
\left(  x_{1},...,x_{n-1},z\right)  :\left\vert z\right\vert <h\left(
r\right)  \right\}  ,$ where $h$ is a positive monotone function defined on
$[0,+\infty)$ for the first family of solutions, while $h$ is monotone and
defined on $[1,+\infty)$ with $h\left(  1\right)  =0$ for one of the solutions
in the second family.
\end{remark}

\medskip

Now let us describe the main difficulties and steps of the proof of Theorem
\ref{main} in the case of $n=3.$ The other cases are similar. A solution of
the one phase free boundary problem is \textit{formally} a critical point of
the energy functional $J_{0}.$ Given suitable boundary conditions, while it is
relatively easy to use minimizing arguments to obtain \textit{minimizers} (see
\cite{Alt}), variational methods in general are not directly applicable for
unstable critical points of $J_{0},$ due to the fact that $J_{0}$ is
\textbf{not} differentiable in usual functional spaces. Furthermore for the
solutions we are interested in this paper, they are indeed not minimizers.
Actually, in $\mathbb{R}^{3},$ it is conjectured that any minimizer should be
a trivial solution. (See \cite{C1} and \cite{Jerison1}.) Another difficulty we
are facing is that usually the solutions are unbounded and the energy is
actually equal to infinity.

\medskip

To overcome these difficulties, we proceed the proofs in two steps. In the
first step, for each fixed $k>0$, we construct two-end solutions to the
following two-component free boundary problem
\begin{equation}
\left\{
\begin{array}
[c]{l}%
\Delta u=0\text{ in }\left\{  \left\vert u\right\vert <1\right\}  ,\\
\left\vert \nabla u\right\vert =1\text{ on }\partial\left\{  \left\vert
u\right\vert <1\right\}  ,
\end{array}
\right.  \label{EN}%
\end{equation}
where the nodal set $\{u=0\}$ behaves like $\{|z|=k\log r+b\}$. The solutions
to problem (\ref{EN}) are bounded and relatively easier to deal with, though
we still need to overcome the problem of nonsmooth profiles and regularity
issues since the solutions are not minimizers and are of mountain-pass type in
terms of the new energy functional
\[
J_{1}\left(  u\right)  :=\int\left[  \left\vert \nabla u\right\vert ^{2}%
+\chi_{\left(  -1,+1\right)  }\left(  u\right)  \right]  .
\]
In the second step, we show that the solutions to (\ref{EN}), after some
rescaling, as $k\rightarrow0^{+}$, approach to a nontrivial solution of
(\ref{free}).

\medskip

The paper is organized as follows. From Section \ref{Sec2} to Section
\ref{Sec5}, we prove Theorem \ref{main} in the case $n=3.$ Then in Section
\ref{Sec6} we indicate the necessary modifications needed for general
$n\geq3.$ In Section \ref{Sec2}, we consider a family of regularized problems
and use variational arguments to show the existence of mountain pass type
solutions $U_{\varepsilon,a}$ in bounded domain $\Omega_{a}$. In Section
\ref{Sec3}, we prove that as $\varepsilon$ tends to zero, these mountain pass
solutions converge to a solution $V_{a}$ of \eqref{EN} in $\Omega_{a}$. We
also show the regularity of the free boundary of $V_{a}.$ In Section
\ref{Sec4}, we enlarge the domain by sending $a$ to infinity, and get a
solution $W_{k}$ for \eqref{EN} with prescribed asymptotic behavior at
infinity (nodal set looks like $k\ln r+b$). In Section \ref{Sec5}, we analyze
the precise asymptotic behavior of $W_{k}.$ Then by sending $k$ to zero, we
show that suitable blow up sequence of $W_{k}$ near the origin converges to a
solution of $\left(  \ref{free}\right)  .$ In the last section, we consider
the general case of $n>3.$

\bigskip

\noindent\textit{Acknowledgement.} The research of J. Wei is partially
supported by NSERC of Canada. Part of the paper was finished while Y. Liu was
visiting the University of British Columbia in 2016. He appreciates the
institution for its hospitality and financial support. K. Wang is supported by
\textquotedblleft the Fundamental Research Funds for the Central
Universities". We thank Professor N. Kamburov for pointing out \cite{KLT} to us.

\section{\bigskip Mountain pass solutions for a family of regularized
problems\label{Sec2}}

As we already mentioned in Section \ref{Sec 1}, there are three main
difficulties in dealing with problem (\ref{free}): firstly the energy
functional is not smooth, secondly the solution we are interested in is not a
minimizer and thirdly the solution is unbounded.

To overcome the above mentioned difficulties, we shall regularize the
functional $J_{1}$ and consider a family of smooth potentials $F_{\varepsilon
}\ $which approximates the characteristic function $\chi_{\left(  -1,1\right)
}\left(  \cdot\right)  $ of the interval $\left(  -1,1\right)  $.
$F_{\varepsilon}$ is defined in the following way. Let $\bar{F}$ be a smooth
monotone increasing function in $[0,+\infty)$ such that
\[
\bar{F}\left(  s\right)  =\left\{
\begin{array}
[c]{l}%
s^{2},s\in\left[  0,\frac{1}{2}\right]  ,\\
1-e^{-s},s\in\left(  1,+\infty\right)  .
\end{array}
\right.
\]
We may also assume that $\bar{F}^{\prime\prime}<0$ in $\left(  \frac{3}%
{4},+\infty\right)  .$ It is worth pointing out that the idea of regularizing
the potentials has been explored in some other related contexts, for instances
\cite{BCN,Wo}.

Let $\rho\geq0$ be a cutoff function satisfying $\rho\left(  s\right)
+\rho\left(  -s\right)  =1$ and
\[
\rho\left(  s\right)  =\left\{
\begin{array}
[c]{l}%
1,s<-\frac{1}{2},\\
0,s>\frac{1}{2}.
\end{array}
\right.
\]
For each $\varepsilon>0$ small, we define a smooth even potential
$F_{\varepsilon}$ on the interval $\left[  -1,1\right]  ,$ monotone increasing
in $\left[  -1,0\right]  ,$ to be
\[
F_{\varepsilon}\left(  s\right)  =\rho\left(  s\right)  \bar{F}\left(
\frac{s+1}{\varepsilon}\right)  +\left(  1-\rho\left(  s\right)  \right)
\bar{F}\left(  \frac{-s+1}{\varepsilon}\right)  .
\]
With this definition, $F_{\varepsilon}\leq1,$ and on any compact subinterval
of $\left(  -1,1\right)  ,$ $F_{\varepsilon}\rightarrow1,$ as $\varepsilon
\rightarrow0.$ We also have
\[
F_{\varepsilon}^{\prime\prime}\left(  \pm1\right)  =\frac{2}{\varepsilon^{2}%
}\rightarrow+\infty,\text{ \ as }\varepsilon\rightarrow0.
\]
Then instead of $J_{1}$, we shall consider the regularized functional
\[
\int\left[  \left\vert \nabla u\right\vert ^{2}+F_{\varepsilon}\left(
u\right)  \right]  .
\]
Note that $F_{\varepsilon}$ is a double well type potential and a critical
point of this functional solves the equation%
\begin{equation}
-\Delta u+\frac{1}{2}F_{\varepsilon}^{\prime}\left(  u\right)  =0.
\label{fepsilon}%
\end{equation}

Let $H_{\varepsilon}$ be the heteroclinic solution of the ODE%
\[
H_{\varepsilon}^{\prime\prime}=\frac{1}{2}F_{\varepsilon}^{\prime}\left(
H_{\varepsilon}\right)  ,
\]
with
\[
H_{\varepsilon}\left(  0\right)  =0, \quad H_{\varepsilon}\left(  \pm
\infty\right)  =\pm1.
\]
Heuristically, as $\varepsilon\rightarrow0,$ $H_{\varepsilon}$ converges to
the function
\[
\mathcal{H}\left(  x\right)  =\left\{
\begin{array}
[c]{l}%
x,\text{ \ for }-1<x<1,\\
1,\text{ \ for }x\geq1,\\
-1,\text{ for }x\leq-1.
\end{array}
\right.
\]
This is a one-dimensional solution of the following free boundary problem
\begin{equation}
\left\{
\begin{array}
[c]{l}%
\Delta u=0\text{ in }\Omega:=\left\{  \left\vert u\right\vert <1\right\}
\subset\mathbb{R}^{n},\\
\left\vert \nabla u\right\vert =1\text{ on }\partial\Omega.
\end{array}
\right.  \label{Int}%
\end{equation}
The existence and classification of solutions to this problem has been studied
in \cite{Kam,Liu2,Wangke}.

We would like to construct mountain pass type solutions for $\left(
\ref{fepsilon}\right)  $ on bounded domains with suitable boundary data, using
similar ideas as that of \cite{Gui}, where Morse index one solutions to the
Allen-Cahn equation are constructed.

To describe the boundary data, we need to know the asymptotic behavior of
$H_{\varepsilon}$ as $\varepsilon$ goes to zero.

\begin{lemma}
\label{h1}Let $\varepsilon>0$ be small. For any $x\in\left[  0,1+\varepsilon
\ln\varepsilon\right]  ,$ there holds%
\[
\left(  1-\varepsilon\right)  x\leq H_{\varepsilon}\left(  x\right)  \leq x.
\]

\end{lemma}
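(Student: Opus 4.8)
The plan is to analyze the heteroclinic ODE $H_\varepsilon'' = \tfrac12 F_\varepsilon'(H_\varepsilon)$ directly on the range where $H_\varepsilon(x)$ is small, i.e. $H_\varepsilon\in[0,\tfrac12]$, where the potential $F_\varepsilon$ has an explicit form. First I would record that in the region $s\in(-1,-\tfrac12)$ we have $\rho(s)=1$, so $F_\varepsilon(s)=\bar F((s+1)/\varepsilon)$, and for $(s+1)/\varepsilon\in[0,\tfrac12]$ this is $\bar F(t)=t^2$, hence $F_\varepsilon(s)=(s+1)^2/\varepsilon^2$ and $\tfrac12 F_\varepsilon'(s)=(s+1)/\varepsilon^2$ on the corresponding range. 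Writing $v=H_\varepsilon+1$ (so $v$ starts near $v=1$ at $x=0^-$ and we want the behavior near $x=0$), or more directly shifting to study $w := 1+H_\varepsilon$ on the interval where $H_\varepsilon\in(-1,-\tfrac12)$ and then matching, the ODE becomes linear: $w'' = w/\varepsilon^2$. The upshot is that near its zero, $H_\varepsilon$ is governed by a hyperbolic-sine type profile with rate $1/\varepsilon$, and one extracts the asymptotics $H_\varepsilon(x)\approx \varepsilon\sinh(x/\varepsilon)$ near $x=0$ — but one must be careful because the region where the linear ODE is valid is only $H_\varepsilon\le\tfrac12$ i.e. roughly $x\lesssim\varepsilon\ln(1/\varepsilon)$, which is exactly the interval $[0,1+\varepsilon\ln\varepsilon]$ appearing in the statement (note $1+\varepsilon\ln\varepsilon<1$). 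Hmm — so actually the claimed interval stretches beyond where $H_\varepsilon$ is small; I would therefore split into two subintervals.

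The cleaner route, which I would actually carry out: prove the two inequalities by a maximum-principle / sub- and super-solution argument for the ODE rather than by explicit solution. The upper bound $H_\varepsilon(x)\le x$: since $F_\varepsilon$ is even and monotone increasing on $[-1,0]$, we have $F_\varepsilon'(s)\le 0$ for $s\in[-1,0]$ wait, increasing means $F_\varepsilon'\ge0$; and $F_\varepsilon'(s)\ge0$ for $s\ge0$ by evenness-plus-monotonicity... let me restate: $F_\varepsilon'(s)$ has the sign of $s$. Hence for $x>0$ where $H_\varepsilon>0$ we get $H_\varepsilon''=\tfrac12 F_\varepsilon'(H_\varepsilon)\ge0$, so $H_\varepsilon$ is convex on $\{H_\varepsilon>0\}$, and since $H_\varepsilon(0)=0$ with $H_\varepsilon(+\infty)=1$ finite, convexity forces $H_\varepsilon'(0)\le$ ... actually convex and bounded above on $[0,\infty)$ forces $H_\varepsilon'\le0$ eventually, contradiction with $H_\varepsilon$ increasing unless — I should instead use the first integral. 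The energy identity $\tfrac12 (H_\varepsilon')^2 = \tfrac12 F_\varepsilon(H_\varepsilon) + C$ with $C$ fixed by $H_\varepsilon(\pm\infty)=\pm1$, $H_\varepsilon'(\pm\infty)=0$, and $F_\varepsilon(\pm1)\le1$: this gives $(H_\varepsilon')^2 = F_\varepsilon(H_\varepsilon) - F_\varepsilon(1) + (\text{const})$. Using $F_\varepsilon\le 1$ and $F_\varepsilon(1)=\bar F(2/\varepsilon)=1-e^{-2/\varepsilon}$, one deduces $(H_\varepsilon')^2 \le 1$, hence $H_\varepsilon'\le 1$ everywhere, and integrating from $0$ gives $H_\varepsilon(x)\le x$ — clean and valid for all $x\ge0$, not just the stated interval.

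For the lower bound $H_\varepsilon(x)\ge(1-\varepsilon)x$ on $[0,1+\varepsilon\ln\varepsilon]$, I would again use the first integral: $(H_\varepsilon')^2 = 1 - F_\varepsilon(1) + F_\varepsilon(H_\varepsilon) - \dots$; more precisely $(H_\varepsilon')^2 = c_\varepsilon + F_\varepsilon(H_\varepsilon)$ where $c_\varepsilon := 1 - F_\varepsilon(1) = e^{-2/\varepsilon}$ wait this needs the normalization $(H_\varepsilon')^2 - F_\varepsilon(H_\varepsilon) = \lim_{x\to\infty}[(H_\varepsilon')^2 - F_\varepsilon(H_\varepsilon)] = 0 - F_\varepsilon(1)$. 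So $(H_\varepsilon')^2 = F_\varepsilon(H_\varepsilon) - F_\varepsilon(1)$? That is negative near $H_\varepsilon=0$ since $F_\varepsilon(0)=0<F_\varepsilon(1)$ — so I have the sign of $C$ wrong, meaning $H_\varepsilon'$ does not vanish... no: the correct statement uses $F_\varepsilon(1)\le 1$ but we also should get $(H_\varepsilon')^2 = F_\varepsilon(H_\varepsilon) + (1-F_\varepsilon(1))$ only if the conserved quantity is $\tfrac12(H')^2 - \tfrac12 F_\varepsilon(H)$; at $+\infty$ this equals $-\tfrac12 F_\varepsilon(1)$, giving $(H_\varepsilon')^2 = F_\varepsilon(H_\varepsilon) - F_\varepsilon(1) \le 0$?? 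The resolution: I must have the ODE sign such that $H_\varepsilon$ is a heteroclinic for the potential $-F_\varepsilon$ (a double-well with wells at $\pm1$ since $F_\varepsilon(\pm1)=\max$), so the conserved quantity is $\tfrac12(H')^2 + \tfrac12(-F_\varepsilon(H)) = $ wait $H'' = \tfrac12 F_\varepsilon'(H)$ is the equation for potential $W=-\tfrac12 F_\varepsilon$, conserved energy $\tfrac12(H')^2 - (-\tfrac12 F_\varepsilon(H))$... I'll sort the sign bookkeeping carefully in the writeup; the point is that on the relevant small-$H_\varepsilon$ range $F_\varepsilon(H_\varepsilon)=(H_\varepsilon+1)^2/\varepsilon^2 - $ no wait near $H_\varepsilon=0$, $(H_\varepsilon+1)/\varepsilon\approx 1/\varepsilon$ which is in $(1,\infty)$ not $[0,\tfrac12]$! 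So near $H_\varepsilon=0$ we are in the regime $\bar F(t)=1-e^{-t}$ with $t=(H_\varepsilon+1)/\varepsilon$. Thus $F_\varepsilon(H_\varepsilon) = 1 - e^{-(H_\varepsilon+1)/\varepsilon}$ for $H_\varepsilon\in(-1,-\tfrac12)$, and by evenness near $H_\varepsilon$ close to $0$ we combine both branches with $\rho$.

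Here is the actual plan I would execute:

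\begin{proof}[Proof sketch]
The equation $H_\varepsilon'' = \tfrac12 F_\varepsilon'(H_\varepsilon)$ together with $H_\varepsilon(\pm\infty)=\pm 1$, $H_\varepsilon'(\pm\infty)=0$ gives the first integral
\[
(H_\varepsilon')^2 = F_\varepsilon(H_\varepsilon) - F_\varepsilon(1) + 1,
\]
after using $F_\varepsilon\le 1$ and matching at infinity; here $1 - F_\varepsilon(1) = e^{-2/\varepsilon}$ is exponentially small. Since $F_\varepsilon\le 1$ this yields $(H_\varepsilon')^2\le 1$, hence $0 < H_\varepsilon'\le 1$ and, integrating from $0$, the upper bound $H_\varepsilon(x)\le x$ for all $x\ge 0$.

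For the lower bound, note that on the set where $H_\varepsilon(x)\in[0,\tfrac12]$ one has, for $y$ near the relevant branch, $F_\varepsilon(H_\varepsilon) \ge 1 - e^{-(1-\,|H_\varepsilon|)/\varepsilon}$ by the definition of $F_\varepsilon$ and monotonicity of $\bar F$; combined with the first integral this gives
\[
(H_\varepsilon')^2 \ge 1 - e^{-(1-H_\varepsilon)/\varepsilon} - e^{-2/\varepsilon} \ge 1 - 2e^{-(1-H_\varepsilon)/\varepsilon}.
\]
Since $H_\varepsilon'>0$, we get $H_\varepsilon' \ge \bigl(1 - 2e^{-(1-H_\varepsilon)/\varepsilon}\bigr)^{1/2} \ge 1 - 2e^{-(1-H_\varepsilon)/\varepsilon}$. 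Using the already-proven bound $H_\varepsilon(x)\le x$, on the interval $x\le 1+\varepsilon\ln\varepsilon$ we have $1 - H_\varepsilon(x) \ge 1 - x \ge -\varepsilon\ln\varepsilon = \varepsilon\ln(1/\varepsilon)$, so $e^{-(1-H_\varepsilon(x))/\varepsilon} \le e^{-\ln(1/\varepsilon)} = \varepsilon$. Therefore $H_\varepsilon'(x) \ge 1 - 2\varepsilon$ on this interval; integrating from $0$ gives $H_\varepsilon(x)\ge (1-2\varepsilon)x$. A slightly more careful choice of constants in $\bar F$ (or simply replacing $\varepsilon$ by $\varepsilon/2$ in the bookkeeping, which is permissible for $\varepsilon$ small since only the asymptotic rate matters) upgrades this to $H_\varepsilon(x)\ge(1-\varepsilon)x$, completing the proof.
\end{proof}

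The main obstacle is the sign/branch bookkeeping for $F_\varepsilon$ near $H_\varepsilon = 0$: because $(H_\varepsilon+1)/\varepsilon$ is large there, one is in the $1 - e^{-s}$ regime of $\bar F$, and the cutoff $\rho$ mixes the two branches $\bar F((s+1)/\varepsilon)$ and $\bar F((-s+1)/\varepsilon)$; one must check that both contributions are $\ge 1 - e^{-(1-|s|)/\varepsilon}$ uniformly so that the first-integral lower bound goes through. Everything else is a routine ODE first-integral argument, and the key quantitative input is exactly the elementary observation that $x\le 1+\varepsilon\ln\varepsilon$ forces $e^{-(1-x)/\varepsilon}\le\varepsilon$.
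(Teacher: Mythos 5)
Your strategy is the same one the paper uses: pass to the first integral of the heteroclinic ODE, get the upper bound $H_\varepsilon(x)\le x$ from $(H_\varepsilon')^2\le 1$, and get the lower bound from the exponential smallness of $1-F_\varepsilon(s)$ on $[0,1+\varepsilon\ln\varepsilon]$, fed back through the already-proved upper bound. You also correctly identified the one technical point that must be checked — that both branches $\bar F((s+1)/\varepsilon)$ and $\bar F((1-s)/\varepsilon)$, weighted by $\rho$, give $1-F_\varepsilon(s)\le e^{-(1-s)/\varepsilon}$ for $s\ge 0$.

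There is, however, a concrete error in your first integral, and it comes from forgetting the cutoff $\rho$ when evaluating $F_\varepsilon$ at $s=1$. Since $\rho(1)=0$, the term $\bar F(2/\varepsilon)$ is killed, and
\[
F_\varepsilon(1)=\rho(1)\,\bar F\!\left(\tfrac{2}{\varepsilon}\right)+\bigl(1-\rho(1)\bigr)\bar F(0)=\bar F(0)=0,
\]
not $1-e^{-2/\varepsilon}$ as you wrote. Consequently the conserved quantity $(H_\varepsilon')^2-F_\varepsilon(H_\varepsilon)$ equals $0-F_\varepsilon(1)=0$, and the clean identity is simply
\[
(H_\varepsilon')^2=F_\varepsilon(H_\varepsilon),
\]
with no additive constant. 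Your formula $(H_\varepsilon')^2=F_\varepsilon(H_\varepsilon)-F_\varepsilon(1)+1$ has a spurious $+1$: with your (also wrong) value of $F_\varepsilon(1)$ it gives $(H_\varepsilon')^2=F_\varepsilon(H_\varepsilon)+e^{-2/\varepsilon}$, so even the claimed bound $(H_\varepsilon')^2\le 1$ does not follow from it, and later you flip the sign of the correction term to $-e^{-2/\varepsilon}$, which is inconsistent with the same formula.

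This error is what forces you into the $1-2\varepsilon$ ending and the unconvincing ``slightly more careful choice of constants in $\bar F$'' patch — there is nothing to adjust, $\bar F$ is fixed by the paper. With the correct identity $(H_\varepsilon')^2=F_\varepsilon(H_\varepsilon)$, your own estimate $F_\varepsilon(H_\varepsilon)\ge 1-e^{-(1-H_\varepsilon)/\varepsilon}\ge 1-\varepsilon$ on the interval (using $H_\varepsilon(x)\le x\le 1+\varepsilon\ln\varepsilon$, so $e^{-(1-H_\varepsilon)/\varepsilon}\le\varepsilon$) yields $H_\varepsilon'=\sqrt{F_\varepsilon(H_\varepsilon)}\ge\sqrt{1-\varepsilon}\ge 1-\varepsilon$, which integrates to exactly the stated lower bound. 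This is precisely the paper's argument; your proposal is morally identical, and fixing the evaluation of $F_\varepsilon(1)$ (and dropping the preliminary musings about linearizing the ODE, which are not used) recovers it.
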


\begin{proof}
$H_{\varepsilon}$ satisfies
\begin{equation}
H_{\varepsilon}^{\prime2}=F_{\varepsilon}\left(  H_{\varepsilon}\right)
\leq1. \label{s}%
\end{equation}
Hence $H_{\varepsilon}\left(  x\right)  \leq x,$ for $x>0.$ In particular, in
the interval $\left[  0,1+\varepsilon\ln\varepsilon\right]  ,$
\begin{equation}
H\left(  x\right)  \leq1+\varepsilon\ln\varepsilon. \label{s3}%
\end{equation}

By the definition of $F_{\varepsilon},$
\[
1-F_{\varepsilon}\left(  s\right)  =1-\rho\left(  s\right)  \bar{F}\left(
\frac{s+1}{\varepsilon}\right)  -\left(  1-\rho\left(  s\right)  \right)
\bar{F}\left(  \frac{-s+1}{\varepsilon}\right)  .
\]
If $0\leq s<1+\varepsilon\ln\varepsilon,$ then $\bar{F}\left(  \frac
{s+1}{\varepsilon}\right)  =1-e^{-\frac{s+1}{\varepsilon}},\bar{F}\left(
\frac{-s+1}{\varepsilon}\right)  =1-e^{-\frac{-s+1}{\varepsilon}}.$ Hence
\begin{align*}
1-F_{\varepsilon}\left(  s\right)   &  =1-\rho\left(  s\right)  \left(
1-e^{-\frac{s+1}{\varepsilon}}\right)  -\left(  1-\rho\left(  s\right)
\right)  \left(  1-e^{-\frac{-s+1}{\varepsilon}}\right) \\
&  =\rho\left(  s\right)  e^{-\frac{s+1}{\varepsilon}}+\left(  1-\rho\left(
s\right)  \right)  e^{-\frac{-s+1}{\varepsilon}}.
\end{align*}
Consequently,
\[
1-F_{\varepsilon}\left(  s\right)  \leq\varepsilon.
\]
This together with $\left(  \ref{s3}\right)  $ implies that in the interval
$\left[  0,1+\varepsilon\ln\varepsilon\right]  ,$
\[
H_{\varepsilon}^{\prime}=\sqrt{F_{\varepsilon}\left(  H_{\varepsilon}\right)
}\geq1-\varepsilon,
\]
provided that $\varepsilon$ is small. The conclusion of the lemma then follows
from $H_{\varepsilon}\left(  0\right)  =0.$
\end{proof}

We use $t_{\varepsilon}$ to denote the point where
\begin{equation}
H_{\varepsilon}\left(  t_{\varepsilon}\right)  =1-\frac{\varepsilon}{2}.
\label{tstar}%
\end{equation}
By $\left(  \ref{s}\right)  ,$ $H_{\varepsilon}^{\prime}\left(  t_{\varepsilon
}\right)  =\frac{1}{2}$ and $H_{\varepsilon}^{\prime\prime}\left(
t_{\varepsilon}\right)  =\frac{1}{2\varepsilon}.$ Additionally,
$t_{\varepsilon}\in\left[  1+\varepsilon\ln\varepsilon,1\right]  .$

\begin{lemma}
\label{h2}For $t\in\lbrack t_{\varepsilon},+\infty),$
\[
H_{\varepsilon}\left(  t\right)  =1-\frac{\varepsilon}{2}e^{\frac
{t_{\varepsilon}}{\varepsilon}}e^{-\frac{t}{\varepsilon}}.
\]

\end{lemma}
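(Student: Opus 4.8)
The plan is to observe that for $t \geq t_\varepsilon$ we have $H_\varepsilon(t) \geq 1-\tfrac{\varepsilon}{2} > \tfrac{1}{2}$, so by the structure of the cutoff $\rho$ (which vanishes for $s > \tfrac12$) the potential simplifies: for such values $\rho(H_\varepsilon) = 0$, hence $F_\varepsilon(s) = \bar F\!\left(\tfrac{-s+1}{\varepsilon}\right) = 1 - e^{-(1-s)/\varepsilon}$, using that $\tfrac{-s+1}{\varepsilon}$ lies in the range where $\bar F(\sigma) = 1-e^{-\sigma}$. Substituting this into the first-order relation $H_\varepsilon'^2 = F_\varepsilon(H_\varepsilon)$ from \eqref{s} would give, on $[t_\varepsilon,\infty)$,
\[
H_\varepsilon'(t) = \sqrt{1 - e^{-(1-H_\varepsilon(t))/\varepsilon}}.
\]
However, rather than solving this nonlinear ODE directly, the cleaner route is to set $w(t) := 1 - H_\varepsilon(t)$ and revisit the computation in the proof of Lemma \ref{h1}: there it was shown that $1 - F_\varepsilon(s) = e^{-(1-s)/\varepsilon}$ in exactly this regime (taking $\rho(s)=0$). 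I would instead work with the \emph{second-order} equation $H_\varepsilon'' = \tfrac12 F_\varepsilon'(H_\varepsilon)$.

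The key computational step: since on $[t_\varepsilon,\infty)$ we have $F_\varepsilon(s) = 1 - e^{-(1-s)/\varepsilon}$, differentiating gives $F_\varepsilon'(s) = -\tfrac{1}{\varepsilon} e^{-(1-s)/\varepsilon}$, so $H_\varepsilon'' = -\tfrac{1}{2\varepsilon} e^{-(1-H_\varepsilon)/\varepsilon}$. Now I claim the function $\varphi(t) := 1 - H_\varepsilon(t)$ satisfies a linear equation: indeed $1 - F_\varepsilon(H_\varepsilon) = e^{-\varphi/\varepsilon}$, and from $H_\varepsilon'^2 = 1 - e^{-\varphi/\varepsilon}$ together with the second-order equation one checks that the quantity $e^{\varphi/\varepsilon}$ — or more precisely $\varphi$ itself after the substitution — behaves exponentially. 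The slickest argument: differentiate $H_\varepsilon'^2 = 1 - e^{-\varphi/\varepsilon}$ to get $2H_\varepsilon' H_\varepsilon'' = -\tfrac{1}{\varepsilon}\varphi' e^{-\varphi/\varepsilon} = \tfrac{1}{\varepsilon} H_\varepsilon' e^{-\varphi/\varepsilon}$, hence $H_\varepsilon'' = \tfrac{1}{2\varepsilon} e^{-\varphi/\varepsilon}$ (consistent with the above up to the sign bookkeeping of $\varphi' = -H_\varepsilon'$), and then one deduces $(\log\varphi)' $ or an equivalent relation is constant. Concretely, I expect that $\psi(t) := \varepsilon \log\!\big(\tfrac{2}{\varepsilon}\varphi(t)\big) + t$ has zero derivative — this is a short calculation once one knows $H_\varepsilon'(t_\varepsilon) = \tfrac12$, so $H_\varepsilon'^2(t_\varepsilon) = \tfrac14$, giving $e^{-\varphi(t_\varepsilon)/\varepsilon} = \tfrac34$... which forces the consistency condition. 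I would instead pin down the constant purely from the two initial data at $t_\varepsilon$: $H_\varepsilon(t_\varepsilon) = 1 - \tfrac{\varepsilon}{2}$ (so $\varphi(t_\varepsilon) = \tfrac{\varepsilon}{2}$) and $H_\varepsilon'(t_\varepsilon) = \tfrac12$. Solving the ODE $\varphi' = -\sqrt{1 - e^{-\varphi/\varepsilon}}$ near $\varphi = \tfrac{\varepsilon}{2}$ and checking that the solution $\varphi(t) = \tfrac{\varepsilon}{2} e^{(t_\varepsilon - t)/\varepsilon}$ satisfies it (to leading order, or exactly if the claim in the lemma is meant asymptotically) gives the stated formula $H_\varepsilon(t) = 1 - \tfrac{\varepsilon}{2} e^{t_\varepsilon/\varepsilon} e^{-t/\varepsilon}$.

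The main obstacle I anticipate is a subtlety of exactness versus approximation: the ODE $\varphi' = -\sqrt{1 - e^{-\varphi/\varepsilon}}$ is genuinely nonlinear, and its exact solution is \emph{not} a pure exponential — so if the lemma asserts literal equality, then either (a) the intended reading is that this holds in the regime where $e^{-\varphi/\varepsilon}$ is small enough that $\sqrt{1 - e^{-\varphi/\varepsilon}} \approx 1$, i.e. for $t$ slightly beyond $t_\varepsilon$, or (b) there is a different normalization of $t_\varepsilon$ making it exact. I would therefore first check whether the authors intend $t \geq t_\varepsilon$ to already place us in the region where $H_\varepsilon' \equiv 1$ to the relevant order (note $1 - F_\varepsilon \leq \varepsilon$ from Lemma \ref{h1}'s proof, so $H_\varepsilon'$ is within $O(\varepsilon)$ of $1$), in which case integrating $H_\varepsilon'' = \tfrac{1}{2\varepsilon}(1 - H_\varepsilon')(1+H_\varepsilon') \approx \tfrac{1}{\varepsilon}(1 - H_\varepsilon')$ linearizes the problem and yields the exponential decay rate $1/\varepsilon$ cleanly, with the prefactor fixed by matching $H_\varepsilon(t_\varepsilon) = 1 - \tfrac{\varepsilon}{2}$. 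The bulk of the writeup will be this linearization-and-matching computation; the conceptual content is entirely in the observation that $\rho$ shuts off past $s = \tfrac12$ so the potential is explicit.
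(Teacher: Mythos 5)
Your proof breaks at the very first identification of the potential. You assert that for $s \geq 1-\tfrac{\varepsilon}{2}$ one has $F_\varepsilon(s) = \bar F\bigl(\tfrac{1-s}{\varepsilon}\bigr) = 1 - e^{-(1-s)/\varepsilon}$, invoking the branch $\bar F(\sigma) = 1-e^{-\sigma}$. But that branch is only valid for $\sigma > 1$. When $s \geq 1-\tfrac{\varepsilon}{2}$, the argument satisfies $\tfrac{1-s}{\varepsilon} \leq \tfrac{1}{2}$, placing you squarely in the \emph{other} branch, $\bar F(\sigma) = \sigma^2$ on $[0,\tfrac12]$. (You appear to have carried over the formula $1 - F_\varepsilon(s) = e^{-(1-s)/\varepsilon}$ from Lemma \ref{h1}, but the regime there is $0 \leq s < 1 + \varepsilon\ln\varepsilon$, where $\tfrac{1-s}{\varepsilon} > -\ln\varepsilon > 1$ — a different part of the range of $\bar F$.) The correct identity in the present regime is
\[
F_\varepsilon(s) = \frac{(1-s)^2}{\varepsilon^2}, \qquad s \in \Bigl[1-\tfrac{\varepsilon}{2},\,1\Bigr].
\]

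Because of this error you arrive at the genuinely nonlinear ODE $H_\varepsilon' = \sqrt{1 - e^{-(1-H_\varepsilon)/\varepsilon}}$, and then correctly observe that its solutions are not pure exponentials — which leads you to conclude the lemma must be only approximate or asymptotic. That conclusion is a symptom of the initial misidentification, not a defect of the lemma. With the correct quadratic form of $F_\varepsilon$, the first-order relation $H_\varepsilon'^2 = F_\varepsilon(H_\varepsilon)$ becomes the \emph{linear} ODE $H_\varepsilon' = \tfrac{1-H_\varepsilon}{\varepsilon}$ (positive root since $H_\varepsilon$ is increasing), which integrates exactly to $H_\varepsilon(t) = 1 - c_\varepsilon e^{-t/\varepsilon}$; matching $H_\varepsilon(t_\varepsilon) = 1 - \tfrac{\varepsilon}{2}$ fixes $c_\varepsilon = \tfrac{\varepsilon}{2}e^{t_\varepsilon/\varepsilon}$, giving equality, not an approximation. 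Your structural instincts (use the shutoff of $\rho$, use the first integral from \eqref{s}, match at $t_\varepsilon$) were sound; the one thing to fix is which piece of $\bar F$ is active for $s$ near $1$.
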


\begin{proof}
Let $t\geq t_{\varepsilon}.$ Since $H_{\varepsilon}$ is monotone increasing,
using Lemma \ref{h1}, we find that $H_{\varepsilon}\left(  t\right)
\geq1-\frac{\varepsilon}{2}.$ It follows that%
\[
F_{\varepsilon}\left(  H_{\varepsilon}\right)  =\frac{\left(  1-H_{\varepsilon
}\right)  ^{2}}{\varepsilon^{2}}.
\]
Hence by $\left(  \ref{s}\right)  ,$
\[
H_{\varepsilon}^{\prime}=\frac{1-H_{\varepsilon}}{\varepsilon}.
\]
Consequently, $H_{\varepsilon}\left(  t\right)  =1-c_{\varepsilon}e^{-\frac
{t}{\varepsilon}}.$ It then follows from $\left(  \ref{tstar}\right)  $ that
\[
1-c_{\varepsilon}e^{-\frac{t_{\varepsilon}}{\varepsilon}}=1-\frac{\varepsilon
}{2}.
\]
This yields $c_{\varepsilon}=\frac{\varepsilon}{2}e^{\frac{t_{\varepsilon}%
}{\varepsilon}}.$ The proof is completed.
\end{proof}

Lemma \ref{h1} and Lemma \ref{h2}, together with the fact that $\left\vert
H_{\varepsilon}^{\prime}\right\vert \leq1,$ imply that $H_{\varepsilon}\left(
s\right)  -s\rightarrow0$ in $C^{0,\alpha}\left(  \left[  -1,1\right]
\right)  .$

Let $l>2$ be a large constant and $\delta_{\varepsilon}=O\left(
\varepsilon^{\frac{4}{3}}\right)  $ be the constant satisfying
\begin{equation}
H_{\varepsilon}\left(  2\right)  +H_{\varepsilon}^{\prime}\left(  2\right)
\delta_{\varepsilon}+\frac{1}{2}H_{\varepsilon}^{\prime\prime}\left(
2\right)  \delta_{\varepsilon}^{2}+\frac{1-H_{\varepsilon}\left(  2\right)
}{\varepsilon^{4}}\delta_{\varepsilon}^{3}=1. \label{delta}%
\end{equation}
Note that for $\varepsilon$ small, we have
\[
H_{\varepsilon}^{\prime}\left(  2\right)  =\frac{1-H_{\varepsilon}\left(
2\right)  }{\varepsilon}=\varepsilon H_{\varepsilon}^{\prime\prime}\left(
2\right)  .
\]
We define a family of $C^{2}$ functions
\[
w_{\varepsilon,l}\left(  x\right)  :=\left\{
\begin{array}
[c]{l}%
H_{\varepsilon}\left(  x\right)  ,x\in\left[  -l,2\right]  ,\\
H_{\varepsilon}\left(  2\right)  +H_{\varepsilon}^{\prime}\left(  2\right)
\left(  x-2\right)  +\frac{1}{2}H_{\varepsilon}^{\prime\prime}\left(
2\right)  \left(  x-2\right)  ^{2}+\frac{1-H_{\varepsilon}\left(  2\right)
}{\varepsilon^{4}}\left(  x-2\right)  ^{3},x\in\left[  2,2+\delta
_{\varepsilon}\right]  ,\\
-H_{\varepsilon}\left(  l\right)  +H_{\varepsilon}^{\prime}\left(  l\right)
\left(  x+l\right)  -\frac{1}{2}H_{\varepsilon}^{\prime\prime}\left(
l\right)  \left(  x+l\right)  ^{2},x\in\left[  -l-\varepsilon,-l\right]  .
\end{array}
\right.
\]
Observe that for $\varepsilon$ small enough, $H_{\varepsilon}^{\prime}\left(
l\right)  =-\varepsilon H_{\varepsilon}^{\prime\prime}\left(  l\right)  .$
Hence $w_{\varepsilon,l}^{\prime}\left(  -l-\varepsilon\right)  =0.$ Moreover,
we have $w_{\varepsilon,l}^{\prime}\left(  x\right)  \geq0.$

\begin{lemma}
\label{sub}$w_{\varepsilon,l}$ is a subsolution:%
\[
-w_{\varepsilon,l}^{\prime\prime}+\frac{1}{2}F_{\varepsilon}^{\prime}\left(
w_{\varepsilon,l}\right)  \leq0\text{, for }x\in\left[  -l-\varepsilon
,2+\delta_{\varepsilon}\right]  .
\]

\end{lemma}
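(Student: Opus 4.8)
The plan is to verify the subsolution inequality piecewise on the three intervals that define $w_{\varepsilon,l}$, since on each of them $w_{\varepsilon,l}$ is an explicit function. On the main piece $x\in[-l,2]$ we have $w_{\varepsilon,l}=H_{\varepsilon}$ exactly, so $-w_{\varepsilon,l}''+\tfrac12F_{\varepsilon}'(w_{\varepsilon,l})=0$ by the defining ODE for the heteroclinic, and the inequality holds with equality. So the real work is on the two "cap" intervals $[2,2+\delta_{\varepsilon}]$ and $[-l-\varepsilon,-l]$, where $w_{\varepsilon,l}$ is a low-degree polynomial Taylor-type modification designed to reach the value $1$ with zero derivative (so that it can be glued to the constant $1$, which trivially solves the equation for $x$ beyond the caps since $F_{\varepsilon}'(\pm1)$ appears with the right sign). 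The strategy is: on each cap interval, estimate $w_{\varepsilon,l}''$ from the explicit polynomial, estimate $F_{\varepsilon}'(w_{\varepsilon,l})$ using that $w_{\varepsilon,l}$ is very close to $1$ there (so we are in the regime where $\bar F(s)=1-e^{-s}$, hence $F_{\varepsilon}(s)=\tfrac{(1-s)^2}{\varepsilon^2}$ near $s=1$ and $F_{\varepsilon}'(s)=-\tfrac{2(1-s)}{\varepsilon^2}$), and show the second-derivative term dominates with the correct sign.

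More concretely, on $[2,2+\delta_{\varepsilon}]$ write $x=2+t$ with $t\in[0,\delta_{\varepsilon}]$, $\delta_{\varepsilon}=O(\varepsilon^{4/3})$. From Lemma \ref{h2} we have $1-H_{\varepsilon}(2)=\tfrac{\varepsilon}{2}e^{t_{\varepsilon}/\varepsilon}e^{-2/\varepsilon}$, which is exponentially small in $\varepsilon$, and $H_{\varepsilon}'(2)=\tfrac{1-H_{\varepsilon}(2)}{\varepsilon}$, $H_{\varepsilon}''(2)=\tfrac{1-H_{\varepsilon}(2)}{\varepsilon^2}$, all exponentially small. Then $w_{\varepsilon,l}''(2+t)=H_{\varepsilon}''(2)+\tfrac{6(1-H_{\varepsilon}(2))}{\varepsilon^4}t$; the leading behavior of the cubic coefficient term is the dominant contribution once $t$ is comparable to $\delta_{\varepsilon}\sim\varepsilon^{4/3}$, giving $w_{\varepsilon,l}''\ge 0$ and in fact of order $\tfrac{(1-H_{\varepsilon}(2))}{\varepsilon^4}\delta_{\varepsilon}$. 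On the other hand $1-w_{\varepsilon,l}(2+t)\le 1-H_{\varepsilon}(2)$ (monotonicity and the endpoint condition \eqref{delta}), so $F_{\varepsilon}'(w_{\varepsilon,l})=-\tfrac{2(1-w_{\varepsilon,l})}{\varepsilon^2}\le 0$; hence $-w_{\varepsilon,l}''+\tfrac12F_{\varepsilon}'(w_{\varepsilon,l})\le -w_{\varepsilon,l}''\le 0$, which is exactly what we want. One must check that $w_{\varepsilon,l}$ actually stays in $[H_{\varepsilon}(2),1]$ on the whole cap so that we really are in the regime where $F_{\varepsilon}(s)=\tfrac{(1-s)^2}{\varepsilon^2}$; this follows because $w_{\varepsilon,l}'\ge 0$ there (already noted in the text) together with the endpoint values $H_{\varepsilon}(2)$ and $1$.

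The interval $[-l-\varepsilon,-l]$ is handled the same way but now near $s=-1$. Setting $x=-l-s$ with $s\in[0,\varepsilon]$, the quadratic definition gives $w_{\varepsilon,l}''=-H_{\varepsilon}''(l)<0$ (since $H_{\varepsilon}$ is concave/convex appropriately near $-1$), while $w_{\varepsilon,l}$ is slightly below $-1$... — more precisely one checks $w_{\varepsilon,l}\in[-1,H_{\varepsilon}(-l)]$ using $w_{\varepsilon,l}'\ge0$ and $H_{\varepsilon}'(l)=-\varepsilon H_{\varepsilon}''(l)$ which forces $w_{\varepsilon,l}'(-l-\varepsilon)=0$ and $w_{\varepsilon,l}(-l-\varepsilon)=-1$ by direct computation; then $F_{\varepsilon}'(w_{\varepsilon,l})=\tfrac{2(1+w_{\varepsilon,l})}{\varepsilon^2}\le 0$ there, and again $-w_{\varepsilon,l}''+\tfrac12F_{\varepsilon}'(w_{\varepsilon,l})\le H_{\varepsilon}''(l)+(\text{nonpositive})$. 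Here $H_{\varepsilon}''(l)<0$ because near $-1$ we have $H_{\varepsilon}(x)=-1+\tfrac{\varepsilon}{2}e^{\cdots}e^{x/\varepsilon}$ type behavior (the mirror of Lemma \ref{h2}), so $H_{\varepsilon}''<0$, making the whole expression negative. I expect the main obstacle to be bookkeeping the exponentially small quantities $1-H_{\varepsilon}(2)$, $H_{\varepsilon}'(2)$, $H_{\varepsilon}''(2)$ against the algebraic powers of $\varepsilon$ and $\delta_{\varepsilon}$ carefully enough to be sure the sign of $w_{\varepsilon,l}''$ is controlled across the entire cap $[2,2+\delta_{\varepsilon}]$ — in particular near $t=0$ where the cubic term vanishes and one is comparing $H_{\varepsilon}''(2)$ (positive, exponentially small) against nothing, so one needs $H_{\varepsilon}''(2)\ge 0$, which is true, and then the cubic only helps. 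The matching conditions at the gluing points $x=2$ and $x=-l$ are $C^2$ by construction (Taylor expansion), and the value $1$ is attained at $x=2+\delta_{\varepsilon}$ precisely by the choice \eqref{delta} of $\delta_{\varepsilon}$, and the value $-1$ with zero slope at $x=-l-\varepsilon$ by the identity $H_{\varepsilon}'(l)=-\varepsilon H_{\varepsilon}''(l)$; beyond these points $w_{\varepsilon,l}$ is extended by the constants $\pm1$, for which the inequality is an equality, so the subsolution property holds on all of $[-l-\varepsilon,2+\delta_{\varepsilon}]$.
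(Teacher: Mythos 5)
Your decomposition into the three intervals, with the observation that the inequality holds with equality on $[-l,2]$ because $w_{\varepsilon,l}=H_{\varepsilon}$ there, is correct and is exactly how the paper proceeds. But the argument you give on the two cap intervals has a genuine gap, rooted in a sign error, and the crude bound you try to use cannot be fixed without bringing in the ODE cancellation that the paper exploits.

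\smallskip

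\textbf{Sign of $H_{\varepsilon}''$.} Near $s=1$ we have $F_{\varepsilon}(s)=\varepsilon^{-2}(1-s)^{2}$, so $F_{\varepsilon}'(s)=2\varepsilon^{-2}(s-1)<0$ for $s<1$. Consequently $H_{\varepsilon}''(t)=\tfrac12F_{\varepsilon}'(H_{\varepsilon}(t))=\varepsilon^{-2}(H_{\varepsilon}(t)-1)<0$ on the whole tail; in particular $H_{\varepsilon}''(2)<0$, not $\geq0$ as you claim. (You may have been misled by the paper's own typos, e.g.\ the displayed ``$H_{\varepsilon}''(t_{\varepsilon})=\frac{1}{2\varepsilon}$'' and ``$H_{\varepsilon}'(2)=\varepsilon H_{\varepsilon}''(2)$'' in Section~\ref{Sec2}, which are missing minus signs; the paper's \emph{proof} of Lemma~\ref{sub} nevertheless uses the correct $H_{\varepsilon}''(l)=\varepsilon^{-2}(H_{\varepsilon}(l)-1)$.)

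\smallskip

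\textbf{Why the crude bound fails on $[2,2+\delta_{\varepsilon}]$.} With $t=x-2$ and $a=\frac{1-H_{\varepsilon}(2)}{\varepsilon^{4}}$, we have
$-w_{\varepsilon,l}''=-H_{\varepsilon}''(2)-6at=\frac{1-H_{\varepsilon}(2)}{\varepsilon^{2}}\left(1-\frac{6t}{\varepsilon^{2}}\right)$,
which is strictly positive for $0<t<\varepsilon^{2}/6$. So for $t$ in this subinterval your chain $-w_{\varepsilon,l}''+\tfrac12F_{\varepsilon}'(w_{\varepsilon,l})\le -w_{\varepsilon,l}''\le 0$ proves nothing: the right-most bound is positive. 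The needed cancellation is that the constant terms $-H_{\varepsilon}''(2)$ (from $-w'')$ and $\varepsilon^{-2}(H_{\varepsilon}(2)-1)$ (from $\tfrac12 F_{\varepsilon}'$) are exactly equal and opposite by the ODE; once these are cancelled, the remaining quantity factors as $(x-2)\big[-6a+\varepsilon^{-2}\big(H_{\varepsilon}'(2)+\tfrac12H_{\varepsilon}''(2)(x-2)+a(x-2)^{2}\big)\big]$, and the bracket is $\approx-6a<0$ uniformly on the cap because $\delta_{\varepsilon}=O(\varepsilon^{4/3})$. That is the paper's argument, and it cannot be replaced by bounding $-w''$ and $F_{\varepsilon}'(w)$ separately.

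\smallskip

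\textbf{The lower cap.} Here the argument is self-contradictory: you write both $w_{\varepsilon,l}''=-H_{\varepsilon}''(l)<0$ and later $H_{\varepsilon}''(l)<0$, which cannot both hold. Moreover, on $[-l-\varepsilon,-l]$ one has $w_{\varepsilon,l}>-1$ (indeed $w_{\varepsilon,l}(-l-\varepsilon)=-1+\tfrac12(1-H_{\varepsilon}(l))>-1$, not $-1$ as you assert), so $F_{\varepsilon}'(w_{\varepsilon,l})=\tfrac{2(1+w_{\varepsilon,l})}{\varepsilon^{2}}>0$, the opposite sign of what you use. The correct route again uses the ODE to cancel $H_{\varepsilon}''(l)+\varepsilon^{-2}(1-H_{\varepsilon}(l))=0$, after which what remains is $\varepsilon^{-2}H_{\varepsilon}'(l)(x+l)\big(1+\tfrac{x+l}{2\varepsilon}\big)$, and this is $\le0$ because $x+l\in[-\varepsilon,0]$ while the other two factors are positive.

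\smallskip

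In short: the piecewise plan is right, but you must exploit the exact vanishing of $-w''+\tfrac12F_{\varepsilon}'(w)$ at the gluing points $x=2$ and $x=-l$ (a consequence of the ODE) and factor out $(x-2)$ respectively $(x+l)$; bounding the two terms separately by sign is not tight enough, and with the correct sign of $H_{\varepsilon}''$ it in fact goes the wrong way near the gluing points.
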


\begin{proof}
We first prove this in the interval $\left[  2,2+\delta_{\varepsilon}\right]
.$ For $s\in\left[  1-\frac{\varepsilon}{2},1\right]  ,$
\[
F_{\varepsilon}\left(  s\right)  =\varepsilon^{-2}\left(  1-s\right)
^{2},\text{ }F_{\varepsilon}^{\prime}\left(  s\right)  =2\varepsilon
^{-2}\left(  s-1\right)  .
\]
It follows that
\begin{align*}
\frac{1}{2}F_{\varepsilon}^{\prime}\left(  w_{\varepsilon,l}\left(  x\right)
\right)   &  =\varepsilon^{-2}\left(  w_{\varepsilon,l}\left(  x\right)
-1\right) \\
&  =\varepsilon^{-2}\left[  H_{\varepsilon}\left(  2\right)  -1+H_{\varepsilon
}^{\prime}\left(  2\right)  \left(  x-2\right)  +\frac{1}{2}H_{\varepsilon
}^{\prime\prime}\left(  2\right)  \left(  x-2\right)  ^{2}+a\left(
x-2\right)  ^{3}\right]  .
\end{align*}
On the other hand, we compute%
\[
w_{\varepsilon,l}^{\prime\prime}\left(  x\right)  =H_{\varepsilon}%
^{\prime\prime}\left(  2\right)  +\frac{1-H_{\varepsilon}\left(  2\right)
}{\varepsilon^{4}}6\left(  x-2\right)  .
\]
Then using the fact that $\delta_{\varepsilon}=O\left(  \varepsilon^{\frac
{4}{3}}\right)  ,$ we find that for $x\in\left[  2,2+\delta_{\varepsilon
}\right]  ,$
\begin{align*}
&  -w_{\varepsilon,l}^{\prime\prime}+\frac{1}{2}F_{\varepsilon}^{\prime
}\left(  w_{\varepsilon,l}\right) \\
&  =-H_{\varepsilon}^{\prime\prime}\left(  2\right)  -\frac{1-H_{\varepsilon
}\left(  2\right)  }{\varepsilon^{4}}6\left(  x-2\right) \\
&  +\varepsilon^{-2}\left[  H_{\varepsilon}\left(  2\right)  -1+H_{\varepsilon
}^{\prime}\left(  2\right)  \left(  x-2\right)  +\frac{1}{2}H_{\varepsilon
}^{\prime\prime}\left(  2\right)  \left(  x-2\right)  ^{2}+\frac
{1-H_{\varepsilon}\left(  2\right)  }{\varepsilon^{4}}\left(  x-2\right)
^{3}\right] \\
&  =\left(  x-2\right)  \left[  -6\frac{1-H_{\varepsilon}\left(  2\right)
}{\varepsilon^{4}}+\varepsilon^{-2}\left(  H_{\varepsilon}^{\prime}\left(
2\right)  +\frac{1}{2}H_{\varepsilon}^{\prime\prime}\left(  2\right)  \left(
x-2\right)  +\frac{1-H_{\varepsilon}\left(  2\right)  }{\varepsilon^{4}%
}\left(  x-2\right)  ^{2}\right)  \right] \\
&  \leq0,
\end{align*}
provided that $\varepsilon$ is small enough.

Next we consider the case of $x\in\left[  -l-\varepsilon,-l\right]  .$ In this
case, we have
\begin{align*}
\frac{1}{2}F_{\varepsilon}^{\prime}\left(  w_{\varepsilon,l}\left(  x\right)
\right)   &  =\varepsilon^{-2}\left(  w_{\varepsilon,l}\left(  x\right)
+1\right) \\
&  =\varepsilon^{-2}\left[  1-H_{\varepsilon}\left(  l\right)  +H_{\varepsilon
}^{\prime}\left(  l\right)  \left(  x+l\right)  -\frac{1}{2}H_{\varepsilon
}^{\prime\prime}\left(  l\right)  \left(  x+l\right)  ^{2}\right]  .
\end{align*}
Moreover, $w_{\varepsilon,l}^{\prime\prime}\left(  x\right)  =-H_{\varepsilon
}^{\prime\prime}\left(  l\right)  .$ Then using the fact that $H^{\prime
\prime}\left(  l\right)  =\varepsilon^{-2}\left(  H_{\varepsilon}\left(
l\right)  -1\right)  ,$ we get
\begin{align*}
-w_{\varepsilon,l}^{\prime\prime}\left(  x\right)  +\frac{1}{2}F_{\varepsilon
}^{\prime}\left(  w_{\varepsilon,l}\left(  x\right)  \right)   &
=\varepsilon^{-2}\left[  H_{\varepsilon}^{\prime}\left(  l\right)  \left(
x+l\right)  -\frac{1}{2}H_{\varepsilon}^{\prime\prime}\left(  l\right)
\left(  x+l\right)  ^{2}\right] \\
&  =\varepsilon^{-2}H_{\varepsilon}^{\prime}\left(  l\right)  \left(
x+l\right)  \left(  1+\frac{x+l}{2\varepsilon}\right)  \leq0.
\end{align*}
The proof is finished.
\end{proof}

As we mentioned before, from Section \ref{Sec2} to Section \ref{Sec5}, we will
deal with the case of dimension $n=3.$ Recall that in the coordinate $\left(
r,z\right)  ,$ where $r=\sqrt{x_{1}^{2}+x_{2}^{2}},$ the catenoids are given
by $\epsilon r=\cosh\left(  \epsilon z\right)  ,$ with $\epsilon>0$ being a
parameter. They are classical minimal surfaces, and can also be described by
$\epsilon z=\operatorname{arccosh}\left(  \epsilon r\right)  .$

Let $k>0$ be a parameter. For each $a$ large, let
\[
\Omega_{a}:=\left\{  \left(  r,z\right)  :r\in\left[  0,a\right]  ,z\in\left[
0,b_{\varepsilon}\right]  \right\}  ,
\]
where%
\begin{equation}
b_{\varepsilon}=k\operatorname{arccosh}\left(  k^{-1}a\right)  +2+\delta
_{\varepsilon}. \label{b}%
\end{equation}
Set $L_{a}:=L_{1,a}\cup L_{2,a},$ where
\[
L_{1,a}:=\left\{  \left(  a,z\right)  :z\in\left[  0,b_{\varepsilon}\right]
\right\}  , \quad\mbox{and} \quad L_{2,a}:=\left\{  \left(  r,b_{\varepsilon
}\right)  :r\in\left[  0,a\right]  \right\}  .
\]
For fixed $k,$ we then define a function $\omega=\omega\left(  r,z\right)  ,$
depending on the parameter $\varepsilon$ and $a,$ to be
\[
\omega\left(  r,z\right)  =w_{\varepsilon,k\operatorname{arccosh}\left(
k^{-1}a\right)  -\varepsilon}\left(  z-k\operatorname{arccosh}\left(
k^{-1}a\right)  \right)  .
\]

Although eventually we are interested in solutions of the free boundary
problem in the whole space $\mathbb{R}^{3},$ it will be crucial to study
solutions $u=u\left(  r,z\right)  $ of the following regularized problem in
the bounded cylindrical domain $\Omega_{a},$ with mixed boundary condition:
\begin{equation}
\left\{
\begin{array}
[c]{l}%
-\partial_{r}^{2}u-\frac{1}{r}\partial_{r}u-\partial_{z}^{2}u+\frac{1}%
{2}F_{\varepsilon}^{\prime}\left(  u\right)  =0\text{ in }\Omega_{a},\\
\partial_{r}u\left(  0,z\right)  =0,\partial_{z}u\left(  r,0\right)  =0,\\
u=\omega\text{, on }L_{a}.
\end{array}
\right.  \label{w}%
\end{equation}

\subsection{Solutions of the regularized problems in $\Omega_{a}$ with
relatively small energy}

For each $a$ large, we would like to construct a mountain pass type solution
for the regularized problem $\left(  \ref{w}\right)  .$ We will first of all
look for two solutions $u_{1},u_{2}$ with relatively small energy. Minimaxing
in suitable class of paths of functions connecting $u_{1}$ and $u_{2}$, we
then obtain a mountain pass solution. Intuitively, $u_{1}$ will have nodal set
almost parallel to the horizontal $x_{1}$-$x_{2}$ plane, while the nodal set
of $u_{2}$ will be close to a vertical cylinder. Similar construction has been
carried out in \cite{Gui} for the Allen-Cahn equation in the two dimensional case.

\subsubsection{A solution with almost horizontal nodal set}

For fixed $\varepsilon,a,$ consider the following initial value problem for
the function $u=u\left(  t;r,z\right)  :$
\begin{equation}
\left\{
\begin{array}
[c]{l}%
\partial_{t}u=\Delta u-\frac{1}{2}F_{\varepsilon}^{\prime}\left(  u\right)
\text{ in }\Omega_{a}\times\left(  0,T\right)  ,\\
\partial_{r}u\left(  t;0,z\right)  =0,\partial_{z}u\left(  t;r,0\right)  =0,\\
u|_{L_{a}}=\omega,\\
u\left(  0;r,z\right)  =\omega\left(  r,z\right)  .
\end{array}
\right.  \label{Pa}%
\end{equation}
Since the constant function $\pm1$ solves the equation
\[
\partial_{t}u=\Delta u-\frac{1}{2}F_{\varepsilon}^{\prime}\left(
u_{\varepsilon}\right)  ,
\]
we infer that the solution $u$ of $\left(  \ref{Pa}\right)  $ satisfying
$-1<u<1.$ Hence the $L^{\infty}$ norm of the solution does not blow up in
finite time and by parabolic regularity, the solution can be extended to the
whole time interval $\left(  0,+\infty\right)  .$

Let us set
\[
E\left(  u\right)  :=\int_{\Omega_{a}}\left[  \left\vert \nabla u\right\vert
^{2}+F_{\varepsilon}\left(  u\right)  \right]  \geq0.
\]

\begin{lemma}
There exists a sequence $t_{n}\rightarrow+\infty,$ such that $u\left(
t_{n};\cdot\right)  $ converges to a solution $u_{1}$ of the problem
\begin{equation}
\left\{
\begin{array}
[c]{l}%
\Delta u-\frac{1}{2}F_{\varepsilon}^{\prime}\left(  u\right)  =0,\\
\partial_{r}u\left(  t;0,z\right)  =0,\partial_{z}u\left(  t;r,0\right)  =0,\\
u|_{L_{a}}=\omega.
\end{array}
\right.  \label{bou}%
\end{equation}

\end{lemma}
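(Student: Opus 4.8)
The plan is to obtain $u_1$ as the long-time limit of the parabolic flow $\left(\ref{Pa}\right)$ by a standard gradient-flow argument. First I would note that $\left(\ref{Pa}\right)$ is the $L^2$-gradient flow of the energy $E$ restricted to the affine space of functions agreeing with $\omega$ on $L_a$ and satisfying the Neumann conditions on the remaining boundary pieces $\{r=0\}$ and $\{z=0\}$; differentiating along the flow gives the dissipation identity
\[
\frac{d}{dt}E\left(u(t;\cdot)\right)=-2\int_{\Omega_a}\left(\partial_t u\right)^2\,dr\,dz\leq 0.
\]
Since $E\geq 0$, this shows $E\left(u(t;\cdot)\right)$ is nonincreasing and bounded below, hence converges, and integrating in time yields $\int_0^\infty\!\!\int_{\Omega_a}\left(\partial_t u\right)^2\,dr\,dz\,dt<\infty$. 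Consequently there is a sequence $t_n\to+\infty$ along which $\int_{\Omega_a}\left(\partial_t u(t_n;\cdot)\right)^2\to 0$.

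Next I would upgrade this weak decay to convergence of the functions themselves. Because $-1<u<1$ uniformly, the right-hand side $\frac12 F_\varepsilon'(u)$ and the initial/boundary data are all bounded independently of time, so parabolic Schauder estimates (together with the compatibility of $\omega$ with the Neumann conditions, and the smoothness of $\partial\Omega_a$ away from corners — corners are handled by the standard reflection across $\{r=0\}$ and $\{z=0\}$, or by working with weak solutions up to the corner) give uniform $C^{2,\alpha}$ bounds on $u(t;\cdot)$ for $t\geq 1$. By Arzelà–Ascoli, passing to a further subsequence, $u(t_n;\cdot)\to u_1$ in $C^2_{\mathrm{loc}}(\overline{\Omega_a})$ for some function $u_1$ with $-1\leq u_1\leq 1$, $u_1=\omega$ on $L_a$, and $\partial_r u_1(0,z)=\partial_z u_1(r,0)=0$. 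Passing to the limit in the equation $\partial_t u=\Delta u-\frac12 F_\varepsilon'(u)$ and using that $\partial_t u(t_n;\cdot)\to 0$ in $L^2$ (hence, after the elliptic estimate bootstraps, in $C^0$) forces $\Delta u_1-\frac12 F_\varepsilon'(u_1)=0$. Thus $u_1$ solves $\left(\ref{bou}\right)$.

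The main obstacle I anticipate is purely technical: the domain $\Omega_a$ is a rectangle, so it has corners where the Dirichlet part $L_a$ meets the Neumann part, and at the corner $(a,0)$ and $(0,b_\varepsilon)$ the boundary conditions change type. One must check that the parabolic problem $\left(\ref{Pa}\right)$ is well posed and that interior-in-time regularity up to the boundary holds despite these corners. This is resolved by observing that the Neumann conditions on $\{r=0\}$ and $\{z=0\}$ are compatible with even reflection: reflecting across $\{z=0\}$ and treating $\frac1r\partial_r$ as the radial Laplacian in $\mathbb{R}^3$ (with the axis $\{r=0\}$ a removable set for axially symmetric functions) turns the mixed corner at $(0,b_\varepsilon)$ into a smooth Dirichlet boundary point, and the genuine corner at $(a,0)$ — where a Dirichlet edge meets a Neumann edge at a right angle — still admits enough regularity ($C^{1,\alpha}$ up to the corner, which is all that is needed to pass to the limit and to make sense of $u_1=\omega$ on $L_a$ pointwise). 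An alternative, if one prefers to avoid corner regularity altogether, is to argue variationally: $E\left(u(t_n;\cdot)\right)$ is bounded, so $u(t_n;\cdot)$ is bounded in $H^1(\Omega_a)$, a subsequence converges weakly in $H^1$ and strongly in $L^2$ to some $u_1$ in the affine space, and the dissipation bound $\|\partial_t u(t_n;\cdot)\|_{L^2}\to 0$ identifies $u_1$ as a weak solution of $\left(\ref{bou}\right)$; elliptic regularity then promotes $u_1$ to a classical solution in the interior and up to the smooth boundary portions. Either route completes the proof.
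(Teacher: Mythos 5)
Your proposal is correct, and the first half of it (the dissipation identity $\frac{d}{dt}E(u(t))=-2\int_{\Omega_a}|\partial_t u|^2\le 0$, the finiteness of $\int_0^\infty\!\int_{\Omega_a}|\partial_t u|^2$, and the extraction of $t_n\to+\infty$ with $\|\partial_t u(t_n;\cdot)\|_{L^2}\to 0$) coincides exactly with the paper's argument. The two arguments diverge only at the final compactness step. The paper observes that $\|\partial_t u(t_n;\cdot)\|_{L^2}\to 0$ is precisely the statement $dE[u(t_n;\cdot)]\to 0$, so $\{u(t_n;\cdot)\}$ is a Palais--Smale sequence for $E$; it then invokes the P.S.~condition for $E$ in the natural $H^1$-type space over the bounded domain $\Omega_a$ and concludes by standard variational machinery. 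That is essentially your second, \emph{variational} route: bounded energy gives an $H^1$ bound, weak compactness yields a limit $u_1$ in the affine space, the dissipation bound identifies $u_1$ as a weak critical point, and elliptic regularity upgrades it to a classical solution. Your first route, via uniform parabolic Schauder estimates and Arzel\`a--Ascoli, is a genuinely different way to close: it is more hands-on and produces locally $C^2$ convergence rather than $H^1$ convergence, but, as you correctly flag, it forces you to address regularity at the corners of $\Omega_a$ where the boundary condition changes type. The paper's P.S.~formulation sidesteps this entirely, since compactness is obtained in the energy space without needing pointwise second-derivative control up to the corners; the regularity of $u_1$ then only needs to be established for the limiting \emph{elliptic} problem, which is simpler. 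Both closings are valid; the variational one is shorter and is what the paper uses.
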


\begin{proof}
Direct computation yields%
\[
E^{\prime}\left(  u\left(  t\right)  \right)  =-2\int_{\Omega_{a}}\left\vert
\partial_{t}u\right\vert ^{2}\leq0.
\]
Hence $E\left(  u\left(  t\right)  \right)  $ is decreasing and uniformly
bounded. It also follows that
\[
\int_{0}^{+\infty}\int_{\Omega_{a}}\left\vert \partial_{t}u\right\vert
^{2}<+\infty.
\]
Hence there exists a sequence $t_{n}\rightarrow+\infty$ such that
\[
\int_{\Omega_{a}}\left\vert \partial_{t}u\left(  t_{n};\cdot\right)
\right\vert ^{2}\rightarrow0.
\]
We then get a P.S. sequence(in the natural functional space $H^{0,1}$, see
\cite{Gui} for related discussion) $\left\{  u\left(  t_{n};\cdot\right)
\right\}  $ for the functional $E$(i.e., $E\left(  u\left(  t_{n}%
;\cdot\right)  \right)  \leq C,$ $dE\left[  u\left(  t_{n};\cdot\right)
\right]  \rightarrow0$). Since $E$ satisfies the P.S. condition, using
standard variational arguments, we may extract a subsequence converging to a
solution $u_{1}$ of $\left(  \ref{bou}\right)  .$
\end{proof}

\begin{lemma}
$u_{1}$ is monotone in the following sense:
\[
\partial_{z}u_{1}>0\text{ and }\partial_{r}u_{1}<0,\text{ in }\Omega_{a}.
\]

\end{lemma}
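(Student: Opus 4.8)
The plan is to establish the monotonicity of $u_1$ through the parabolic flow $\left(\ref{Pa}\right)$ rather than directly on the elliptic solution, since the comparison principle for parabolic equations is more robust. First I would recall that $u_1$ arises as a limit $u(t_n;\cdot)\to u_1$ of the solution to the initial value problem $\left(\ref{Pa}\right)$, so it suffices to prove that the flow $u(t;\cdot)$ itself satisfies $\partial_z u>0$ and $\partial_r u<0$ for all $t>0$, and then pass to the limit; the limiting inequalities are at first non-strict, but the strong maximum principle (applied to the linearized equations below) upgrades them. The key structural input is the choice of initial and boundary data: $\omega(r,z)=w_{\varepsilon,l}(z-k\operatorname{arccosh}(k^{-1}a))$ is a function of $z$ alone with $w_{\varepsilon,l}'\geq 0$, so $\partial_z\omega\geq 0$ and $\partial_r\omega=0$ identically at time $t=0$ and on $L_a$.

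For the $z$-monotonicity, I would differentiate the equation in $\left(\ref{Pa}\right)$ with respect to $z$ and set $v:=\partial_z u$. Since the Laplacian in cylindrical coordinates, $\partial_r^2+\frac1r\partial_r+\partial_z^2$, has no explicit $z$-dependence, $v$ solves the linear parabolic equation $\partial_t v=\Delta v-\tfrac12 F_\varepsilon''(u)\,v$ in $\Omega_a\times(0,\infty)$. On the lateral boundary $\{r=0\}$ we have $\partial_r v=\partial_z(\partial_r u)=0$ (Neumann), on $\{z=0\}$ the condition $\partial_z u=0$ gives $v=0$ (Dirichlet), on $L_a$ we have $v=\partial_z\omega\geq 0$, and at $t=0$, $v=\partial_z\omega\geq 0$. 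On the remaining portion of $\partial\Omega_a$ not yet covered — the top edge is $L_{2,a}$ and the right edge is $L_{1,a}$, both in $L_a$, so the boundary is fully accounted for. By the parabolic maximum principle with mixed Dirichlet–Neumann data, $v\geq 0$ for all $t$, and then $v>0$ in the interior by the strong maximum principle (noting $v\not\equiv 0$ since $\omega$ is nonconstant). Passing to $t=t_n\to\infty$ gives $\partial_z u_1\geq 0$, and a final application of the strong maximum principle to the elliptic equation $\Delta v-\tfrac12 F_\varepsilon''(u_1)v=0$ yields $\partial_z u_1>0$.

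For the $r$-monotonicity, set $w:=\partial_r u$ and differentiate the equation. Here the cylindrical Laplacian does depend on $r$ through the $\frac1r\partial_r$ term, so differentiating $\partial_t u=\partial_r^2 u+\frac1r\partial_r u+\partial_z^2 u-\tfrac12 F_\varepsilon'(u)$ gives
\[
\partial_t w=\partial_r^2 w+\frac1r\partial_r w-\frac1{r^2}w+\partial_z^2 w-\tfrac12 F_\varepsilon''(u)\,w,
\]
i.e. $w$ solves a linear parabolic equation whose zeroth-order coefficient $-\frac1{r^2}-\tfrac12 F_\varepsilon''(u)$ is bounded above on any region $r\geq\eta>0$ but blows up as $r\to 0$. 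The boundary data: on $\{r=0\}$ we have $w=\partial_r u=0$; on $\{z=0\}$, $\partial_z w=\partial_r(\partial_z u)=0$ (Neumann); on $L_a$, $w=\partial_r\omega=0$; and at $t=0$, $w=\partial_r\omega=0$. Since $w\leq 0$ on the entire parabolic boundary and $w=0$ identically on the axis $\{r=0\}$, the maximum principle gives $w\leq 0$ throughout — the singular coefficient at $r=0$ is not an obstruction because $w$ vanishes there, so one can either work on $\{r\geq\eta\}$ and let $\eta\to 0$ using $w(0,z)=0$ as a boundary value, or argue via the function $w/r$ which satisfies a regular equation. The strong maximum principle then gives $w<0$ in the interior (again $w\not\equiv 0$ from the structure of the solution, e.g. because $\partial_z u_1>0$ forces $u_1$ to be nonconstant and the Dirichlet data on $L_{2,a}$ is the constant $1$ while $u_1<1$ in the interior, precluding $u_1$ being $r$-independent). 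Passing to the limit and applying the elliptic strong maximum principle gives $\partial_r u_1<0$ in $\Omega_a$.

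The main obstacle I anticipate is handling the singular term $-\frac1{r^2}w$ near the axis $r=0$ rigorously in the parabolic comparison argument. The clean fix is to observe $w=\partial_r u$ vanishes on $\{r=0\}$ together with its behavior $w=O(r)$ there (by the Neumann condition $\partial_r u(0,z)=0$ and smoothness), so that $\tilde w:=w/r=\frac{\partial_r u}{r}$ extends smoothly across the axis and satisfies the non-singular equation $\partial_t\tilde w=\partial_r^2\tilde w+\frac3r\partial_r\tilde w+\partial_z^2\tilde w-\tfrac12 F_\varepsilon''(u)\tilde w$ (the weighted Laplacian acting on functions of $r^2$); one then runs the maximum principle on $\tilde w$. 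A secondary technical point is justifying that the convergence $u(t_n;\cdot)\to u_1$ holds in $C^1$ (so that the sign of the derivatives passes to the limit) — this follows from interior and boundary parabolic Schauder estimates, which are available since $F_\varepsilon$ is smooth and the data are $C^2$. Both points are routine once set up correctly; no genuinely new idea is needed beyond the careful bookkeeping of the mixed boundary conditions.
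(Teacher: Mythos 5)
Your proposal is well organized, but it contains a genuine gap: you treat the sign of the \emph{normal} derivative of $u$ on the Dirichlet part of the boundary as if it were given by the boundary data. On $L_a$ the flow satisfies the Dirichlet condition $u=\omega$; that determines the tangential derivatives of $u$ along $L_a$, but says nothing about the normal ones. Concretely, on $L_{1,a}=\{r=a\}$ the derivative $\partial_r$ is normal, so your claim that $w=\partial_r u=\partial_r\omega=0$ there is unfounded: the normal derivative of a solution to a Dirichlet problem is in general nonzero, and nothing in the problem data prescribes it. Without the bound $\partial_r u\leq 0$ on $L_{1,a}$ your parabolic maximum principle argument for $w$ cannot close, since $L_{1,a}$ is part of the parabolic boundary of the domain where you are running the comparison.

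This is exactly where the paper invokes Lemma \ref{sub}. Because $\omega$ is a subsolution of \eqref{fepsilon} (hence of the flow), and $u(0;\cdot)=\omega$, $u=\omega$ on $L_a$, and $\omega$ is compatible with the Neumann conditions on $\{r=0\}$ and $\{z=0\}$ (being a function of $z$ alone with $\omega_z(r,0)=w_{\varepsilon,l}'(-l-\varepsilon)=0$), the parabolic comparison principle gives $u(t;\cdot)\geq\omega$ for all $t\geq 0$. Since $\omega$ depends only on $z$, this yields $u(t;r,z)\geq\omega(z)=u(t;a,z)$ for $r<a$, i.e. $\partial_r u\leq 0$ on $L_{1,a}$. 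With that sign in hand, your linearized maximum principle for $w=\partial_r u$ (and your treatment of the singular term near $r=0$ via $w(0,\cdot)=0$ or the $w/r$ substitution) goes through. The same issue appears in your $z$-argument on $L_{2,a}=\{z=b_\varepsilon\}$, where $\partial_z$ is normal and $v=\partial_z\omega$ does not follow from $u=\omega$; it is rescuable there because $\omega\equiv 1$ on $L_{2,a}$ by the choice of $\delta_\varepsilon$ in \eqref{delta} while $u\leq 1$, but the paper avoids this bookkeeping entirely for $\partial_z u_1$ by using the moving plane method. In short, the subsolution comparison supplying the sign of $\partial_r u$ on $L_{1,a}$ is the indispensable ingredient missing from your proposal.
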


\begin{proof}
The fact that $\partial_{z}u_{1}>0$ follows from the moving plane argument. It
remains to prove $\partial_{r}u_{1}<0.$

By Lemma \ref{sub}, we know that $\omega$ is a subsolution:%
\[
-\omega^{\prime\prime}+\frac{1}{2}F_{\varepsilon}^{\prime}\left(
\omega\right)  \leq0.
\]
In particular,
\[
\partial_{t}\omega-\Delta\omega+\frac{1}{2}F_{\varepsilon}^{\prime}\left(
\omega\right)  \leq0.
\]
Since $u\left(  0;r,z\right)  =\omega\left(  r,z\right)  ,$ parabolic
comparison principle (cf. \cite[Proposition 52.6]{Quit}) tells us that
$u\left(  t;\cdot\right)  \geq\omega\left(  \cdot\right)  $ in $\Omega_{a},$
for all $t\geq0.$ This then implies that $\partial_{r}u<0$ on $L_{1,a}$ for
any $t.$ Now the function $\phi:=\partial_{x}u$ satisfies
\[
\partial_{t}\phi-\Delta\phi+\frac{1}{2}F_{\varepsilon}^{\prime\prime}\left(
u\right)  \phi=0
\]
and $\phi\left(  0;\cdot\right)  =0$ and
\[
\phi\left(  t;\cdot\right)  \leq0\text{ on }\partial\left\{  \Omega_{a}%
\cap\left\{  x>0\right\}  \right\}  .
\]
Hence by the parabolic maximum principle (cf. \cite[Proposition 52.4]{Quit}),
$\phi\left(  t;\cdot\right)  \leq0$ on $\Omega_{a}.$ This proves the
monotonicity of $u_{1}$ in $r.$
\end{proof}

\subsubsection{A solution with almost vertical nodal set}

We shall construct a second solution $u_{2}$ whose nodal set is close to a
vertical cylinder. The energy of $u_{2}$ will be less than that of $u_{1}.$ To
show the existence of $u_{2},$ we still use the parabolic flow.

Let $U_{2}>u_{1}$ be a function such that $\partial_{r}U_{2}\leq0,$
$\partial_{z}U_{2}\geq0,$ and
\[
E\left(  U_{2}\right)  \leq10ka\ln a.
\]
Roughly speaking, we can construct $U_{2}$ whose nodal sets are almost
vertical, and locally in the direction transverse to the nodal set, it looks
like the one dimensional solution $H_{\varepsilon}.$ Now consider the solution
$u$ of the problem
\[
\left\{
\begin{array}
[c]{l}%
\partial_{t}u=\Delta u-\frac{1}{2}F_{\varepsilon}^{\prime}\left(  u\right)
,t\in\left(  0,+\infty\right)  ,\\
\partial_{r}u\left(  t;0,z\right)  =0,\partial_{z}u\left(  t;r,0\right)  =0,\\
u|_{L_{a}}=\omega,\\
u\left(  0;r,z\right)  =U_{2}.
\end{array}
\right.
\]
Similarly as before, we can show that there is a sequence $t_{n}%
\rightarrow+\infty,$ such that $u\left(  t_{n},\cdot\right)  $ converges to a
solution $u_{2}$ of $\left(  \ref{w}\right)  .$ Since $U_{2}>\omega,$ by the
comparison principle, we have $u_{2}>u_{1}.$ We also have
\[
\partial_{z}u_{2}>0\text{ and }\partial_{r}u_{2}<0\text{ in }\Omega_{a}.
\]

\subsection{Mountain pass type solutions}

We have so far obtained two solutions $u_{1}$, $u_{2},$ with $u_{1}<u_{2}.$
Now we would like to construct a mountain pass type solution using $u_{1}$ and
$u_{2}.$ Let $\mathcal{E}$ be the set of $C^{1}$ functions $\phi$ satisfying
the following properties:

\noindent(I) $u_{1}<\phi<u_{2}$ in $\Omega_{a}$,

\noindent(II) $\partial_{z}\phi>0;\partial_{r}\phi<0,$ in $\Omega_{a},$

\noindent(III) $\phi|_{L_{a}}=\omega,$

\noindent(IV)$\partial_{r}\phi\left(  0,z\right)  =0,\partial_{z}\phi\left(
r,0\right)  =0.$

To proceed, we define
\[
e_{\varepsilon}=\int_{\mathbb{R}}\left[  H_{\varepsilon}^{\prime
2}+F_{\varepsilon}\left(  H_{\varepsilon}\right)  \right]  =2\int_{-1}%
^{1}\sqrt{F_{\varepsilon}\left(  s\right)  }ds.
\]
Note that $e_{\varepsilon}\rightarrow4,$ as $\varepsilon\rightarrow0.$ Let
$\varepsilon_{0}$ be a fixed small positive constant. For each $\varepsilon
\in\left(  0,\varepsilon_{0}\right)  ,$ we can construct a family of $C^{1}$
functions $\eta_{\varepsilon}^{\ast}\left(  s;r,z\right)  $ depending
continuously on $s,$ such that $\eta_{\varepsilon}^{\ast}\left(
s;\cdot\right)  \in\mathcal{E}$ for any $s\in\left[  0,1\right]  $ and
\[
\eta_{\varepsilon}^{\ast}\left(  0;\cdot\right)  =u_{1},\eta_{\varepsilon
}^{\ast}\left(  1;\cdot\right)  =u_{2}.
\]
Moreover, we require $\partial_{s}\eta_{\varepsilon}^{\ast}\left(
s;r,z\right)  \geq0,$ and
\begin{equation}
\max_{s\in\left[  0,1\right]  }E\left(  \eta_{\varepsilon}^{\ast}\left(
s\right)  \right)  \leq\frac{a^{2}e_{\varepsilon}}{2}+\frac{e_{\varepsilon}%
}{2}k^{2}\ln a+C. \label{estar}%
\end{equation}
We may also assume that $\left\vert \nabla_{\left(  r,z\right)  }%
\eta_{\varepsilon}^{\ast}\left(  s\right)  \right\vert $ is uniformly bounded
for $s\in\left[  0,1\right]  $ and $\varepsilon\in\left(  0,\varepsilon
_{0}\right)  .$ The existence of this family of solutions essentially follow
from geometric properties of catenoids.

Now we shall consider the solution $u=u^{\ast}\left(  t;s;r,z\right)  $ of the
initial value problem%
\[
\left\{
\begin{array}
[c]{l}%
\partial_{t}u^{\ast}=\Delta u^{\ast}-\frac{1}{2}F_{\varepsilon}^{\prime
}\left(  u^{\ast}\right)  ,t\in\left(  0,+\infty\right)  ,\\
\partial_{r}u^{\ast}\left(  t;s;0,z\right)  =0,\partial_{z}u^{\ast}\left(
t;s;r,0\right)  =0,\\
u^{\ast}\left(  t;s;\cdot\right)  |_{L_{a}}=\omega,\\
u^{\ast}\left(  0;s;\cdot\right)  =\eta_{\varepsilon}^{\ast}\left(
s;\cdot\right)  .
\end{array}
\right.
\]
Using the order preserving property of the parabolic flow, we know that for
each $t\geq0$ and $s\in\left[  0,1\right]  ,$ $u^{\ast}\left(  t;s;\cdot
\right)  \in\mathcal{E}$. Moreover, $\partial_{s}u^{\ast}\left(
t;s;\cdot\right)  \geq0.$

Let
\[
P=\left\{  u^{\ast}\left(  t;\cdot\right)  :t\in\lbrack0,+\infty)\right\}  .
\]
We define
\[
c^{\ast}=\min_{\eta\in P}\max_{s\in\left[  0,1\right]  }E\left(  \eta\left(
s;\cdot\right)  \right)  .
\]
The following lemma gives us the upper bound on $c^{\ast}.$

\begin{lemma}
\label{cupper}There exists a constant $C$ independent of $a$ and
$\varepsilon,$ such that%
\[
c^{\ast}\leq\frac{a^{2}e_{\varepsilon}}{2}+\frac{e_{\varepsilon}}{2}k^{2}\ln
a+C.
\]

\end{lemma}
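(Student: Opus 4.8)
\textbf{Proof proposal for Lemma~\ref{cupper}.}

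The plan is to produce a single explicit competitor path in the class $P$ whose maximal energy is bounded by the stated quantity, and then invoke the minimax definition of $c^\ast$. The natural candidate is the path $\eta_\varepsilon^\ast(s;\cdot)$ itself: since $\eta_\varepsilon^\ast(s;\cdot)\in\mathcal E$ for every $s\in[0,1]$, $\eta_\varepsilon^\ast(0;\cdot)=u_1$, $\eta_\varepsilon^\ast(1;\cdot)=u_2$, and the bound \eqref{estar} already gives
\[
\max_{s\in[0,1]}E\big(\eta_\varepsilon^\ast(s)\big)\le\frac{a^2e_\varepsilon}{2}+\frac{e_\varepsilon}{2}k^2\ln a+C,
\]
it suffices to check that this path — or its image under the parabolic flow — lies in $P$. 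The key point is that the parabolic flow $u^\ast(t;s;\cdot)$ started from $\eta_\varepsilon^\ast$ does not increase the energy: by the gradient-flow structure,
\[
\frac{d}{dt}E\big(u^\ast(t;s;\cdot)\big)=-2\int_{\Omega_a}\big|\partial_t u^\ast(t;s;\cdot)\big|^2\le 0
\]
for each fixed $s$. Hence for every $t\ge 0$,
\[
\max_{s\in[0,1]}E\big(u^\ast(t;s;\cdot)\big)\le\max_{s\in[0,1]}E\big(u^\ast(0;s;\cdot)\big)=\max_{s\in[0,1]}E\big(\eta_\varepsilon^\ast(s)\big).
\]
Taking $t=0$ exhibits one element $\eta=u^\ast(0;\cdot)\in P$ with $\max_s E(\eta(s;\cdot))$ bounded by the right-hand side of \eqref{estar}, so the minimum over $P$ is no larger.

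The main thing to verify carefully is that $P$ is nonempty and that the flow respects the boundary and monotonicity constraints defining $\mathcal E$, i.e.\ that $u^\ast(t;s;\cdot)\in\mathcal E$ for all $t,s$ — but this was already established before the statement of the lemma via the order-preserving property of the parabolic flow and the fact that $u_1<\eta_\varepsilon^\ast(s;\cdot)<u_2$ is propagated by comparison (the bounds $u_1$ and $u_2$ are themselves stationary), while $\partial_z>0$, $\partial_r<0$ are preserved because the linearized flow acting on $\partial_z u$ and $\partial_r u$ is a parabolic equation with the correct boundary sign conditions on $\partial\Omega_a$. So no new work is needed there.

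The only genuine obstacle is the justification of \eqref{estar} itself, namely the construction of the initial family $\eta_\varepsilon^\ast$ with the claimed energy bound, which the text attributes to "geometric properties of catenoids." The heuristic is that the leading term $\tfrac12 a^2 e_\varepsilon$ is the energy of a transition layer of profile $H_\varepsilon$ placed along the horizontal disk $\{z=0\}\cap\Omega_a$ (area $\sim\pi a^2$, up to the normalization of $e_\varepsilon$), the term $\tfrac{e_\varepsilon}{2}k^2\ln a$ is the extra energy of a layer along the catenoidal neck $kr=\cosh(kz)$ whose logarithmically-growing height over $[0,a]$ contributes $\sim k\,\mathrm{arccosh}(k^{-1}a)\sim k\ln a$ per unit of the transverse profile, and the $+C$ absorbs the curvature corrections, the caps where the layers meet $L_a$, and the mismatch between $H_\varepsilon$ and the true modified profile $w_{\varepsilon,l}$. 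Concretely, one would build $\eta_\varepsilon^\ast(s;\cdot)$ by interpolating, as $s$ runs from $0$ to $1$, between the graph of a nearly-horizontal nodal surface and the graph of a nearly-vertical (catenoidal) one through a monotone family of embedded surfaces foliating the region between them, setting $\eta_\varepsilon^\ast(s;\cdot)=H_\varepsilon(\mathrm{signed\ distance\ to\ }s\text{-th\ surface})$ cut off appropriately near $L_a$, and then estimate the energy of each such ansatz by a coarea/tubular-neighborhood computation. Since the problem statement permits assuming results stated earlier in the excerpt, and \eqref{estar} appears there, the proof of the lemma is complete once the monotonicity of $E$ along the flow is recorded; I would present exactly that short argument.
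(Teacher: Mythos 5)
Your proof is correct and follows exactly the paper's argument: take the $t=0$ element $u^\ast(0;\cdot)=\eta_\varepsilon^\ast\in P$, apply the bound \eqref{estar}, and conclude from the minimax definition of $c^\ast$ (the observation that $E$ is nonincreasing along the flow being the reason the $t=0$ slice already realizes the max over $P$). The paper likewise defers the real work to the construction of $\eta_\varepsilon^\ast$ satisfying \eqref{estar}, which you correctly identify as the only substantive point; your sketch of the catenoidal layer energetics is a faithful account of what that construction amounts to.
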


\begin{proof}
This follows directly from the property $\left(  \ref{estar}\right)  $ of
$\eta_{\varepsilon}^{\ast}$ and the fact that the energy $E$ is decreasing
along the parabolic flow.
\end{proof}

To prove the existence of mountain pass solution, we need to get a lower bound
for $c^{\ast}.$ It turn out that the estimate of the lower bound is much more delicate.

\begin{lemma}
\label{e1}Suppose $r_{0}\in\left[  k,a\right]  .$ Let $\xi$ be a $C^{1}$
function defined on $\left[  r_{0},a\right]  $ such that $\xi\left(
r_{0}\right)  =0$ and $\xi\left(  a\right)  =k\operatorname{arccosh}\left(
k^{-1}a\right)  .$ Then
\[
\int_{r_{0}}^{a}\sqrt{1+\xi^{\prime2}\left(  r\right)  }rdr\geq\frac{1}%
{2}a^{2}-\frac{1}{2}r_{0}^{2}+\frac{k^{2}}{2}\ln a-C_{k},
\]
where $C_{k}$ is independent of $r_{0}$ and $a.$
\end{lemma}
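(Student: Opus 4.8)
The plan is to reduce the lower bound to a one-dimensional variational problem comparing the given curve $\xi$ against the catenary profile. First I would observe that the integral $\int_{r_0}^a \sqrt{1+\xi'^2}\,r\,dr$ is (up to a factor of $2\pi$) the area of the surface of revolution generated by the graph of $\xi$ over $[r_0,a]$, and that among all curves with the prescribed endpoint values the area is minimized precisely by a catenary $z=k\operatorname{arccosh}(k^{-1}r)+c$ — this is why the catenoid $kr=\cosh(kz)$ enters. So the natural strategy is: (i) bound $\int_{r_0}^a \sqrt{1+\xi'^2}\,r\,dr$ from below by the corresponding integral for the catenary solving the same endpoint problem, using the convexity of $p\mapsto\sqrt{1+p^2}$ together with the Euler--Lagrange/calibration structure; and (ii) compute or estimate the catenary integral directly and check that it equals $\tfrac12 a^2-\tfrac12 r_0^2+\tfrac{k^2}{2}\ln a-C_k$ up to a constant depending only on $k$.

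For step (i), the clean way is a calibration argument. Let $\psi(r)$ denote the catenary with $\psi'(r)=$ the slope making $\sqrt{1+\psi'^2}\,r$ stationary; explicitly, for the catenoid one has $r\,\psi'/\sqrt{1+\psi'^2}=$ const, so set $\beta(r):=\psi'(r)/\sqrt{1+\psi'^2(r)}\in(-1,1)$, a function with $|\beta|<1$ on the relevant range. Then by the pointwise inequality $\sqrt{1+p^2}\ge \sqrt{1+q^2}+\frac{q}{\sqrt{1+q^2}}(p-q)$ (convexity, with $q=\psi'(r)$, $p=\xi'(r)$), multiply by $r>0$ and integrate:
\[
\int_{r_0}^a \sqrt{1+\xi'^2}\,r\,dr \;\ge\; \int_{r_0}^a \sqrt{1+\psi'^2}\,r\,dr + \int_{r_0}^a r\,\beta(r)\,(\xi'-\psi')\,dr.
\]
The last term is handled by integration by parts: $\frac{d}{dr}(r\beta(r))=0$ along the catenary (that is exactly the minimal surface ODE), so it reduces to the boundary term $\big[r\beta(r)(\xi-\psi)\big]_{r_0}^a$, which vanishes since $\xi$ and $\psi$ agree at both endpoints (we choose the catenary $\psi$ with $\psi(r_0)=0$, $\psi(a)=k\operatorname{arccosh}(k^{-1}a)$ — one must check such a catenary exists, or allow an additive constant and absorb the resulting controlled error into $C_k$). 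Hence the area of $\xi$ dominates the area of the catenary up to nothing.

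For step (ii), I would just plug in $\psi'(r)$ from the catenoid equation and integrate. With $kr=\cosh(kz)$ one gets $\psi'(r)=1/\sqrt{k^2r^2-1}$ for $r$ slightly beyond $1/k$, so $\sqrt{1+\psi'^2}\,r = r\cdot\frac{kr}{\sqrt{k^2r^2-1}} = \frac{kr^2}{\sqrt{k^2r^2-1}}$, and the antiderivative is elementary: writing $kr^2/\sqrt{k^2r^2-1}=\frac{1}{k}\sqrt{k^2r^2-1}+\frac{1}{k\sqrt{k^2r^2-1}}$, integrating gives $\frac{r}{2k}\sqrt{k^2r^2-1}-\frac{1}{2k^2}\operatorname{arccosh}(kr)+\frac{1}{k}\operatorname{arccosh}(kr)$ evaluated on $[r_0,a]$. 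As $a\to\infty$ the dominant term $\frac{r}{2k}\sqrt{k^2r^2-1}\sim\frac12 r^2$ produces $\tfrac12 a^2-\tfrac12 r_0^2$, the logarithmic term produces $-\tfrac{k^2}{2}\ln a$... — here one needs to be careful about signs and the $k^2$ coefficient. Since $\operatorname{arccosh}(kr)=\ln(2kr)+o(1)$ for large $r$, the term $-\frac{1}{2k^2}\operatorname{arccosh}(kr)$ contributes $-\frac{1}{2k^2}\ln a$, which is not the claimed $+\frac{k^2}{2}\ln a$; so I suspect the catenary relevant here is not $kr=\cosh(kz)$ with slope going to $0$, but rather the \emph{other} branch (nearly-horizontal large-scale behavior with the vertical catenoid used at scale $1/k$), or equivalently the relevant scaling is $r\sim k\cdot(\text{large})$, and the correct profile satisfies $z\approx k\ln r+b$ as the paper states. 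In that regime $\psi'(r)\approx k/r$, so $\sqrt{1+\psi'^2}\,r\approx r+\frac{k^2}{2r}$, whose integral over $[r_0,a]$ is exactly $\tfrac12 a^2-\tfrac12 r_0^2+\tfrac{k^2}{2}\ln(a/r_0)\ge \tfrac12 a^2-\tfrac12 r_0^2+\tfrac{k^2}{2}\ln a - C_k$ using $r_0\le a$ and $r_0\ge k$. This matches the claim, and the higher-order corrections to $\psi'$ only cost a $k$-dependent constant.

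The main obstacle I anticipate is precisely reconciling these two descriptions — making rigorous that the extremal curve subject to the endpoint constraint $\xi(a)=k\operatorname{arccosh}(k^{-1}a)$ is the large-$r$ catenary branch with $\psi'\approx k/r$ (not the steep branch), and carefully tracking the error terms so they collapse into a single constant $C_k$ independent of both $r_0$ and $a$. Concretely, I would (1) fix the catenary $\psi$ in $\mathcal{E}$-compatible form interpolating the endpoints, proving existence of the interpolating parameter and the bound $\psi'(r)=k/r + O(k^3/r^3)$ uniformly; (2) run the convexity-plus-integration-by-parts calibration above to get $\int\sqrt{1+\xi'^2}\,r\,dr\ge \int\sqrt{1+\psi'^2}\,r\,dr$ with the boundary terms killed by matching endpoints; (3) expand $\sqrt{1+\psi'^2}\,r = r + \frac{k^2}{2r} + O(k^4/r^3)$ and integrate, bounding $\int_{r_0}^a O(k^4/r^3)\,dr = O(k^4/r_0^2)=O(k^2)$ since $r_0\ge k$, and using $\ln(a/r_0)\ge \ln a - \ln(\text{const})$... — actually $r_0$ can be as large as $a$, so $\tfrac{k^2}{2}\ln(a/r_0)$ could vanish, which is fine because then $\tfrac12 a^2 - \tfrac12 r_0^2=0$ too and the $-C_k$ slack absorbs the logarithm; the honest inequality is $\tfrac{k^2}{2}\ln(a/r_0)\ge \tfrac{k^2}{2}\ln a - \tfrac{k^2}{2}\ln r_0$... — no. The cleanest route is to not interpolate to the exact endpoint but to compare with a fixed catenary $\psi_0$ (with $\psi_0(r_0)=0$, free at $a$) and separately estimate; I would settle this bookkeeping last. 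In any case, the structural heart is the area-minimality of catenaries via the convexity calibration, and everything else is controlled one-dimensional integration producing the constant $C_k$.
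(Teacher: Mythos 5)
Your central idea — that $\int\sqrt{1+\xi'^2}\,r\,dr$ is convex in $\xi$ (since $p\mapsto\sqrt{1+p^2}$ is convex and the extra factor $r>0$ doesn't depend on $\xi$), so a catenary solving the Euler--Lagrange equation is the global minimizer among graphs with matching endpoints — is exactly what the paper means when it says the catenoid "has minimizing area," and the calibration/integration-by-parts you sketch is a valid way to make it precise. But you do not actually arrive at a complete proof, because you try to interpolate a catenary $\psi$ through the \emph{given} endpoints $(r_0,0)$ and $(a,k\operatorname{arccosh}(k^{-1}a))$. That interpolating catenary has parameters $(\sigma,d)$ depending on \emph{both} $r_0$ and $a$ (indeed for $r_0$ comparable to $a$, solving $\sigma(\operatorname{arccosh}(a/\sigma)-\operatorname{arccosh}(r_0/\sigma))=k\operatorname{arccosh}(a/k)$ forces $\sigma$ to grow with $a$), so showing the resulting constant $C_k$ is independent of $r_0$ and $a$ is precisely the hard part, and you explicitly defer it ("I would settle this bookkeeping last"). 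The sign/scaling confusion in your step (ii) — writing $kr=\cosh(kz)$ instead of $r=k\cosh(z/k)$, which gives $\psi'=1/\sqrt{k^2r^2-1}$ rather than the correct $g'(r)=k/\sqrt{r^2-k^2}$ — is a symptom of the same underdetermination: you never pin down which catenary you're comparing against.

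The paper's proof avoids all of this with a single device you are missing: it \emph{extends} $\xi$ by the constant $0$ on $[k,r_0]$ to get $\bar\xi$ on $[k,a]$. Now $\bar\xi(k)=0=g(k)$ and $\bar\xi(a)=k\operatorname{arccosh}(k^{-1}a)=g(a)$ with the \emph{fixed} catenary $g(r)=k\operatorname{arccosh}(k^{-1}r)$, independent of $r_0$. The area-minimality then gives $\int_k^a\sqrt{1+\bar\xi'^2}\,r\,dr\ge\int_k^a\sqrt{1+g'^2}\,r\,dr=\tfrac{k^2}{2}\operatorname{arccosh}(k^{-1}a)+\tfrac{a^2}{2}\sqrt{1-k^2a^{-2}}$, which is $\tfrac12a^2+\tfrac{k^2}{2}\ln a-C_k$ with $C_k$ manifestly $r_0$- and $a$-independent, and one subtracts $\int_k^{r_0}\sqrt{1+0}\,r\,dr=\tfrac12(r_0^2-k^2)$ to conclude. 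I'd recommend you adopt this extension trick: it removes the existence issue for the interpolating catenary, eliminates the dependence of the comparison profile on the data, and turns the "bookkeeping" into a single explicit computation.
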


\begin{proof}
Define a new function
\[
\bar{\xi}\left(  r\right)  :=\left\{
\begin{array}
[c]{l}%
\xi\left(  r\right)  ,r\in\left[  r_{0},a\right]  ,\\
0,r\in\left[  k,r_{0}\right]  .
\end{array}
\right.
\]
Then using the fact that the function $g\left(  r\right)
:=k\operatorname{arccosh}\left(  k^{-1}r\right)  $ represents a minimal
surface (a catenoid) and hence it has minimizing area, we get
\begin{align*}
\int_{k}^{a}\sqrt{1+\bar{\xi}^{\prime2}\left(  r\right)  }rdr  &  \geq\int%
_{k}^{a}\sqrt{1+g^{\prime2}\left(  r\right)  }rdr\\
&  =\int_{0}^{k\operatorname{arccosh}\left(  k^{-1}a\right)  }\sqrt
{1+\sinh^{2}\left(  k^{-1}z\right)  }k\cosh\left(  k^{-1}z\right)  dz\\
&  =\frac{k^{2}}{2}\operatorname{arccosh}\left(  k^{-1}a\right)  +\frac{a^{2}%
}{2}\sqrt{1-k^{2}a^{-2}}.
\end{align*}
Since $\int_{k}^{r_{0}}\sqrt{1+\bar{\xi}^{\prime2}\left(  r\right)  }%
rdr=\frac{1}{2}r_{0}^{2}-\frac{1}{2}k^{2},$ we then get
\[
\int_{r_{0}}^{a}\sqrt{1+\bar{\xi}^{\prime2}\left(  r\right)  }rdr\geq
\frac{a^{2}}{2}-\frac{r_{0}^{2}}{2}+\frac{k^{2}}{2}\ln a-C_{k}.
\]
This is the desired estimate.
\end{proof}

\begin{proposition}
\label{moun}For $\varepsilon$ small enough, there exists a constant $C$
independent of $a,\varepsilon,$ such that%
\[
c^{\ast}\geq\frac{1}{2}a^{2}e_{\varepsilon}+\frac{k^{2}}{20}\ln a-C.
\]

\end{proposition}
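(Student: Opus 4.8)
The plan is to bound $c^{\ast}$ from below by showing that \emph{every} path in $P$ must pass through a configuration whose energy is at least $\tfrac12 a^2 e_\varepsilon + \tfrac{k^2}{20}\ln a - C$. The starting point is a topological/intermediate-value observation: along a path $\eta(s;\cdot)$ from $u_1$ to $u_2$ inside the class $\mathcal E$, the nodal set (or more precisely the level set $\{\eta(s;\cdot)=0\}$, which is a graph $z=\xi_s(r)$ over $r\in[0,a]$ by the monotonicity properties (II)) moves continuously. At $s=0$ it is essentially horizontal near $z\approx 0$ meeting $L_{1,a}$ low down, while at $s=1$ it is close to a vertical cylinder, so it meets the axis $r=0$ (its left endpoint $\xi_s(0)$ is pushed up). There must be an intermediate $s_\ast$ where the nodal curve has its left endpoint at some $r_0\in[k,a]$ on the bottom $\{z=0\}$, i.e. $\xi_{s_\ast}(r_0)=0$, while still reaching the prescribed boundary value $\xi_{s_\ast}(a)=k\operatorname{arccosh}(k^{-1}a)$ on $L_{1,a}$ (this last is forced by (III), since $\omega$ on $L_{1,a}$ vanishes precisely at that height). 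This is the configuration to which Lemma \ref{e1} applies.

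Next I would convert the geometric length lower bound of Lemma \ref{e1} into an energy lower bound for $\eta(s_\ast;\cdot)$. The standard device: for a function $\phi$ with a regular nodal set that transitions like $H_\varepsilon$ across it, one has the pointwise inequality $|\nabla\phi|^2 + F_\varepsilon(\phi) \geq 2\sqrt{F_\varepsilon(\phi)}\,|\nabla\phi|$, and integrating $2\sqrt{F_\varepsilon(\phi)}|\nabla\phi|$ over $\Omega_a$ with the coarea formula gives $\int_{-1}^{1} \big(2\sqrt{F_\varepsilon(\tau)}\big)\,\mathcal H^1\big(\{\phi=\tau\}\big)\,d\tau$ (weighted by $r$ in the axially symmetric measure $r\,dr\,dz$). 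Since each level set $\{\phi=\tau\}$ is, for $\tau$ in a fixed compact subinterval of $(-1,1)$, a small perturbation of the nodal graph $z=\xi_{s_\ast}(r)$, its weighted length is $\geq \int_{r_0}^a \sqrt{1+\xi_{s_\ast}'^2}\,r\,dr - o(1)$, and the $\tau$-integral of $2\sqrt{F_\varepsilon}$ contributes $e_\varepsilon$. Combining with Lemma \ref{e1},
\[
E\big(\eta(s_\ast;\cdot)\big) \;\geq\; e_\varepsilon\Big(\tfrac12 a^2 - \tfrac12 r_0^2 + \tfrac{k^2}{2}\ln a\Big) - C.
\]
The term $\tfrac12 r_0^2 e_\varepsilon$ looks bad, but it is compensated: on the region $\{r<r_0\}$ the function $\eta(s_\ast;\cdot)$ is close to $+1$ (above the nodal curve) over essentially the full height $b_\varepsilon$, so there is a \emph{separate} energy contribution there. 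Actually the cleaner route is to only claim $r_0\le C k$ at the crossing time, or — following the statement's weaker constant $\tfrac{k^2}{20}$ rather than $\tfrac{k^2}{2}$ — to absorb $-\tfrac12 r_0^2 e_\varepsilon$ into the horizontal-nodal energy of $u_1$ and argue directly that $c^\ast \ge \tfrac12 a^2 e_\varepsilon + \tfrac{k^2}{20}\ln a - C$ after these losses; the factor $20$ gives plenty of slack for the $o(1)$ and $\varepsilon$-dependent errors and for any imprecision in localizing $r_0$.

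The main obstacle, and where I would spend the most care, is the intermediate-value step combined with controlling $r_0$: one must rule out that the path "cheats" by having the nodal set degenerate (e.g. the transition region smearing out so that no clean graph $\xi_s$ exists, or the nodal set detaching from the bottom boundary without passing through the claimed configuration). The monotonicity constraints (II) built into $\mathcal E$ are exactly what prevent this — $\partial_z\phi>0$ forces $\{\phi=0\}$ to be a graph $z=\xi_s(r)$, and $\partial_r\phi<0$ forces this graph to be monotone, so its endpoints on $\partial\Omega_a$ vary continuously and the intermediate-value argument is rigorous; one also needs that the level sets for $\tau$ near $0$ stay $C^1$-close to this graph, which follows from the uniform gradient bound on $\eta^\ast(s)$ and the structure of $H_\varepsilon$ (transition width $O(\varepsilon)$). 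A secondary technical point is the coarea/weighted-length bookkeeping near the axis $r=0$ and near the corners of $\Omega_a$, but these contribute only $O(1)$ and are harmless given the slack. Finally, since $c^\ast$ is an infimum over the orbit $P$ of a specific path under the gradient flow, I would note that energy only decreases along the flow and the crossing configuration persists (the flow preserves $\mathcal E$ and the monotonicity of $s\mapsto u^\ast(t;s;\cdot)$), so the lower bound at the crossing time holds uniformly in $t$, hence for $c^\ast$ itself.
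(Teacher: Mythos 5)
Your overall strategy matches the paper's: an intermediate-value argument inside the monotone class $\mathcal{E}$ to extract a crossing configuration on every path, followed by the coarea formula and Lemma \ref{e1}. But there is a genuine gap exactly where you flag the "main obstacle": the term $-\tfrac{1}{2}r_0^2 e_\varepsilon$ coming from Lemma \ref{e1}. The intermediate-value argument gives you \emph{no} upper bound on $r_0$ -- a priori the nodal graph could meet $\{z=0\}$ at $r_0$ of order $a$, at which point $-\tfrac{1}{2}r_0^2 e_\varepsilon$ wipes out both the logarithmic gain and a chunk of the leading $\tfrac{1}{2}a^2 e_\varepsilon$. Your two suggested fixes do not work: "claim $r_0 \le Ck$ at the crossing time" is unjustified (nothing in $\mathcal{E}$ or in the flow enforces it), and "absorb $-\tfrac{1}{2}r_0^2 e_\varepsilon$ into the horizontal-nodal energy of $u_1$" is a category error, since $c^\ast$ must dominate the energy \emph{along every path}, not be compared to $E(u_1)$.

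The paper resolves precisely this difficulty, and it is the heart of the proof. It pins the crossing condition at a specific interior point $(k, \tfrac{k}{10}\ln a)$, and then considers the function $\phi(r):=F_\varepsilon(u(r,0))$. It runs a dichotomy on $\int_0^a \phi(r)\,r\,dr$. If this integral exceeds $\tfrac{k^2}{10}\ln a$, the potential-energy contribution in the strip $0<z<1$ alone, added to the slab $z>1$ which already carries $\tfrac{a^2}{2}e_\varepsilon$, gives the desired lower bound directly, without ever using Lemma \ref{e1}'s subtraction. If instead the integral is $\le \tfrac{k^2}{10}\ln a$, then by the monotonicity of $\phi$ one gets the pointwise bound $\phi(r)r^2 \le 2\int_0^r \phi(t)t\,dt \le \tfrac{k^2}{5}\ln a$, which controls the size of the $r$-values appearing in the $-\tfrac12 r^2$ term uniformly in $s$; this is what caps the loss at $O(k^2\ln a)$ with a coefficient small enough that the net coefficient of $\ln a$ stays positive (hence the $\tfrac{k^2}{20}$). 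Your proposal, as written, has no analogue of this dichotomy and therefore does not establish the proposition.
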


\begin{proof}
Let $\eta\in P.$ Since $\eta$ is a continuous family of $C^{1}$ functions from
$u_{1}$ to $u_{2}$, we know that there is a $s_{0}\in\left(  0,1\right)  $,
such that the function $u\left(  \cdot\right)  :=\eta\left(  s_{0}%
;\cdot\right)  $ is equal to $0$ at the point $\left(  k,\frac{k}{10}\ln
a\right)  .$ We introduce the notation%
\[
\Omega_{a}^{-}=\left\{  X\in\Omega_{a}:u\left(  X\right)  <0\right\}  ,
\]
\[
\Omega_{a}^{+}=\left\{  X\in\Omega_{a}:u\left(  X\right)  >0\right\}  .
\]

By the coarea formula, we have%
\begin{align*}
\int_{\Omega_{a}^{+}}\left[  \left\vert \nabla u\right\vert ^{2}%
+F_{\varepsilon}\left(  u\right)  \right]   &  \geq2\int_{\Omega_{a}^{+}%
}\left[  \left\vert \nabla u\right\vert \sqrt{F_{\varepsilon}\left(  u\right)
}\right] \\
&  =2\int_{0}^{1}A\left(  s\right)  \sqrt{F_{\varepsilon}\left(  s\right)
}ds,
\end{align*}
where
\[
A\left(  s\right)  =\text{Area of }\left\{  X:u\left(  X\right)  =s\right\}
.
\]
Since $u$ is monotone in $r$ and $z,$ we deduce that for $s\in\left(
0,1\right)  ,$
\[
A\left(  s\right)  \geq\frac{1}{2}a^{2}-C,
\]
where $C$ does not depend on $a$ and $\varepsilon.$ Hence
\begin{equation}
\int_{\Omega_{a}^{+}}\left[  \left\vert \nabla u\right\vert ^{2}%
+F_{\varepsilon}\left(  u\right)  \right]  \geq\frac{1}{4}e_{\varepsilon}%
a^{2}-C. \label{+}%
\end{equation}

Next we estimate the energy in the region $\Omega_{a}^{-},$ which is more
involved. For $r\geq0,$ we define $s=u\left(  r,0\right)  .$ It is a function
of $r.$ Applying Lemma \ref{e1}, we infer that for $s\leq\min\left\{
0,u\left(  0,0\right)  \right\}  ,$
\[
A\left(  s\right)  \geq\frac{1}{2}a^{2}-\frac{1}{2}r^{2}+\frac{k^{2}}{2}\ln
a-C.
\]
Using this estimate, we find that
\begin{align}
&  \int_{\Omega_{a}^{-}}\left[  \left\vert \nabla u\right\vert ^{2}%
+F_{\varepsilon}\left(  u\right)  \right]  \geq\int_{-1}^{0}A\left(  s\right)
\sqrt{F_{\varepsilon}\left(  s\right)  }ds\nonumber\\
&  \geq\int_{\min\left\{  0,u\left(  0,0\right)  \right\}  }^{0}A\left(
s\right)  \sqrt{F_{\varepsilon}\left(  s\right)  }ds\nonumber\\
&  +\left(  \frac{1}{2}a^{2}+\frac{k^{2}}{2}\ln a\right)  \int_{-1}%
^{\min\left\{  0,u\left(  0,0\right)  \right\}  }\sqrt{F_{\varepsilon}\left(
s\right)  }ds\nonumber\\
&  -\frac{1}{2}\int_{-1}^{\min\left\{  0,u\left(  0,0\right)  \right\}  }%
r^{2}\sqrt{F_{\varepsilon}\left(  s\right)  }ds-C. \label{e0}%
\end{align}
We would like to estimate the last integral. For this purpose, define a new
function $\phi\left(  r\right)  :=F_{\varepsilon}\left(  u\left(  r,0\right)
\right)  =F_{\varepsilon}\left(  s\right)  .$ We distinguish two possibilities.

Case 1.
\begin{equation}
\int_{0}^{a}\phi\left(  r\right)  rdr>\frac{k^{2}}{10}\ln a. \label{low}%
\end{equation}
In this case, we have
\begin{align*}
E\left(  u\right)   &  =\int_{\Omega_{a}\cap\left\{  z>1\right\}  }\left[
\left\vert \nabla u\right\vert ^{2}+F_{\varepsilon}\left(  u\right)  \right]
\\
&  +\int_{\Omega_{a}\cap\left\{  0<z<1\right\}  }\left[  \left\vert \nabla
u\right\vert ^{2}+F_{\varepsilon}\left(  u\right)  \right] \\
&  \geq\frac{a^{2}}{2}e_{\varepsilon}+\int_{\Omega_{a}\cap\left\{
0<z<1\right\}  }F_{\varepsilon}\left(  u\right)  -C.
\end{align*}
Due to the monotonicity of $u$ in the $r$ and $z$ direction, we have%
\begin{align*}
\int_{\Omega_{a}\cap\left\{  0<z<1\right\}  }F_{\varepsilon}\left(  u\right)
&  \geq\int_{\Omega_{a}\cap\left\{  0<z<1\right\}  }F_{\varepsilon}\left(
u\left(  r,0\right)  \right) \\
&  =\int_{0}^{a}\phi\left(  r\right)  rdr.
\end{align*}
It then follows from $\left(  \ref{low}\right)  $ that
\[
E\left(  u\right)  \geq\frac{a^{2}}{2}e_{\varepsilon}+\frac{k^{2}}{10}\ln
a-C.
\]
This is the desired estimate.

Case 2.
\begin{equation}
\int_{0}^{a}\phi\left(  r\right)  rdr\leq\frac{k^{2}}{10}\ln a. \label{low2}%
\end{equation}
In this case, we write
\[
\int_{-1}^{\min\left\{  0,u\left(  0,0\right)  \right\}  }r^{2}\sqrt
{F_{\varepsilon}\left(  s\right)  }ds=\int_{-1}^{-1+\frac{\varepsilon}{2}%
}r^{2}\sqrt{F_{\varepsilon}\left(  s\right)  }ds+\int_{-1+\frac{\varepsilon
}{2}}^{\min\left\{  0,u\left(  0,0\right)  \right\}  }r^{2}\sqrt
{F_{\varepsilon}\left(  s\right)  }ds.
\]
Let us estimate these two integrals separately.

Recall that when $s\in\left[  -1,-1+\frac{\varepsilon}{2}\right]  ,$
$\phi\left(  r\right)  =F_{\varepsilon}\left(  s\right)  =\varepsilon
^{-2}\left(  s+1\right)  ^{2}.$ Let $\bar{t}$ be the point where $u\left(
\bar{t},0\right)  =-1+\frac{\varepsilon}{2}.$ Then%
\begin{equation}
\int_{-1+\frac{\varepsilon}{2}}^{\min\left\{  0,u\left(  0,0\right)  \right\}
}r^{2}\sqrt{F_{\varepsilon}\left(  s\right)  }ds\leq\bar{t}^{2}. \label{F}%
\end{equation}
On the other hand, using the monotonicity of $\phi$ and $\left(
\ref{low2}\right)  ,$ we get
\begin{equation}
\phi\left(  r\right)  r^{2}\leq2\int_{0}^{r}\phi\left(  t\right)  tdt\leq
\frac{k^{2}}{5}\ln a,\text{ for any }t\in\left[  0,a\right]  . \label{fit}%
\end{equation}
This together with $\phi\left(  \bar{t}\right)  =\frac{1}{2}$ tells us that
$\bar{t}^{2}\leq\frac{2k^{2}}{5}\ln a.$ Hence in view of $\left(
\ref{F}\right)  ,$ we find that
\begin{equation}
\int_{-1+\frac{\varepsilon}{2}}^{\min\left\{  0,u\left(  0,0\right)  \right\}
}r^{2}\sqrt{F_{\varepsilon}\left(  s\right)  }ds\leq\bar{t}^{2}\leq
\frac{2k^{2}}{5}\ln a. \label{e4}%
\end{equation}

Next, we compute
\begin{align*}
\int_{-1}^{-1+\frac{\varepsilon}{2}}r^{2}\sqrt{F_{\varepsilon}\left(
s\right)  }ds  &  =-\frac{\varepsilon}{2}\int_{\bar{t}}^{a}r^{2}\phi^{\prime
}\left(  r\right)  dr\\
&  =-\frac{\varepsilon}{2}\left(  \phi\left(  a\right)  a^{2}-\phi\left(
\bar{t}\right)  \bar{t}^{2}\right)  +\varepsilon\int_{\bar{t}}^{a}\phi\left(
t\right)  tdt.
\end{align*}
Applying $\left(  \ref{low2}\right)  $ and $\left(  \ref{fit}\right)  ,$ we
get
\begin{equation}
\int_{-1}^{-1+\frac{\varepsilon}{2}}r^{2}\sqrt{F_{\varepsilon}\left(
s\right)  }ds\leq\frac{2k^{2}\varepsilon}{5}\ln a. \label{e5}%
\end{equation}
Combining $\left(  \ref{+}\right)  ,\left(  \ref{e0}\right)  ,\left(
\ref{e4}\right)  ,\left(  \ref{e5}\right)  ,$ we obtain
\begin{align*}
E\left(  u\right)   &  \geq\int_{\Omega_{a}^{-}}\left[  \left\vert \nabla
u\right\vert ^{2}+F_{\varepsilon}\left(  u\right)  \right]  +\int_{\Omega
_{a}^{+}}\left[  \left\vert \nabla u\right\vert ^{2}+F_{\varepsilon}\left(
u\right)  \right] \\
&  \geq\frac{1}{2}a^{2}e_{\varepsilon}+\frac{k^{2}}{2}e_{\varepsilon}\ln
a-\frac{1}{2}\int_{-1}^{\min\left\{  0,u\left(  0,0\right)  \right\}  }%
r^{2}\sqrt{F_{\varepsilon}\left(  s\right)  }ds-C\\
&  \geq\frac{1}{2}a^{2}e_{\varepsilon}+\frac{k^{2}}{2}e_{\varepsilon}\ln
a-\frac{k^{2}}{5}\ln a-\frac{k^{2}\varepsilon}{5}\ln a-C\\
&  \geq\frac{1}{2}a^{2}e_{\varepsilon}+\frac{k^{2}}{20}\ln a-C,
\end{align*}
provided that $\varepsilon$ is small enough. This finishes the proof.
\end{proof}

\begin{remark}
For the purpose of obtaining a mountain pass solution, one only need to prove
the estimate
\[
c^{\ast}\geq\frac{1}{2}a^{2}e_{\varepsilon}+\delta,
\]
for some universal constant $\delta.$ See Lemma \ref{a} for the corresponding
estimate in the higher dimensional case.
\end{remark}

\begin{proposition}
Let $a$ be large enough. Then there exists a mountain pass solution
$U_{\varepsilon}=U_{\varepsilon,a}$ to $\left(  \ref{w}\right)  .$ Moreover,
$\partial_{r}U_{\varepsilon}>0,\partial_{z}U_{\varepsilon}<0$ in $\Omega_{a}.$
\end{proposition}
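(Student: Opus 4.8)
The plan is to run the standard minimax (mountain pass) argument in the ordered interval between the two sub/supersolutions $u_1$ and $u_2$, using the parabolic flow only as a device to produce a Palais--Smale sequence at the level $c^{\ast}$. Concretely, I would first observe that, by Lemma~\ref{cupper} and Proposition~\ref{moun}, the minimax value satisfies
\[
\tfrac12 a^{2}e_{\varepsilon}+\tfrac{k^{2}}{20}\ln a-C\le c^{\ast}\le \tfrac12 a^{2}e_{\varepsilon}+\tfrac{e_{\varepsilon}}{2}k^{2}\ln a+C,
\]
so in particular $c^{\ast}$ is \emph{strictly} above the energies $E(u_1)$ and $E(u_2)$ of the two endpoints (these are bounded by $\tfrac12 a^{2}e_{\varepsilon}+Cka\ln a$-type quantities which are of strictly lower order in the relevant regime; more precisely one checks $E(u_1),E(u_2)<c^{\ast}-\delta_0$ for some $\delta_0>0$ depending only on $a$). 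This is the genuine mountain-pass geometry: every path in $P$ from $u_1$ to $u_2$ must cross the level $c^{\ast}$.

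Next I would extract a Palais--Smale sequence at level $c^{\ast}$ in the natural function space $H^{0,1}(\Omega_a)$ (functions with the prescribed Dirichlet data $\omega$ on $L_a$ and the Neumann conditions on the axis and on $\{z=0\}$). Exactly as in the two earlier lemmas producing $u_1$ and $u_2$, I pick a sequence of paths $\eta_n\in P$ whose maxima tend to $c^{\ast}$; for each $n$ I flow the path by the gradient/parabolic flow and use $E'(u(t))=-2\int|\partial_t u|^2$ together with a deformation argument to force, along a slice $s_n$ realizing the near-maximum, a point $v_n:=\eta_n(s_n;\cdot)$ with $E(v_n)\to c^{\ast}$ and $dE[v_n]\to0$. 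Because $F_{\varepsilon}$ is \emph{smooth} and $-1<v_n<1$, the functional $E$ is $C^1$ on $H^{0,1}$ and satisfies the Palais--Smale condition (the nonlinearity is bounded and the domain $\Omega_a$ is bounded, so a subsequence converges strongly in $H^1$); hence $v_n\to U_{\varepsilon}$ strongly, $U_{\varepsilon}$ is a critical point of $E$ with $E(U_{\varepsilon})=c^{\ast}$, and therefore a classical solution of \eqref{w} by elliptic regularity. Finally, since every element of $P$ lies in the order interval $u_1<\cdot<u_2$ and has $\partial_z(\cdot)>0$, $\partial_r(\cdot)<0$, these inequalities pass to the limit; strong maximum principle applied to $\partial_z U_\varepsilon$ and $\partial_r U_\varepsilon$ (which solve a linear equation with potential $\tfrac12 F_\varepsilon''(U_\varepsilon)$) upgrades them to strict inequalities in the interior $\Omega_a$. (Note: the statement writes $\partial_r U_\varepsilon>0$, $\partial_z U_\varepsilon<0$, which must be a sign convention matching the orientation used there; with the convention of $\mathcal E$ one gets $\partial_r U_\varepsilon<0$, $\partial_z U_\varepsilon>0$, and I would just state it consistently with the class $\mathcal E$.)

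The main obstacle is the verification that the minimax value $c^{\ast}$ is actually a \emph{critical} value rather than being attained at one of the endpoints or at a spurious boundary of the constrained class $\mathcal E$ — i.e. making the mountain-pass deformation lemma legitimate inside the ordered, monotone, mixed-boundary-data class. Two points need care here. First, one must ensure the parabolic flow keeps paths inside $\mathcal E$: the order-preserving property gives $u_1<u^{\ast}(t;s;\cdot)<u_2$ and monotonicity in $s$, and $\partial_z>0$, $\partial_r<0$ are preserved because the flow commutes with translations up to the boundary terms, but one should check that the flow does not push a path to have $\partial_z\phi=0$ or $\partial_r\phi=0$ somewhere — here the strong maximum principle for the linearized parabolic equation is what saves us, since strict monotonicity is an open condition preserved along the flow. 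Second, one must confirm the strict gap $E(u_i)<c^{\ast}$; this is exactly where Proposition~\ref{moun} (and the remark following it, reducing the needed estimate to $c^{\ast}\ge \tfrac12 a^2 e_\varepsilon+\delta$ for a universal $\delta$) is used, together with an explicit upper bound $E(u_1)\le \tfrac12 a^2 e_\varepsilon + o(a^2)$ coming from the near-product structure of $u_1$ near the almost-horizontal nodal set. Once these are in hand, the construction is the routine parabolic-flow mountain pass of \cite{Gui}, adapted to the one-phase double-well potential $F_\varepsilon$ in the cylinder $\Omega_a$.
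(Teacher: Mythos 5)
Your proposal follows the paper's own argument: the paper's proof is precisely to note, via Proposition~\ref{moun}, that $c^{\ast}>\max_{i=1,2}E(u_i)$ for $a$ large and then invoke ``standard arguments in variational methods'' (i.e.\ the parabolic-flow mountain-pass scheme of \cite{Gui}), which is exactly what you spell out in more detail. Your side-remark about the sign typo in the statement is correct --- the class $\mathcal{E}$ and the monotonicity lemmas give $\partial_r U_\varepsilon<0$, $\partial_z U_\varepsilon>0$, so the inequalities in the Proposition as printed have the signs reversed.
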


\begin{proof}
By Proposition \ref{moun},
\begin{equation}
c^{\ast}\geq\frac{1}{2}a^{2}e_{\varepsilon}+\frac{k^{2}}{20}\ln a-C>\max
_{i=1,2}E\left(  u_{i}\right)  , \label{energy}%
\end{equation}
provided that $a$ is sufficiently large. Standard arguments in variational
methods yield the existence of a solution $U_{\varepsilon,a}$ whose energy is
equal to $c^{\ast}.$
\end{proof}

\section{Asymptotic analysis of $\left\{  U_{\varepsilon}\right\}  $ and
regularity of the free boundary of the limiting solution\label{Sec3}}

For each fixed large constant $a,$ we have obtained a family of solutions
$U_{\varepsilon}$ to the regularized problem. Using arguments of Section 1.2
of Caffarelli-Salsa\cite{Cs}, we can show that $\left\vert \nabla
U_{\varepsilon,a}\right\vert \leq C.$ Therefore, $U_{\varepsilon,a}$ converges
in $C^{0,\alpha}\left(  \Omega_{a}\right)  $ to a function $V_{a}.$ Since
$F_{\varepsilon}$ converges on any compact subinterval of $\left(
-1,1\right)  $ to $1,$ $V_{a}$ is a harmonic function in the region $\Xi
_{a}:=\left\{  \left\vert V_{a}\right\vert <1\right\}  \cap\Omega_{a}.$ Recall
that $U_{\varepsilon,a}$ is monotone, hence
\[
\partial_{r}V_{a}<0\text{ and }\partial_{z}V_{a}>0\text{ in }\Xi_{a}.
\]

In this section, we show that $V_{a}$ satisfies the free boundary condition
$\left\vert \nabla V_{a}\right\vert =1$ on $\partial\left\{  \left\vert
V_{a}\right\vert <1\right\}  \cap\Omega_{a}$ in the classical sense. Let us
introduce the notation
\[
\digamma_{a}:=\partial\Xi_{a}\cap\Omega_{a}.
\]
We also define
\begin{equation}
\digamma_{a}^{+}=\digamma_{a}\cap\left\{  V_{a}=1\right\}  ,\text{ }%
\digamma_{a}^{-}=\digamma_{a}\cap\left\{  V_{a}=-1\right\}  . \label{CF}%
\end{equation}

To investigate the regularity property of the free boundary $\digamma_{a},$
the first step is to show that the free boundary is nondegenerated in the
sense of $\cite{Alt}.$ We use $B_{\rho}\left(  X\right)  $ to denote the ball
of radius $\rho$ with center $X$ in $\mathbb{R}^{3}.$

\begin{lemma}
Let $x_{0}=\left(  r_{0},z_{0}\right)  \in$ $\digamma_{a}^{+}$ with $z_{0}>0.$
Let $\rho<\frac{1}{2}.$ For any ball $B_{\rho}\subset B_{\frac{z_{0}}{2}%
}\left(  x_{0}\right)  ,$
%if $V_{a}$ is not identically zero in $B_{\rho},$
then
\[
\rho^{-3}\int_{\partial B_{\rho}}V_{a}\geq C>0.
\]

\end{lemma}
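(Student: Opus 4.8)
The plan is to prove this nondegeneracy estimate for the limiting solution $V_a$ by following the classical Alt--Caffarelli argument, but run at the level of the regularized solutions $U_{\varepsilon,a}$ and then passing to the limit $\varepsilon\to 0$. The key point is that near a free boundary point $x_0=(r_0,z_0)\in\digamma_a^+$ with $z_0>0$, we are well inside the domain $\Omega_a$ (the ball $B_{z_0/2}(x_0)$ stays away from the axis $\{r=0\}$ and from the lateral/top boundary $L_a$ for the relevant range of $x_0$), so the PDE $-\Delta u+\tfrac12 F_\varepsilon'(u)=0$ holds in the full ball and we may argue as in the interior case. First I would reduce to showing that, for a fixed small $\rho$, if the spherical average $\rho^{-3}\int_{\partial B_\rho}V_a$ is smaller than a dimensional constant $\sigma$, then $V_a\equiv 1$ in a slightly smaller ball, contradicting $x_0\in\digamma_a^+$ (where $V_a=1$ but $|V_a|<1$ in $\Xi_a$ arbitrarily nearby). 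Equivalently, set $v:=1-V_a\geq 0$; then $v$ is subharmonic-type in the relevant region and the claim is that $v$ cannot have too small an average on $\partial B_\rho$ unless it vanishes.

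The main steps, in order. (i) Work with $u_\varepsilon:=1-U_{\varepsilon,a}$, which satisfies $-\Delta u_\varepsilon = -\tfrac12 F_\varepsilon'(U_{\varepsilon,a}) = \tfrac12 F_\varepsilon'(1-u_\varepsilon)$; near the level $U_{\varepsilon,a}\approx 1$ one has $F_\varepsilon'(s)=2\varepsilon^{-2}(s-1)$, so $u_\varepsilon$ solves $\Delta u_\varepsilon = \varepsilon^{-2} u_\varepsilon$ in the region where $1-\tfrac{\varepsilon}{2}\le U_{\varepsilon,a}\le 1$, and $u_\varepsilon\ge 0$ everywhere. (ii) Introduce $m(\rho):=\rho^{-2}\fint_{\partial B_\rho(x_0)} u_\varepsilon$ (or the unnormalized version $\rho^{-3}\int_{\partial B_\rho}$ up to the surface-area constant), and derive a differential inequality for $m$ using the divergence theorem: $\frac{d}{d\rho}\bigl(\rho^{-2}\fint_{\partial B_\rho} u_\varepsilon\bigr)$ equals $\rho^{-2}\fint_{\partial B_\rho}\partial_\nu u_\varepsilon$ minus the lower-order term, and $\int_{\partial B_\rho}\partial_\nu u_\varepsilon=\int_{B_\rho}\Delta u_\varepsilon \ge 0$ since $u_\varepsilon\ge 0$ and the right-hand side $\tfrac12 F_\varepsilon'(U_{\varepsilon,a})$ has a sign where $U_{\varepsilon,a}\ge \tfrac12$ (using that $\bar F$ is increasing, $F_\varepsilon$ is monotone on $[-1,0]$ and even, so $F_\varepsilon'(s)\ge0$ for $s\ge0$). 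This forces $\rho^{-2}\fint_{\partial B_\rho}u_\varepsilon$ to be monotone in $\rho$ up to controllable error, giving a lower bound propagating from small radii. (iii) Use the nondegeneracy of the approximating one-dimensional profile: by Lemma~\ref{h1}, Lemma~\ref{h2} and $|H_\varepsilon'|\le 1$, in any ball centered on $\{U_{\varepsilon,a}=1-\tfrac\varepsilon2\}$ of radius comparable to $\rho$, the average of $1-U_{\varepsilon,a}$ is bounded below by $c\rho$ uniformly in $\varepsilon$; combined with the monotonicity of $U_{\varepsilon,a}$ in $r,z$ (so the super-level set $\{U_{\varepsilon,a}\ge 1-\tfrac\varepsilon2\}$ occupies a definite fraction of $B_\rho(x_0)$), this yields $\rho^{-3}\int_{\partial B_\rho(x_0)}U_{\varepsilon,a}\ge C>0$ uniformly in $\varepsilon$. (iv) Pass to the limit $\varepsilon\to0$: since $U_{\varepsilon,a}\to V_a$ in $C^{0,\alpha}(\Omega_a)$, the averages converge and the uniform lower bound survives, giving $\rho^{-3}\int_{\partial B_\rho}V_a\ge C>0$.

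The hard part will be step (ii)--(iii): making the argument genuinely uniform in $\varepsilon$. The differential inequality for the spherical average picks up the term $\int_{B_\rho}\tfrac12 F_\varepsilon'(U_{\varepsilon,a})$, which near $U_{\varepsilon,a}=1$ behaves like $\varepsilon^{-2}\int_{B_\rho}(U_{\varepsilon,a}-1)$ — this blows up pointwise but must be shown to be controlled in the averaged sense, precisely because $U_{\varepsilon,a}$ approaches $1$ exponentially on the scale $\varepsilon$ (Lemma~\ref{h2}) so the contribution from the thin transition layer of width $O(\varepsilon)$ is $O(\varepsilon^{-2}\cdot\varepsilon\cdot\varepsilon)=O(1)$, hence harmless. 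The alternative, cleaner route is to avoid $F_\varepsilon'$ entirely: observe $U_{\varepsilon,a}\le 1$ always, so $1-U_{\varepsilon,a}$ is superharmonic where $F_\varepsilon'(U_{\varepsilon,a})\le 0$, i.e. nowhere useful directly; instead use that $\Delta U_{\varepsilon,a}=\tfrac12 F_\varepsilon'(U_{\varepsilon,a})\ge 0$ wherever $U_{\varepsilon,a}\ge 0$, so $U_{\varepsilon,a}$ itself is subharmonic there, hence $\rho\mapsto\fint_{\partial B_\rho}U_{\varepsilon,a}$ is nondecreasing, and then bound the average on a small inner sphere from below using Lemma~\ref{h1} (the profile is at least $(1-\varepsilon)\,$dist to the layer) together with the $r,z$-monotonicity to locate a definite portion of $\{U_{\varepsilon,a}>0\}$ inside $B_\rho(x_0)$. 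I expect the subharmonicity route to be the one to pursue, with the only real care being the verification that $B_{z_0/2}(x_0)\subset\Omega_a$ so that no boundary terms from $L_a$ or the axis intrude — which holds for the free boundary points relevant to the construction since the nodal set $\digamma_a^+$ stays at distance $O(1)$ from $\partial\Omega_a$ for $z_0$ in the range we use.
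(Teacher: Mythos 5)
Your proposal takes a genuinely different route from the paper, but unfortunately that route contains a gap that I do not think can be closed without recovering essentially what the paper does.

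First, a sign error that affects the backbone of step (ii): $F_{\varepsilon}$ is a double-well potential, even, increasing on $[-1,0]$ and therefore \emph{decreasing} on $[0,1]$, so $F_{\varepsilon}'(s)\le 0$ for $s\in[0,1]$. Hence where $U_{\varepsilon,a}\ge 0$ one has $\Delta U_{\varepsilon,a}=\tfrac12 F_{\varepsilon}'(U_{\varepsilon,a})\le 0$: $U_{\varepsilon,a}$ is \emph{super}harmonic there and $1-U_{\varepsilon,a}$ is \emph{sub}harmonic, the opposite of what you wrote in both places. More importantly, even after correcting the sign, the mean-value argument cannot deliver the conclusion. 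Subharmonicity of $v_{\varepsilon}:=1-U_{\varepsilon,a}$ only gives that $\rho\mapsto\fint_{\partial B_{\rho}}v_{\varepsilon}$ is nondecreasing, so $\fint_{\partial B_{\rho}}v_{\varepsilon}\ge v_{\varepsilon}(x_{0})\approx 0$, which is vacuous. The quantity you actually need to control, $\rho^{-1}\fint_{\partial B_{\rho}}v_{\varepsilon}$, satisfies
\[
\frac{d}{d\rho}\Bigl(\rho^{-1}\fint_{\partial B_{\rho}}v_{\varepsilon}\Bigr)
=\rho^{-1}\Bigl(\fint_{\partial B_{\rho}}\partial_{\nu}v_{\varepsilon}-\rho^{-1}\fint_{\partial B_{\rho}}v_{\varepsilon}\Bigr),
\]
whose sign is not determined by $\Delta v_{\varepsilon}\ge 0$ alone. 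Nondegeneracy is not a mean-value-property statement; it is genuinely a statement that rules out the competitor ``collapse $v$ to zero across the whole ball,'' and that requires a variational input.

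Step (iii) is where the proposal is circular. You invoke ``nondegeneracy of the one-dimensional profile'' and the $r,z$-monotonicity to conclude that $1-U_{\varepsilon,a}$ is bounded below by $c\rho$ on a definite part of $B_{\rho}(x_{0})$. But that the actual solution resembles the one-dimensional profile transversally to the free boundary at unit rate is precisely the content of nondegeneracy together with the regularity it feeds into; nothing in the construction so far guarantees it. Monotonicity makes the level sets graphs, but a monotone graph may still be arbitrarily steep, and the value of $U_{\varepsilon,a}$ may a priori approach $1$ at a sublinear rate away from $\digamma_{a}^{+}$. Ruling out that degeneracy is the whole point of the lemma.

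The paper supplies exactly the missing ingredient: a local minimality inequality for $U_{\varepsilon,a}$ in the class of competitors sandwiched between $U_{\varepsilon,a}$ and a vertical translate $U_{\varepsilon,a}(r,z-k)$, obtained by a calibration argument in the spirit of Alberti--Ambrosio--Cabr\'e using the fact that $\{U_{\varepsilon,a}(r,z-k)\}_{k}$ is a monotone foliation. One then passes $\varepsilon\to 0$ (using the lower semicontinuity inequality \eqref{inf}) to get the limiting inequality \eqref{es}, after which the Alt--Caffarelli Lemma~3.4 competitor construction with the log-cut-off $w_{\varepsilon}=\min\{c_{0}g_{\rho/4},\,1-\varepsilon^{2}\}$ and $W_{\varepsilon}=\max\{U_{\varepsilon},w_{\varepsilon}\}$ runs exactly as in the minimizing case. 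Your proposal omits this minimality/calibration step entirely, and I do not see how to make the purely PDE route (even corrected) produce the linear lower bound without re-deriving an equivalent variational inequality.
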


\begin{proof}
Checking the details of the proof of Lemma 3.4 in \cite{Alt}, we find that to
prove this nondegeneracy property, we need to show the local minimizing
property of $V_{a}$, i.e. compare the energy of $V_{a}$ with another carefully
chosen test function larger than $V_{a}$. To do this, we shall use suitable
minimizing property of the function $U_{\varepsilon}$ and sending
$\varepsilon$ to $0.$

Let $B_{\rho}$ be a ball of radius $\rho$ in $B_{\frac{z_{0}}{2}}\left(
x_{0}\right)  .$ For each fixed small $\varepsilon,$ consider the smooth
family of functions $U_{\varepsilon}\left(  r,z-k\right)  $, with
\[
0\leq k<b_{\varepsilon}-z_{0}-\rho,
\]
where $b_{\varepsilon}$ is the constant appeared in $\left(  \ref{b}\right)
.$ Since $U_{\varepsilon}$ is monotone in $z,$ we have
\[
U_{\varepsilon}\left(  r,z-k_{1}\right)  <U_{\varepsilon}\left(
r,z-k_{2}\right)  ,\text{ if \ }k_{1}<k_{2}.
\]
Using this monotone family of functions, we can construct a calibration, using
the theory developed in $\cite{Cabre}.$ The arguments of Theorem 4.5 in
$\cite{Cabre}$ then tell us that
\begin{equation}
\int_{B_{\rho}}\left[  \left\vert \nabla U_{\varepsilon}\right\vert
^{2}+F_{\varepsilon}\left(  U_{\varepsilon}\right)  \right]  \leq\int%
_{B_{\rho}}\left[  \left\vert \nabla\eta\right\vert ^{2}+F_{\varepsilon
}\left(  \eta\right)  \right]  , \label{ener}%
\end{equation}
for any smooth function $\eta$ satisfying $\eta=U_{\varepsilon}$ on $\partial
B_{\rho},$ and
\[
U_{\varepsilon}\leq\eta\leq U_{\varepsilon}\left(  r,z-b_{\varepsilon}%
+z_{0}+\rho\right)  .
\]
We observe that due to monotonicity,
\begin{equation}
U_{\varepsilon}\left(  r,z-b_{\varepsilon}+z_{0}+\rho\right)  \geq
1-\varepsilon^{2}. \label{close}%
\end{equation}

Following Alt-Caffarelli $\left(  \cite{Alt}\right)  ,$ we define%
\[
g_{\beta}\left(  X\right)  =\beta\left(  \ln\left\vert X\right\vert -\ln
\beta\right)  ,
\]%
\[
w_{\varepsilon}\left(  X\right)  =\min\left\{  c_{0}g_{\frac{\rho}{4}}\left(
X-x_{0}\right)  ,1-\varepsilon^{2}\right\}  ,
\]
and let $W_{\varepsilon}=\max\left\{  U_{\varepsilon},w_{\varepsilon}\right\}
.$ Here $c_{0}$ is the maximum constant choose such that $w_{\varepsilon}\leq
U_{\varepsilon}$ on $\partial B_{\rho}.$

Since $U_{\varepsilon}\leq W_{\varepsilon}$ and $U_{\varepsilon}%
=W_{\varepsilon}$ on $\partial B_{\rho},$ by $\left(  \ref{ener}\right)  ,$
\[
\int_{B_{\rho}}\left[  \left\vert \nabla U_{\varepsilon}\right\vert
^{2}+F_{\varepsilon}\left(  U_{\varepsilon}\right)  \right]  \leq\int%
_{B_{\rho}}\left[  \left\vert \nabla W_{\varepsilon}\right\vert ^{2}%
+F_{\varepsilon}\left(  W_{\varepsilon}\right)  \right]  .
\]
On the other hand, for any subdomain $\Omega\subset B_{\rho},$%
\begin{equation}
\int_{\Omega}\left[  \left\vert \nabla V_{a}\right\vert ^{2}+\chi_{\left(
-1,1\right)  }\left(  V_{a}\right)  \right]  \leq\lim\inf_{\varepsilon
\rightarrow0}\int_{\Omega}\left[  \left\vert \nabla U_{\varepsilon}\right\vert
^{2}+F_{\varepsilon}\left(  U_{\varepsilon}\right)  \right]  . \label{inf}%
\end{equation}
Letting $\varepsilon\rightarrow0,$ using $\left(  \ref{inf}\right)  $ in the
region where $w_{\varepsilon}\geq U_{\varepsilon},$ we obtain
\begin{equation}
\int_{B_{\rho}}\left[  \left\vert \nabla V_{a}\right\vert ^{2}+\chi_{\left(
-1,1\right)  }\left(  V_{a}\right)  \right]  \leq\int_{B_{\rho}}\left[
\left\vert \nabla W\right\vert ^{2}+\chi_{\left(  -1,1\right)  }\left(
W\right)  \right]  . \label{es}%
\end{equation}
Once $\left(  \ref{es}\right)  $ is proved, we may proceed as Lemma 3.4 of
\cite{Alt} to conclude the proof.
\end{proof}

Next we study the nondegeneracy around $\digamma_{a}^{-}.$

\begin{lemma}
\label{plus}Let $x_{0}=\left(  r_{0},z_{0}\right)  \in\digamma_{a}^{-}.$
Suppose there exists $\delta>0$ such that%
\[
\digamma_{a}^{-}\cap\left\{  \left(  r,z\right)  :r\in\left[  r_{0}%
-\delta,r_{0}+\delta\right]  \right\}  \subset\left\{  \left(  r,z\right)
:z>2\delta\right\}  .
\]
Then for any ball $B_{\rho}\subset B_{\delta}\left(  x_{0}\right)  ,$ if
$V_{a}$ is not identically zero in $B_{\rho},$ we have
\[
\rho^{-3}\int_{\partial B_{\rho}}V_{a}\geq C>0.
\]

\end{lemma}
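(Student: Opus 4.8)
The plan is to mirror the argument for $\digamma_a^+$ given in the previous lemma, but now from \emph{below} the free boundary point, exploiting the monotonicity of $U_\varepsilon$ in $z$ in the opposite direction together with the geometric hypothesis that the negative free boundary near $x_0$ stays above $\{z > 2\delta\}$. First I would record the key structural fact: since $\partial_z U_\varepsilon > 0$ in $\Omega_a$, the downward translates $U_\varepsilon(r, z+k)$ form a monotone family, $U_\varepsilon(r,z+k_1) > U_\varepsilon(r,z+k_2)$ for $k_1 < k_2$. The geometric hypothesis on $\digamma_a^-$ guarantees that for $k$ ranging over a suitable interval $0 \le k < k_*$ (with $k_*$ comparable to $\delta$), the translate $U_\varepsilon(r,z+k)$ is, inside $B_\rho \subset B_\delta(x_0)$, pushed below $-1+\varepsilon^2$; this is the analogue of $\left(\ref{close}\right)$ and is exactly where the hypothesis that the relevant portion of $\digamma_a^-$ lies in $\{z > 2\delta\}$ is used — it ensures the nodal region $\{|V_a| < 1\}$ near $x_0$ is bounded away, after a small downward shift, from the ball.

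Next, using this monotone family I would invoke the calibration machinery of \cite{Cabre} (Theorem 4.5 there) to conclude the one-sided minimality
\[
\int_{B_\rho}\left[\left\vert \nabla U_\varepsilon\right\vert^2 + F_\varepsilon(U_\varepsilon)\right] \le \int_{B_\rho}\left[\left\vert \nabla \eta\right\vert^2 + F_\varepsilon(\eta)\right]
\]
for every smooth $\eta$ with $\eta = U_\varepsilon$ on $\partial B_\rho$ and $U_\varepsilon(r,z+k_*) \le \eta \le U_\varepsilon$; that is, $V_a$ is locally minimizing among competitors lying \emph{below} it. Then I would choose the comparison function by the Alt--Caffarelli recipe adapted to the negative phase: with $g_\beta(X) = \beta(\ln|X| - \ln\beta)$, set $w_\varepsilon(X) = \max\{-c_0 g_{\rho/4}(X - x_0),\, -1+\varepsilon^2\}$ and $W_\varepsilon = \min\{U_\varepsilon, w_\varepsilon\}$, with $c_0$ the largest constant making $w_\varepsilon \ge U_\varepsilon$ on $\partial B_\rho$ (so that $W_\varepsilon = U_\varepsilon$ on $\partial B_\rho$ and $W_\varepsilon \le U_\varepsilon$ inside). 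Passing to the limit $\varepsilon \to 0$, using lower semicontinuity of the energy where $w_\varepsilon \le U_\varepsilon$ exactly as in $\left(\ref{inf}\right)$–$\left(\ref{es}\right)$, yields
\[
\int_{B_\rho}\left[\left\vert \nabla V_a\right\vert^2 + \chi_{(-1,1)}(V_a)\right] \le \int_{B_\rho}\left[\left\vert \nabla W\right\vert^2 + \chi_{(-1,1)}(W)\right],
\]
and from here the nondegeneracy estimate $\rho^{-3}\int_{\partial B_\rho} V_a \ge C$ follows by the computation in Lemma 3.4 of \cite{Alt} (applied to $-V_a$, which is then a nonnegative subsolution-type object near its zero set).

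The main obstacle I anticipate is verifying that the downward translate $U_\varepsilon(r, z+k_*)$ really does fall below $-1+\varepsilon^2$ throughout $B_\rho$: this requires combining the monotonicity with the geometric hypothesis \emph{uniformly in $\varepsilon$}, and one must be careful that the nodal set $\{|V_a| < 1\}$ is thin enough in the $z$-direction near $x_0$ so that a shift of size $O(\delta)$ clears the ball. The hypothesis $\digamma_a^- \cap \{r \in [r_0-\delta, r_0+\delta]\} \subset \{z > 2\delta\}$ is precisely what rules out the degenerate situation where $\digamma_a^-$ touches $z = 0$ (where the Neumann condition would interfere) or where the negative phase is too "fat" vertically; one also needs the caveat "$V_a$ not identically zero in $B_\rho$" to exclude the trivial branch $V_a \equiv 0$ that the two-phase problem $\left(\ref{Int}\right)$ admits. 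A secondary technical point is that the calibration argument of \cite{Cabre} must be checked to apply with the translation family going in the decreasing direction and with the mixed boundary data on $L_a$ not entering, since $B_\rho \subset B_\delta(x_0)$ sits in the interior; this should be routine given that the cited theorem is stated for exactly this kind of ordered family.
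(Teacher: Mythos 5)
Your high-level strategy is indeed the same as the paper's: consider downward translates $U_{\varepsilon}\left(r,z+k\right)$, invoke the calibration of \cite{Cabre} for one-sided minimality among competitors below $U_\varepsilon$, and then adapt the Alt--Caffarelli barrier construction to the negative phase. The paper does exactly this in the first half of its proof.

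However, your proposal has a genuine gap, and it is precisely the issue you flag as an ``obstacle you anticipate'' but do not resolve. You assert that the geometric hypothesis on $\digamma_a^-$ ``guarantees'' that the translate $U_\varepsilon(r,z+k_*)$ is pushed below $-1+\varepsilon^2$ in $B_\rho$. This does not follow. The hypothesis constrains the limiting free boundary $\digamma_a^-$ of $V_a$, not the level sets of $U_\varepsilon$ for $\varepsilon>0$; even though $U_\varepsilon\to V_a$ in $C^{0,\alpha}$, there is no quantitative rate that gives $U_\varepsilon\le -1+\varepsilon^2$ at distance $O(\delta)$ below $\digamma_a^-$. The paper explicitly remarks: ``While in Lemma \ref{plus} we know from $\left(\ref{close}\right)$ that the function $U_\varepsilon(r,z-b_\varepsilon+z_0+\rho)$ is close enough to $1$, we do not have similar estimate for $U_\varepsilon(r,z+z_*-\rho)$ up to now.'' The reason $\left(\ref{close}\right)$ works in the $\digamma_a^+$ case is that the translate there is by the full vertical extent of the domain, landing on $L_{2,a}$ where the boundary data $\omega$ pins $U_\varepsilon$ exponentially close to $1$. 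Nothing analogous is available below $\digamma_a^-$.

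The entire second half of the paper's proof is devoted to closing this gap. Instead of $U_\varepsilon(r,z+z_*-\rho)\le -1+\varepsilon^2$, the paper proves the weaker statement $\left(\ref{0}\right)$, namely $F_\varepsilon\left[U_\varepsilon(r,z+z_*-\rho)\right]\to 0$ in $B_\rho$ as $\varepsilon\to 0$, which suffices for the limiting energy comparison. This is done in two steps: first a measure estimate $\left(\ref{K}\right)$, proved by a barrier/comparison argument with a function $\phi$ satisfying $\Delta\phi = K$ (using $U_\varepsilon \ge -1$ to derive a contradiction); then the pointwise convergence $\left(\ref{zero}\right)$, proved by contradiction via the maximum principle applied to the would-be harmonic limit $V_a$ on a subregion $\Lambda^*$. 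Your proposal contains none of this content. A correct write-up must supply an argument of this type (or a genuinely different substitute) rather than taking the $-1+\varepsilon^2$ bound for granted.

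One smaller imprecision: the $\max/\min$ in your barrier $w_\varepsilon = \max\{-c_0 g_{\rho/4},\,-1+\varepsilon^2\}$ and $W_\varepsilon = \min\{U_\varepsilon, w_\varepsilon\}$ is the right mirror image of the Alt--Caffarelli construction, but for the constrained minimality to apply you need $W_\varepsilon\ge U_\varepsilon(r,z+k_*)$ pointwise in $B_\rho$, and this reduces again to the unproven bound above. If instead you only know $F_\varepsilon[U_\varepsilon(r,z+k_*)]\to 0$, the choice of the truncation level in $w_\varepsilon$ must be relaxed from $-1+\varepsilon^2$ to $\sup_{B_\rho}U_\varepsilon(r,z+k_*)$ and the limit analysis adjusted accordingly, which is how the paper's version goes through.
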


\begin{proof}
Let $B_{\rho}$ be the ball of radius $\rho$ in $B_{\delta}\left(
x_{0}\right)  $ with center $\left(  r_{\ast},z_{\ast}\right)  .$ Consider the
family of functions $U_{\varepsilon}\left(  r,z-k\right)  ,$ with $-\left(
z_{\ast}-\rho\right)  <k\leq0.$ Due to monotonicity,
\[
U_{\varepsilon}\left(  r,z-k_{1}\right)  \leq U_{\varepsilon}\left(
r,z-k_{2}\right)  ,\text{ if }k_{1}<k_{2}.
\]
The same arguments as Lemma \ref{plus} yield
\[
\int_{B_{\rho}}\left[  \left\vert \nabla U_{\varepsilon}\right\vert
^{2}+F_{\varepsilon}\left(  U_{\varepsilon}\right)  \right]  \leq\int%
_{B_{\rho}}\left[  \left\vert \nabla\eta\right\vert ^{2}+F_{\varepsilon
}\left(  \eta\right)  \right]  ,
\]
for any smooth function $\eta$ satisfying $\eta=U_{\varepsilon}$ on $\partial
B_{\rho},$ and
\[
U_{\varepsilon}\left(  r,z+z_{\ast}-\rho\right)  \leq\eta\leq U_{\varepsilon
}\text{ in }B_{\rho}.
\]
While in Lemma \ref{plus} we know from $\left(  \ref{close}\right)  $ that the
function $U_{\varepsilon}\left(  r,z-b_{\varepsilon}+z_{0}+\rho\right)  $ is
close enough to $1,$ we do not have similar estimate for $U_{\varepsilon
}\left(  r,z+z_{\ast}-\rho\right)  $ up to now. Nevertheless, we would like to
show
\begin{equation}
F_{\varepsilon}\left[  U_{\varepsilon}\left(  r,z+z_{\ast}-\rho\right)
\right]  \rightarrow0\text{ in }B_{\rho},\text{ as }\varepsilon\rightarrow0.
\label{0}%
\end{equation}
Once this is proved, the rest of the proof is same as Lemma \ref{plus}.

For each $r\in\left[  r_{0}-\delta,r_{0}+\delta\right]  ,$ we define%
\[
d\left(  r\right)  =\inf\left\{  z:\left(  r,z\right)  \in\digamma
^{-}\right\}  ,
\]
and
\[
\Lambda=\left\{  \left(  r,z\right)  :r\in\left[  r_{0}-\delta,r_{0}%
+\delta\right]  ,z<d\left(  r\right)  \right\}  .
\]
The measure of a set $S$ will be denoted by $\left\vert S\right\vert .$

We claim that for each fixed constant $K>0$%
\begin{equation}
\lim_{\varepsilon\rightarrow0}\left\vert \Lambda\cap\left\{  \left\vert
F_{\varepsilon}^{\prime}\left(  U_{\varepsilon}\right)  \right\vert
>K\right\}  \right\vert =0. \label{K}%
\end{equation}
Suppose this were not true. Then we could find a subsequence $\left\{
\varepsilon_{n}\right\}  $ tending to $0,$ and $r_{1},r_{2},z_{1},z_{2},$
depending on $\varepsilon_{n},$ such that
\begin{equation}
F_{\varepsilon_{n}}^{\prime}\left(  U\left(  r,z\right)  \right)  >K,\text{
for }\left(  r,z\right)  \in D:=\left(  r_{1},r_{2}\right)  \times\left(
z_{1},z_{2}\right)  \subset\Lambda. \label{case1}%
\end{equation}
Moreover, we could assume $\left\vert z_{2}-z_{1}\right\vert =\left\vert
r_{2}-r_{1}\right\vert =\delta>0,$ where $\delta$ is independent of
$\varepsilon.$ Then in the region $D,$ $\Delta U_{\varepsilon_{n}%
}=F_{\varepsilon_{n}}^{\prime}\left(  U_{\varepsilon_{n}}\right)  \geq K.$ Let
$\phi$ be a function satisfying
\[
\Delta\phi=K\text{ in }D,\text{ }\phi=U_{\varepsilon_{n}}\text{ on }\partial
D.
\]
Then
\[
-\Delta\left(  U_{\varepsilon_{n}}-\phi\right)  \leq0\text{ in }D,\text{
}U_{\varepsilon_{n}}-\phi\leq0\text{ on }\partial D.
\]
Hence $U_{\varepsilon_{n}}\leq\phi$ in $D.$ In view of the fact that
$U_{\varepsilon_{n}}\rightarrow1$ in $D$ as $\varepsilon_{n}\rightarrow0,$ we
get $\phi<-1$ at the center of $D.$ This contradicts with the fact that
$U_{\varepsilon}\geq-1.$

To prove $\left(  \ref{0}\right)  ,$ we first show that for each fixed
$\left(  r^{\ast},z^{\ast}\right)  \in\Lambda$,
\begin{equation}
\lim_{\varepsilon\rightarrow0}F_{\varepsilon}\left(  U_{\varepsilon}\left(
r^{\ast},z^{\ast}\right)  \right)  =0. \label{zero}%
\end{equation}
Assume to the contrary that
\[
\lim\sup_{\varepsilon\rightarrow0}F_{\varepsilon}\left(  U_{\varepsilon
}\left(  r^{\ast},z^{\ast}\right)  \right)  =\xi>0.
\]
Then using $\left(  \ref{K}\right)  ,$ we could infer that in the region
$\Lambda^{\ast}:=\left\{  \left(  r,z\right)  :\left(  r,z\right)  \in
\Lambda,r<r^{\ast},z>z^{\ast}\right\}  ,$ $\Delta U_{\varepsilon}$ converges
pointwise to $0.$ Hence
\[
\Delta V_{a}=0\text{ in }\Lambda^{\ast}.
\]
This contradicts with the maximum principle. Hence we get $\left(
\ref{0}\right)  .$

We remark that once $\left(  \ref{zero}\right)  $ is proven, we can show
exponentially decay to $-1$ in $\Lambda,$ away from the free boundary points.
\end{proof}

Having obtained sufficiently fast decay to $\pm1$ away from the free boundary,
we prove that $V_{a}$ is a variational solution (see \cite{Weiss3} on a
discussion on this topic).

\begin{lemma}
$V_{a}$ is a variational solution in the sense that%
\begin{equation}
\int_{\Omega}\left\{  \left(  \left\vert \nabla V_{a}\right\vert ^{2}%
+\chi_{\left(  -1,1\right)  }\left(  V_{a}\right)  \right)  \operatorname{div}%
\phi-2\nabla V_{a}D\phi\left(  \nabla V_{a}\right)  ^{T}\right\}  =0.
\label{va}%
\end{equation}
for any $\phi\in C_{c}^{\infty}\left(  \Omega,\mathbb{R}^{3}\right)  .$
\end{lemma}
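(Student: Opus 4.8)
The plan is to derive the variational (domain variation / inner variation) identity $\eqref{va}$ for $V_a$ by passing to the limit in the corresponding identity for the regularized solutions $U_\varepsilon$. First I would recall that each $U_\varepsilon$ solves the smooth semilinear equation $\Delta U_\varepsilon = \tfrac12 F_\varepsilon'(U_\varepsilon)$ classically in $\Omega_a$. Testing this equation against $\nabla U_\varepsilon \cdot \phi$ for $\phi \in C_c^\infty(\Omega,\mathbb{R}^3)$ (so that $\phi$ is supported away from $\partial\Omega_a$, where the Neumann/Dirichlet data live) and integrating by parts in the standard way produces the exact Pohozaev-type identity
\begin{equation}
\int_\Omega \left\{ \left( |\nabla U_\varepsilon|^2 + F_\varepsilon(U_\varepsilon) \right) \operatorname{div}\phi - 2 \nabla U_\varepsilon D\phi (\nabla U_\varepsilon)^T \right\} = 0 .
\end{equation}
This is the "stationarity under inner variations'' identity and holds for every $\varepsilon>0$ with no boundary terms, precisely because $\phi$ has compact support in $\Omega$.

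Next I would pass to the limit $\varepsilon \to 0$ in each of the three terms. The gradient term: from Section~\ref{Sec3} we already know $|\nabla U_\varepsilon| \le C$ uniformly and $U_\varepsilon \to V_a$ in $C^{0,\alpha}_{loc}$; moreover $V_a$ is harmonic in $\Xi_a = \{|V_a|<1\}\cap\Omega_a$, so by elliptic estimates $\nabla U_\varepsilon \to \nabla V_a$ locally uniformly inside $\Xi_a$. Away from the free boundary $\digamma_a$, on the side where $|V_a|=1$, the decay estimates established in Lemma~\ref{plus} and its companion (exponential decay of $1-|U_\varepsilon|$ to $0$, hence $\nabla U_\varepsilon \to 0$ there) show that $\nabla U_\varepsilon \to 0 = \nabla V_a$. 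So $|\nabla U_\varepsilon|^2 \to |\nabla V_a|^2$ and $\nabla U_\varepsilon D\phi(\nabla U_\varepsilon)^T \to \nabla V_a D\phi(\nabla V_a)^T$ pointwise a.e.\ in $\Omega$, and these are dominated by $C^2|D\phi| \in L^1$, so dominated convergence handles the first and third terms. For the potential term I would use that $F_\varepsilon(U_\varepsilon) \to \chi_{(-1,1)}(V_a)$ a.e.: where $|V_a|<1$ one has $F_\varepsilon(U_\varepsilon)\to 1$ since $F_\varepsilon \to 1$ on compact subintervals, and where $|V_a|=1$ the decay estimates give $F_\varepsilon(U_\varepsilon)\to 0$; since $0\le F_\varepsilon \le 1$, dominated convergence again applies. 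The one point requiring care is that the free boundary $\digamma_a$ has Lebesgue measure zero, so the a.e.\ convergence of the integrands is genuinely a.e.; this follows because $V_a$ is harmonic on $\Xi_a$ and $=\pm1$ on an open set on the other side, so $\digamma_a$ is the boundary of an open set and, being the zero set of a real-analytic-in-$\Xi_a$ function near each of its points plus the constancy on the other side, it is negligible (one can also invoke the nondegeneracy lemmas above, which show $\{|V_a|=1\}$ has no interior measure concentrating on $\digamma_a$).

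The main obstacle is the potential term and, more precisely, justifying that no energy is "lost'' in the limit across the free boundary — i.e.\ that $\liminf_{\varepsilon\to0}\int F_\varepsilon(U_\varepsilon)\operatorname{div}\phi$ actually equals $\int \chi_{(-1,1)}(V_a)\operatorname{div}\phi$ rather than merely dominating it. This is exactly what the quantitative decay estimates of Lemma~\ref{plus} (equations~\eqref{0}, \eqref{zero}, and the remark on exponential decay) and the analogous statement near $\digamma_a^+$ are designed to supply: they upgrade the weak lower-semicontinuity \eqref{inf} to genuine convergence of the potential integrals, because $F_\varepsilon(U_\varepsilon)$ is trapped between $0$ and $1$ and is exponentially small outside any neighborhood of $\digamma_a$, while inside $\Xi_a$ it tends to $1$. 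Assembling these three limits in the $\varepsilon$-level identity displayed above yields \eqref{va}, completing the proof. I would close by noting that $\phi$ ranging over $C_c^\infty(\Omega,\mathbb{R}^3)$ (compactly supported in the interior) is exactly what lets us ignore all boundary contributions on $L_a$ and on the Neumann faces $\{r=0\}$, $\{z=0\}$.
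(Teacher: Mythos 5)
Your proposal is correct and takes essentially the same route as the paper: the paper also writes down the exact domain-variation (Pohozaev/Noether) identity for the smooth regularized solution $U_\varepsilon$ — which holds with no boundary terms for $\phi\in C_c^\infty(\Omega,\mathbb{R}^3)$ — and then lets $\varepsilon\to0$, invoking the uniform gradient bound and the exponential decay of $\nabla U_\varepsilon$ (and of $F_\varepsilon(U_\varepsilon)$) away from the free boundary. You merely spell out more carefully the term-by-term a.e.\ convergence, the measure-zero nature of $\digamma_a$, and the dominated-convergence step that the paper leaves implicit; incidentally, your integration-by-parts derivation fixes a small typo in the paper's displayed divergence identity, which is missing the term $-2(\nabla U_\varepsilon\cdot\phi)\nabla U_\varepsilon$ inside the divergence.
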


\begin{proof}
Since $U_{\varepsilon}$ is $C^{1}$ and
\[
\operatorname{div}\left[  \left(  \left\vert \nabla U_{\varepsilon}\right\vert
^{2}+F_{\varepsilon}\left(  U_{\varepsilon}\right)  \right)  \phi\right]
=\left(  \left\vert \nabla U_{\varepsilon}\right\vert ^{2}+F_{\varepsilon
}\left(  U_{\varepsilon}\right)  \right)  \operatorname{div}\phi-2\nabla
U_{\varepsilon}D\phi\left(  \nabla U_{\varepsilon}\right)  ^{T},
\]
we have%
\[
\int_{\Omega_{a}}\left\{  \left(  \left\vert \nabla U_{\varepsilon}\right\vert
^{2}+F_{\varepsilon}\left(  U_{\varepsilon}\right)  \right)
\operatorname{div}\phi-2\nabla U_{\varepsilon}D\phi\left(  \nabla
U_{\varepsilon}\right)  ^{T}\right\}  =0.
\]
Letting $\varepsilon\rightarrow0,$ using the fact that $\nabla U_{\varepsilon
}$ is uniformly bounded with respect to $\varepsilon$ and the exponential
decay to $0$ away from the free boundary, we get the desired result.
\end{proof}

\begin{lemma}
$\digamma_{a}^{\pm}$ is a smooth curve away from the origin, and $V_{a}$ is a
solution of the free boundary problem%
\[
\left\{
\begin{array}
[c]{l}%
\Delta V_{a}=0\text{ in }\Xi_{a},\\
\left\vert \nabla V_{a}\right\vert =1\text{ on }\digamma_{a}^{\pm}.
\end{array}
\right.
\]
Moreover, the energy of $V_{a}$ has the following lower bound estimate:
\begin{equation}
J\left(  V_{a}\right)  =\int_{\Omega_{a}}\left[  \left\vert \nabla
V_{a}\right\vert ^{2}+\chi_{\left(  -1,1\right)  }\left(  V_{a}\right)
\right]  \geq2a^{2}+\frac{k^{2}}{20}\ln a-C. \label{V}%
\end{equation}

\end{lemma}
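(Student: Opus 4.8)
The plan is to combine the free-boundary regularity theory for two-phase problems of Alt-Caffarelli-Friedman / Caffarelli-Salsa with the convergence and monotonicity information we already have for $V_a$. First I would recall that by the previous lemmas $V_a$ is a variational solution in the sense of \eqref{va}, and moreover satisfies the nondegeneracy estimates near both $\digamma_a^+$ and $\digamma_a^-$ established in the two preceding lemmas (away from the origin, where monotonicity forces the geometric hypothesis of Lemma \ref{plus} to hold). The combination of (i) $|\nabla V_a|\le C$, (ii) nondegeneracy, and (iii) the variational-solution identity is precisely the hypothesis package in Weiss's monotonicity-formula approach and in the Caffarelli-Salsa viscosity-solution framework; it yields that $\digamma_a$ has locally finite perimeter, that the ``regular part'' of the free boundary is locally $C^{1,\alpha}$, and that on this regular part the free boundary condition $|\nabla V_a|=1$ holds classically.

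The key extra input needed to upgrade ``regular part is smooth'' to ``$\digamma_a^{\pm}$ is entirely smooth away from the origin'' is the monotonicity $\partial_r V_a<0$, $\partial_z V_a>0$ in $\Xi_a$. This monotonicity implies that each free boundary component $\digamma_a^+=\{V_a=1\}\cap\Omega_a$ and $\digamma_a^-=\{V_a=-1\}\cap\Omega_a$ is a graph: for fixed $r$, the set where $V_a=\pm1$ is an interval endpoint in $z$, so $\digamma_a^\pm$ is the graph of a monotone function $z=g_a^\pm(r)$. A graphical free boundary with a one-sided nondegeneracy cannot have a singular point of the Alt-Caffarelli type (those are cone-like in all directions), so every point of $\digamma_a^\pm$ away from the origin is a regular point, hence the graph functions $g_a^\pm$ are $C^{1,\alpha}$, and then elliptic bootstrapping (the boundary Harnack / hodograph transform argument, as in \cite{Cs,Ca2}) gives smoothness. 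Interior harmonicity of $V_a$ in $\Xi_a$ is immediate since $F_\varepsilon\to 1$ on compact subsets of $(-1,1)$, and Hopf's lemma together with the classical free boundary condition gives that $V_a$ is a genuine solution of the stated free boundary system.

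For the energy lower bound \eqref{V}, I would pass to the limit in the mountain-pass lower bound of Proposition \ref{moun}. Since $U_{\varepsilon,a}$ is the mountain-pass critical point with $E(U_{\varepsilon,a})=c^\ast\ge \tfrac12 a^2 e_\varepsilon+\tfrac{k^2}{20}\ln a-C$, and since $U_{\varepsilon,a}\to V_a$ in $C^{0,\alpha}$ with $F_\varepsilon(U_{\varepsilon,a})\to\chi_{(-1,1)}(V_a)$ in the sense of lower semicontinuity of the energy (this is exactly the inequality \eqref{inf} used locally, now applied on all of $\Omega_a$ using the exponential decay to $\pm1$ away from the free boundary to control the ``tail'' where $F_\varepsilon$ is not yet close to its limit), we get
\[
J(V_a)=\int_{\Omega_a}\bigl[|\nabla V_a|^2+\chi_{(-1,1)}(V_a)\bigr]\le \liminf_{\varepsilon\to0}E(U_{\varepsilon,a}),
\]
wait — we need the reverse inequality. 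In fact lower semicontinuity gives $J(V_a)\le\liminf E(U_{\varepsilon,a})$, and the desired bound \eqref{V} is a \emph{lower} bound on $J(V_a)$, so I must instead re-run the area/coarea estimate of Proposition \ref{moun} directly on $V_a$: the monotonicity of $V_a$ gives the area lower bounds $A(s)\ge\tfrac12 a^2-C$ and, via the catenoid-comparison Lemma \ref{e1}, the refined bound in the negative phase, and the coarea formula $\int_{\Omega_a}[|\nabla V_a|^2+\chi_{(-1,1)}(V_a)]\ge 2\int_{-1}^1 A(s)\,ds$ (here $e_\varepsilon$ is replaced by its limit $4$, and $\int_{-1}^1 ds=2$, giving the $2a^2$ and the $\tfrac{k^2}{20}\ln a$ terms) yields \eqref{V} verbatim.

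The main obstacle I anticipate is establishing the classical free boundary condition $|\nabla V_a|=1$ on $\digamma_a^\pm$ with sufficient care at the two-phase interface structure: one must check that $V_a$ does not develop a ``branch'' where both $\{V_a=1\}$ and $\{V_a=-1\}$ touch (ruled out by continuity plus the fact that $V_a$ is continuous and equals $\pm1$ on the respective sides, so the two level sets are separated by the open set $\{|V_a|<1\}$ which is nonempty by nondegeneracy), and that the nondegeneracy estimates — proved only under the geometric separation hypothesis in Lemma \ref{plus} — indeed hold at every point of $\digamma_a^-$ away from the origin. This last point follows because the graph property forces $\digamma_a^-$ locally to lie above any horizontal line through a lower point, which is exactly the hypothesis of Lemma \ref{plus} for $\delta$ small; away from $\{z=0\}$ and the origin this is automatic, and the behavior at $z=0$ is governed by the Neumann condition $\partial_z V_a(r,0)=0$ which by reflection reduces to an interior regular point.
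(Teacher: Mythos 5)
Your regularity argument follows the paper's approach in spirit but the key mechanism is phrased too heuristically. Your claim that ``a graphical free boundary with one-sided nondegeneracy cannot have a singular point of the Alt-Caffarelli type (those are cone-like in all directions)'' does not withstand scrutiny as stated: the Alt-Caffarelli cone's free boundary $z=\pm\alpha_n|x'|$ is locally a graph over $r$ near any non-vertex point, so graphicality alone does not rule out cone-type singularities. The paper's argument is sharper and uses two separate cases. For a free boundary point off the $z$-axis, the blow-up inherits an extra translation invariance (the angular direction becomes flat), so the blow-up cone depends on only two variables and is therefore a half-plane, giving regularity by the classical theory. For a point on the $z$-axis away from the origin, the blow-up is axially symmetric and must be the Alt-Caffarelli cone; this is excluded because the cone solution is even in $z$, contradicting the monotonicity $\partial_z V_a\geq 0$ preserved in the blow-up. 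Your bootstrap step once $C^{1,\alpha}$ is established is fine.

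For the energy bound, you correctly flagged the lower-semicontinuity trap, but your pivot introduces a genuine gap. Re-running the coarea argument of Proposition \ref{moun} directly on $V_a$ requires controlling the loss term $\tfrac12\int r^2\sqrt{F_\varepsilon(s)}\,ds$ produced by Lemma \ref{e1}; in the proposition this control came from selecting the path element that vanishes at the specific point $\left(k,\tfrac{k}{10}\ln a\right)$ and then running the Case 1 / Case 2 dichotomy on $\int_0^a\phi(r)\,r\,dr$. Neither ingredient is available for $V_a$ itself at this stage: you do not yet know where the nodal set of $V_a$ meets $\{z=0\}$ (the uniform distance bound, Lemma \ref{dis}, comes later and in fact invokes the estimate \eqref{V}, so appealing to it would be circular). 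The paper's route sidesteps this by upgrading your liminf to an equality: the uniform Lipschitz bound on $U_{\varepsilon,a}$, together with the exponential decay of $\nabla U_{\varepsilon,a}$ to zero in $\Omega_a\setminus\{|V_a|\leq 1\}$ away from the free boundary (established in the nondegeneracy lemmas), gives pointwise a.e. convergence of both $|\nabla U_{\varepsilon,a}|^2$ and $F_\varepsilon(U_{\varepsilon,a})$ with an $L^1$ dominant, so dominated convergence yields $J(V_a)=\lim_{\varepsilon\to 0}E(U_{\varepsilon,a})$. Combining this with $E(U_{\varepsilon,a})=c^\ast\geq\tfrac12 a^2 e_\varepsilon+\tfrac{k^2}{20}\ln a-C$ and $e_\varepsilon\to 4$ gives \eqref{V} directly, with no need to re-derive the area lower bounds for $V_a$.
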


\begin{proof}
Let $X\in\digamma_{a}^{\pm}$. Suppose first of all that $X$ is not on the $z$
axis. Since $V_{a}$ is nondegenerated and a variational solution, the Weiss
monotonicity formula(\cite{W,Weiss2}) and a standard blow up analysis tell us
that the blow up limit around $X$ is a cone. Due to rotational symmetry around
the $z$ axis, this is a two dimensional cone. Hence it must be trivial. Then
the usual regularity theory(\cite{Ca1,Ca2,Ca3}) of free boundary tells us that
around $X$ the free boundary is analytic. Now suppose $X$ is on the $z$ axis
and is not the origin. The blow up limit around $X$ will be the cone $\left(
\ref{cone}\right)  ,$ this contradicts with the monotonicity of $V_{a}$ in the
$z$ direction.

In view of the exponential decay of $\nabla U_{\varepsilon,a}$ to $0$ in
$\Omega_{a}\backslash\left\{  \left\vert V_{a}\right\vert \leq1\right\}  $
away from the free boundary, we know that $\nabla U_{\varepsilon,a}$ converges
almost everywhere to $\nabla V_{a}$. Dominated converges theorem then yields%
\[
\int_{\Omega_{a}}\left[  \left\vert \nabla V_{a}\right\vert ^{2}+\chi_{\left(
-1,1\right)  }\left(  V_{a}\right)  \right]  =\lim_{\varepsilon\rightarrow
0}\int_{\Omega_{a}}\left[  \left\vert \nabla U_{\varepsilon}\right\vert
^{2}+F_{\varepsilon}\left(  U_{\varepsilon}\right)  \right]  \geq2a^{2}%
+\frac{k^{2}}{20}\ln a-C.
\]
This is $\left(  \ref{V}\right)  .$
\end{proof}

\section{Asymptotic analysis of $\left\{  V_{a}\right\}  \label{Sec4}$}

In this section, we show that as $a\rightarrow+\infty,$ up to a subsequence,
$V_{a}$ converges to a solution $W_{k}$ of the free boundary problem \eqref{EN}.

We will also have some information of the asymptotic behavior of $W_{k}$ as
$r$ tends to infinity. We will need the following

\begin{lemma}
\label{mean curvature}Let $u$ be a solution to $\left(  \ref{EN}\right)  $,
with smooth free boundary. Then the mean curvature of the surface
$\partial\left\{  \left\vert u\right\vert <1\right\}  $ is nonnegative, with
respect to the unit normal pointing outwards of $\left\{  \left\vert
u\right\vert <1\right\}  .$
\end{lemma}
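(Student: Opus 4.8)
The plan is to exploit the variational structure of the problem together with the monotone families of translates $V_a(r,z-k)$ that were already used to establish the nondegeneracy lemmas in Section~\ref{Sec3}. The key observation is that a solution $u$ of \eqref{EN} with smooth free boundary is, in a suitable sense, a local minimizer of $J_1$ among competitors squeezed between $u$ and any of its translates in the $z$-direction; this is precisely the calibration argument of \cite{Cabre} invoked in the proof of the nondegeneracy lemmas. Passing to the limit $a\to+\infty$, the solution $W_k$ inherits from $V_a$ the monotonicity $\partial_z W_k>0$, and hence the one-parameter family $\{W_k(r,z-t):t\ge 0\}$ is ordered. One then shows that the free boundary $\partial\{|W_k|<1\}$, being a critical point of area-type energy in the ambient sense dictated by the free boundary condition $|\nabla u|=1$, satisfies a second variation / first variation identity that forces the mean curvature to have a sign.

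More concretely, first I would record the local structure: away from the $z$-axis the free boundary is smooth (analytic) by the regularity theory already cited, so the mean curvature $H$ is well defined there. Next I would derive the geometric meaning of $H$ by differentiating the free boundary condition along the flow of translates. Since $\phi:=\partial_z u$ is a positive harmonic function in $\{|u|<1\}$ vanishing on $\partial\{|u|<1\}$ (this uses the monotonicity carried over from $V_a$), the Hopf lemma gives $\partial_\nu\phi<0$ on the free boundary, where $\nu$ is the inward normal to $\{|u|<1\}$ on the component $\{u=1\}$ (and the symmetric statement on $\{u=-1\}$). Differentiating $|\nabla u|^2=1$ tangentially along the free boundary, together with $\Delta u=0$, expresses $H$ in terms of $\partial_\nu(|\nabla u|^2)$ and hence in terms of the sign of $\partial_z u$ and $\partial_\nu\partial_z u$; the outcome is that $H\ge 0$ with respect to the outward normal of $\{|u|<1\}$. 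Equivalently and perhaps more cleanly, I would use that $u=1$ on $\digamma^+$ together with the free boundary condition to write, in boundary normal coordinates, $u = \operatorname{dist}(\cdot,\digamma^+)\mapsto$ a function whose expansion $u = 1 - d + \tfrac12 H d^2 + O(d^3)$ must be subharmonic (since $\Delta u=0$ and the profile is compared with the one-dimensional solution $\mathcal H$); matching the Laplacian in these coordinates, $0=\Delta u = -H + O(d)$ at $d=0$ is too crude, so instead one compares with the translated solution: $u(r,z-t)\le u(r,z)$ for $t>0$ forces the free boundary to move in the direction of decreasing $z$, i.e.\ the region $\{|u|<1\}$ is \emph{swept out} monotonically, and a domain swept monotonically by a family of solutions of the \emph{same} overdetermined problem must have boundary of nonnegative mean curvature — this is the maximum principle for the free boundary (a Serrin-type / sliding argument).

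The cleanest route, and the one I would actually write, is the sliding argument: fix $X_0\in\digamma^+$ away from the $z$-axis, and consider the translates $W_k(r,z-t)$ for small $t>0$. By monotonicity $\partial_z W_k>0$ these are strictly ordered, so $\{|W_k(\cdot,\cdot-t)|<1\}\subset\{|W_k|<1\}$ and the two free boundaries do not touch for $t>0$; letting $t\to 0^+$, the touching at $t=0$ occurs along the whole free boundary, and the standard boundary-point analysis (Hopf lemma applied to the difference $W_k(r,z-t)-W_k(r,z)$, which is a nonpositive function vanishing at the contact point and satisfying the linearized equation) shows that at the contact point the principal curvatures of the two free boundaries satisfy the expected inequality, which in the limit $t\to0^+$ is exactly $H\ge0$ for the outward normal of $\{|W_k|<1\}$. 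The main obstacle I anticipate is handling the points on the $z$-axis and the point $r=1$ (where $g(1)=0$ and the free boundary meets the axis): there the surface $\partial\{|u|<1\}$ need not be smooth, so the statement must either be restricted to the smooth part, or one must argue by a separate blow-up / removable-singularity analysis using the monotonicity $\partial_r u<0$ to rule out singular cones, as was done in the last lemma of Section~\ref{Sec3}. I would phrase the conclusion as: the mean curvature is nonnegative wherever the free boundary is smooth, which is everywhere off the $z$-axis.
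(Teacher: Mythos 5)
Your proposal has a genuine gap: the key ingredient the paper uses is the interior gradient bound $|\nabla u|\le 1$ in $\{|u|<1\}$ (quoted from \cite[Proposition 2.1]{Wangke}), and nothing in your argument establishes or replaces it. Once one has $|\nabla u|\le 1$, the conclusion is immediate: $|\nabla u|^2$ is subharmonic (since $u$ is harmonic, $\Delta|\nabla u|^2=2|D^2u|^2\ge 0$), so it attains its maximum on the free boundary, hence $\partial_\nu|\nabla u|^2\ge 0$ there with $\nu$ the outward normal; combining $\Delta u=\partial_\nu^2u+H_{\mathrm{div}}\,\partial_\nu u$ (where $u$ is constant along the free boundary) with $\partial_\nu|\nabla u|^2=2\,\partial_\nu^2 u$ on $\partial\{|u|<1\}$ yields the sign of the mean curvature stated in the lemma and in \cite[Remark 2]{C1}. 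Your proposal, however, tries to get the sign from two other mechanisms, and both are broken. First, the claim that $\phi:=\partial_z u$ ``vanishes on $\partial\{|u|<1\}$'' is false: on the free boundary $\nabla u=\pm\nu$, so $\partial_z u=\pm\nu_z$, which is not zero except where the boundary is vertical. So the Hopf-lemma step built on that claim has no content. Second, the sliding argument with the translates $W_k(r,z-t)$ cannot possibly produce a sign on the mean curvature: a vertical translate has an isometric free boundary, with the same mean curvature at corresponding points, so any ``touching-point'' comparison would compare a number with itself. Moreover the inclusion $\{|W_k(\cdot,\cdot-t)|<1\}\subset\{|W_k|<1\}$ is not true — vertical translation of the strip-like region between $\digamma^-$ and $\digamma^+$ does not nest it inside itself — so the domains are not ordered even though the functions are.

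What you get right is the framing (restrict attention to the smooth part of the free boundary away from the axis; the regularity theory already cited takes care of where $H$ is defined), and the observation that $\Delta u=0$ plus $u\equiv\text{const}$ on the free boundary relates $H$ to $\partial_\nu|\nabla u|^2$. The missing piece is that the sign of $\partial_\nu|\nabla u|^2$ is extracted from the global bound $|\nabla u|\le 1$ inside, not from any sliding or translation. If you want to avoid citing \cite{Wangke} outright, you would still need to prove $|\nabla u|\le 1$ (e.g.\ by a comparison of $u$ with suitably translated one-dimensional profiles $\mathcal H$, which is where the monotonicity you mention would actually do useful work), and then the rest of your calibration and regularity discussion becomes superfluous.
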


\begin{proof}
By \cite[Proposition 2.1]{Wangke}, $\left\vert \nabla u\right\vert \leq1$ in
$\left\{  \left\vert u\right\vert <1\right\}  .$ Hence the maximum of
$\left\vert \nabla u\right\vert $ is achieved at the free boundary, then the
assertion of this lemma follows from \cite[Remark 2]{C1}.
\end{proof}

\begin{lemma}
\label{Asy}Suppose $u$ is a solution to $\left(  \ref{EN}\right)  $ depending
only on $r$ and $\left\vert z\right\vert .$ Assume $\partial_{r}u<0$ and
$\partial_{z}u>0$ in $\Omega=\left\{  \left(  r,z\right)  :z>0\text{ and
}\left\vert u\left(  r,z\right)  \right\vert <1\right\}  .$ Let $r_{0}$ be a
large constant. Suppose that in the region where $r>r_{0},$
\begin{align*}
\partial\Omega\cap\left\{  \left(  r,z\right)  :u\left(  r,z\right)
=1\right\}   &  =\left\{  \left(  r,z\right)  :z=f_{1}\left(  r\right)
\right\}  ,\\
\partial\Omega\cap\left\{  \left(  r,z\right)  :u\left(  r,z\right)
=-1\right\}   &  =\left\{  \left(  r,z\right)  :z=f_{2}\left(  r\right)
\right\}  .
\end{align*}
Then there exist $k>0$ and $b\in\mathbb{R}$, such that
\begin{align*}
f_{1}\left(  r\right)  -k\ln r-b  &  \rightarrow0,\\
f_{2}\left(  r\right)  -k\ln r-b+2  &  \rightarrow0,
\end{align*}
as $r\rightarrow+\infty.$
\end{lemma}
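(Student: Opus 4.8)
The plan is to analyze the asymptotic behavior of the two free boundary branches $f_1, f_2$ by combining the monotonicity of $u$, the nonnegative mean curvature of the free boundary from Lemma \ref{mean curvature}, and an ODE/comparison analysis of the profile. First I would observe that since $u$ depends only on $r$ and $z$ and is monotone, each branch $f_i$ is a well-defined function for $r>r_0$, and the two branches cannot intersect since $u$ ranges over the full interval $[-1,1]$ transversally. The nodal set $\{u=0\}$ lies between them, and since $|\nabla u|\le 1$ in $\{|u|<1\}$ while $|\nabla u|=1$ on the boundary, the width $f_1(r)-f_2(r)$ of the strip $\{|u|<1\}$ along a vertical line is bounded below; I would argue it is asymptotically exactly $2$, because locally near either branch the solution is close to the one-dimensional profile $\mathcal{H}$ (slope $1$), so the transition across the strip has total variation $2$ and is achieved with slope approaching $1$, forcing $f_1(r)-f_2(r)\to 2$. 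This is where one invokes that far out the free boundary becomes nearly flat (mean curvature nonnegative plus the graphs being monotone and, as I will argue, having slope tending to $0$).

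Next I would show $f_1'(r)\to 0$. The key geometric input is Lemma \ref{mean curvature}: the mean curvature of $\partial\{|u|<1\}$ is nonnegative with respect to the outward normal. For the upper branch $z=f_1(r)$ in $\mathbb{R}^3$ (so the surface of revolution generated by this graph), the mean curvature condition translates into a differential inequality of the form
\[
\frac{f_1''}{(1+f_1'^2)^{3/2}} + \frac{1}{r}\frac{f_1'}{(1+f_1'^2)^{1/2}} \le 0 ,
\]
i.e. the quantity $\dfrac{r\, f_1'}{\sqrt{1+f_1'^2}}$ is nonincreasing in $r$. Since $f_1$ is increasing (because $\partial_z u>0$ forces the region $\{u=1\}$ to move up as... more precisely $\{u>1-\text{small}\}$ sits above, and by the monotonicity $\partial_r u<0$ the level set $z=f_1(r)$ is increasing in $r$), the quantity $r f_1'/\sqrt{1+f_1'^2}$ is a nonnegative nonincreasing function of $r$, hence converges to some limit $\ell\in[0,\infty)$. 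If $\ell>0$ then $f_1'\sim \ell/r$ for large $r$, giving $f_1(r)=\ell\ln r + O(1)$; if $\ell=0$ the decay is faster. In either case set $k$ to be this limit (renamed; note $k\ge 0$, and one must separately rule out $k=0$ if the statement requires $k>0$ — this would follow from a nondegeneracy/energy lower bound as in \eqref{V}, or can be absorbed by allowing $k\ge0$). Then $f_1(r)-k\ln r$ is monotone in the appropriate sense and bounded, hence converges to some $b\in\mathbb{R}$. Finally, combining with the strip-width statement $f_1(r)-f_2(r)\to 2$ gives $f_2(r)-k\ln r - b + 2\to 0$.

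I expect the main obstacle to be making rigorous the claim that the strip width $f_1(r)-f_2(r)\to 2$ and, relatedly, that $f_1'(r)\to 0$ in the first place so that the curvature quantity $rf_1'/\sqrt{1+f_1'^2}$ is well-defined and the branches are genuine graphs all the way out. Both require a blow-down or localization argument: rescaling $u$ around a point $(r,z)$ with $r\to\infty$ and examining the limit, which should be the one-dimensional solution $\mathcal{H}$ by the classification results for \eqref{Int} cited earlier (\cite{Kam,Liu2,Wangke}) together with the monotonicity and the fact that the rescaled free boundary has vanishing curvature in the limit (the radial curvature term $f_1'/r$ scales away). One must check that the monotonicity $\partial_r u<0$, $\partial_z u>0$ passes to the limit and kills any cone-type or higher-dimensional limit, leaving only the flat one-dimensional profile; this pins down both the asymptotic slope $1$ transversally (hence width $2$) and, via the bounded-curvature control, the convergence $f_i'\to 0$. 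Once these two facts are in hand, the rest is the elementary ODE/monotonicity bookkeeping sketched above.
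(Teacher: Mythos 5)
Your overall strategy mirrors the paper's: use the sign of the mean curvature from Lemma \ref{mean curvature} to get a monotone quantity $a_i(r) := \frac{r f_i'(r)}{\sqrt{1+f_i'^2(r)}}$, use the De Giorgi‑type classification to show the solution is locally one‑dimensional at infinity (so the strip width tends to $2$), and combine these to pin down the logarithmic asymptotics. However, there are two concrete problems.

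First, you have the sign of the mean curvature inequality backwards. For the upper branch $z=f_1(r)$ the outward normal of $\{|u|<1\}$ points \emph{upward}, so Lemma \ref{mean curvature} gives
\[
\left(\frac{r f_1'}{\sqrt{1+f_1'^2}}\right)' \geq 0,
\]
i.e.\ $a_1$ is \emph{nondecreasing}, not nonincreasing as you wrote (and $a_2$ is nonincreasing). Your own sentence is internally inconsistent: you quote ``nonnegative mean curvature'' and then write the inequality with $\le 0$. This is not harmless. With the correct sign the limit $k=\lim a_1$ is automatically strictly positive (since $a_1$ increases from $a_1(r_0)=r_0 f_1'(r_0)/\sqrt{1+f_1'(r_0)^2}>0$ by the monotonicity of $u$), so no separate ``nondegeneracy/energy'' argument is needed; your proposed fix for $k=0$ via \eqref{V} is in any case inapplicable, because Lemma \ref{Asy} is stated for an arbitrary axially symmetric solution of \eqref{EN}, not for the particular minimizer/mountain‑pass family. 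Conversely, with the correct sign you do need an argument that $a_1$ is bounded above, which is exactly where the width $f_1-f_2\to 2$ (from the local 1D profile) enters.

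Second, the final step ``$f_1(r)-k\ln r$ is monotone in the appropriate sense and bounded, hence converges'' is a genuine gap, not bookkeeping. From $f_i' = \frac{a_i(r)}{r} + O(r^{-3})$ and $a_1\to k$ one only obtains $f_1(r) = k\ln r + O(1)$; convergence of $f_1(r)-k\ln r$ requires the integrability
\[
\int_{r_0}^{\infty}\frac{k - a_1(s)}{s}\,ds < \infty , \qquad \int_{r_0}^{\infty}\frac{a_2(s) - k}{s}\,ds < \infty,
\]
which the paper extracts precisely from the identity
\[
f_1(r)-f_2(r) = \int_{r_0}^{r}\frac{a_1(s)-a_2(s)}{s}\,ds + \int_{r_0}^{r}\big(\eta_1(s)-\eta_2(s)\big)\,ds + \text{const},
\]
the fact that $a_1\le k \le a_2$ by monotonicity, and the convergence $f_1-f_2\to 2$. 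You identified the width statement as a needed ingredient but did not connect it to this integrability; without that connection the existence of the constant $b$ does not follow.
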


\begin{proof}
We write
\begin{equation}
\frac{rf_{1}^{\prime}\left(  r\right)  }{\sqrt{1+f_{1}^{\prime2}}}=\int%
_{r_{0}}^{r}\left[  \frac{rf_{1}^{\prime}\left(  r\right)  }{\sqrt
{1+f_{1}^{\prime2}}}\right]  ^{\prime}ds+\frac{r_{0}f_{1}^{\prime}\left(
r_{0}\right)  }{\sqrt{1+f_{1}^{\prime2}\left(  r_{0}\right)  }}:=a_{1}\left(
r\right)  . \label{s0}%
\end{equation}
Applying Lemma \ref{mean curvature}, we get
\[
\left[  \frac{rf_{1}^{\prime}\left(  r\right)  }{\sqrt{1+f_{1}^{\prime2}}%
}\right]  ^{\prime}\geq0.
\]
Hence $a_{1}\left(  \cdot\right)  $ is positive and monotone increasing.
Similarly, applying Lemma \ref{mean curvature}, we have
\begin{equation}
\frac{rf_{2}^{\prime}\left(  r\right)  }{\sqrt{1+f_{2}^{\prime2}}}=\int%
_{r_{0}}^{r}\left[  \frac{rf_{2}^{\prime}\left(  r\right)  }{\sqrt
{1+f_{2}^{\prime2}}}\right]  ^{\prime}ds+\frac{r_{0}f_{2}^{\prime}\left(
r_{0}\right)  }{\sqrt{1+f_{2}^{\prime2}\left(  r_{0}\right)  }}:=a_{2}\left(
r\right)  , \label{s2}%
\end{equation}
where $a_{2}$ is monotone decreasing. On the other hand, using the
monotonicity of $u,$ we can show that as $r$ tends to infinity, $u$ behaves
locally like suitable vertical translation of the one dimensional profile
$\mathcal{H}$. (By the De Giorgi type classification result, see
\cite{Wangke}). This together with $\left(  \ref{s0}\right)  ,$ $\left(
\ref{s2}\right)  $ imply that
\[
\lim_{r\rightarrow+\infty}a_{1}\left(  r\right)  =\lim_{r\rightarrow+\infty
}a_{2}\left(  r\right)  =k\in\left(  0,+\infty\right)  .
\]

Now we can write
\begin{align*}
f_{1}^{\prime}\left(  r\right)   &  =\frac{a_{1}}{r}\frac{1}{\sqrt
{1-r^{-2}a_{1}^{2}}}:=\frac{a_{1}\left(  r\right)  }{r}+\eta_{1}\left(
r\right)  ,\\
f_{2}^{\prime}\left(  r\right)   &  =\frac{a_{2}}{r}\frac{1}{\sqrt
{1-r^{-2}a_{2}^{2}}}:=\frac{a_{2}\left(  r\right)  }{r}+\eta_{2}\left(
r\right)  ,
\end{align*}
where $\eta_{i}\left(  r\right)  =O\left(  r^{-3}\right)  $ as $r\rightarrow
+\infty.$ Therefore
\begin{align}
f_{1}\left(  r\right)  -f_{2}\left(  r\right)   &  =\int_{r_{0}}^{r}%
\frac{a_{1}\left(  s\right)  -a_{2}\left(  s\right)  }{s}ds+\int_{r_{0}}%
^{r}\left(  \eta_{1}\left(  s\right)  -\eta_{2}\left(  s\right)  \right)
ds\nonumber\\
&  +f_{1}\left(  r_{0}\right)  -f_{2}\left(  r_{0}\right)  . \label{f}%
\end{align}
Observe that $\lim_{r\rightarrow+\infty}\left(  f_{1}\left(  r\right)
-f_{2}\left(  r\right)  \right)  =2.$ Then $\left(  \ref{f}\right)  $ together
with the fact that $a_{1}\left(  r\right)  \leq k$ and $a_{2}\left(  r\right)
\geq k$ tell us that
\[
\int_{r_{0}}^{+\infty}\frac{k-a_{1}\left(  r\right)  }{s}<+\infty,\text{ }%
\int_{r_{0}}^{+\infty}\frac{a_{2}\left(  r\right)  -k}{s}<+\infty.
\]
This in turn implies the existence of $b$ such that
\[
f_{1}\left(  r\right)  -k\ln r-b\rightarrow0\text{, as }r\rightarrow+\infty.
\]
The proof is thus completed.
\end{proof}

Our next task is to show that the distance of the free boundary of $V_{a}$ to
the origin $O$ is uniformly bounded.

\begin{lemma}
\label{dis}Let $\digamma_{a}^{\pm}$ be defined by $\left(  \ref{CF}\right)  .$
There exists a constant $C$ independent of $a,$ such that
\[
\operatorname{dist}\left(  O,\digamma_{a}^{\pm}\right)  \leq C.
\]

\end{lemma}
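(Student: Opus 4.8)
The plan is to argue by contradiction: suppose there is a sequence $a_{j}\to+\infty$ along which $\operatorname{dist}(O,\digamma_{a_{j}}^{\pm})\to+\infty$. The monotonicity $\partial_{r}V_{a}<0$, $\partial_{z}V_{a}>0$ in $\Xi_{a}$ makes each level set $\{V_{a}=s\}$ a graph $z=f_{s}(r)$ over an interval $[r_{s},a]$ with $f_{s}$ non-decreasing; in particular $\digamma_{a}^{+}=\{z=f_{1}(r)\}$ and $\digamma_{a}^{-}=\{z=f_{2}(r)\}$ with $f_{1},f_{2}$ non-decreasing, so the point of $\digamma_{a}^{\pm}$ nearest $O$ is attained at the left endpoint of the graph. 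Hence $\operatorname{dist}(O,\digamma_{a}^{\pm})$ is controlled by the heights $f_{1}(0)$, $f_{2}(0)$ at which these curves reach the $z$-axis, together with the nodal height $f_{0}(0)$ of $\{V_{a}=0\}$, and the lemma reduces to bounding $f_{0}(0)$, $f_{1}(0)-f_{0}(0)$ and $f_{0}(0)-f_{2}(0)$ uniformly in $a$.

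First I would record the crude bound coming from the boundary data: $H_{\varepsilon}(|z|-c)$ is a subsolution of the regularized equation for every $c$ (its even reflection produces a favourably signed derivative jump on $\{z=0\}$, compatible with the Neumann condition there), so comparing $U_{\varepsilon}$ with $H_{\varepsilon}(|z|-c)$ for $c\approx k\operatorname{arccosh}(k^{-1}a)$ — the smallest value compatible with $\omega$ on $L_{a}$ — and letting $\varepsilon\to0$ gives $V_{a}\ge\mathcal H(|z|-c)$, whence $f_{1}(0)\le k\operatorname{arccosh}(k^{-1}a)+C$. This already suffices to extract a subsequential limit in the next section, but since the best one-dimensional barrier is pinned high by $\omega$ it does not yet give the $a$-independent bound.

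The uniform bound I would obtain by confronting the energy upper bound of Lemma \ref{cupper} (with $e_{\varepsilon}\to4$), $J(V_{a})\le 2a^{2}+2k^{2}\ln a+C$, with the geometry forced on the free boundary. By Lemma \ref{mean curvature} the interior part of each $\digamma_{a}^{\pm}$ has non-negative mean curvature with respect to the normal pointing out of $\{|V_{a}|<1\}$, so, as in the proof of Lemma \ref{Asy}, the flux-type quantity $rf_{i}^{\prime}(r)/\sqrt{1+f_{i}^{\prime2}}$ is monotone in $r$; together with the exact boundary value $f_{1}(a)=b_{\varepsilon}=k\operatorname{arccosh}(k^{-1}a)+2+\delta_{\varepsilon}$ this pins that quantity near $k$ and limits how steeply $f_{1}$ can rise. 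On the other hand, by the co-area inequality and Lemma \ref{e1}, a monotone rotationally symmetric level surface that stays at height $\ge h$ for all $r$ carries no logarithmic area excess and contributes only $\tfrac12 a^{2}+o(\ln a)$; so the only way $J(V_{a})$ can avoid collapsing down to $2a^{2}$ is for the transition layer — in particular the nodal set — to descend to a \emph{bounded} height somewhere near $r\asymp k$, which by monotonicity of $f_{0}$ forces $f_{0}(0)\le C$. To finally pass from the nodal set to $\digamma_{a}^{+}$, i.e.\ to bound $f_{1}(0)-f_{0}(0)$, I would invoke the one-sided local minimality of $V_{a}$ furnished by the calibration of \cite{Cabre} built on the monotone families $\{U_{\varepsilon}(r,z-k)\}$ — exactly as in the nondegeneracy lemmas of Section \ref{Sec3}: if $f_{1}(0)-f_{0}(0)$ were large, replacing $V_{a}$ inside a fixed-radius ball centred on the axis by its maximum with the capped Alt--Caffarelli function $\min\{c_{0}g_{\beta}(X-O),\,V_{a}(r,z+k_{\max})\}$ would push $\digamma_{a}^{+}$ down through the thick transition layer, and since the swept volume in $\{|V_{a}|<1\}$ dominates the area of the newly created free boundary near the axis, the energy would strictly decrease — a contradiction. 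The bound $f_{0}(0)-f_{2}(0)\le C$ is symmetric.

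I expect the main obstacle to be precisely this third step: turning the mean-curvature monotonicity and the co-area/Lemma \ref{e1} comparison into an honest $a$-uniform lower bound that charges the logarithmic energy excess to the nodal set's descent, and then keeping the Alt--Caffarelli competitor inside the admissible cone $V_{a}\le\eta\le V_{a}(r,z+k_{\max})$ on which the calibration actually yields minimality (which ties the choice of $\beta$ and of the ball radius to $k$ and $k_{\max}$). By contrast, the monotone-graph reduction and the one-dimensional barrier trapping are routine.
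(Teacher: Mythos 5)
Your overall strategy is genuinely different from the paper's, and it has several gaps, the most serious of which is structural.

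\textbf{The paper's argument.} The paper argues by contradiction, splitting into three cases according to how the free boundary can escape to infinity. Case~1 ($\digamma_a^-$ misses $\{z=0\}$): a sliding argument with the one--dimensional solutions $\mathcal H(z-\beta)$ forces $V_a$ to be trivial, contradicting the energy \emph{lower} bound $(\ref{V})$. Case~2 ($\digamma_a^+$ misses $\{r=0\}$): either $\digamma_a^+$ hugs $L_{1,a}$, in which case $J(V_a)\le Ca\ln a$ contradicts $(\ref{V})$, or it hits the $r$-axis at a point $P_i$ with $|P_i|\to\infty$ and $\operatorname{dist}(P_i,(a_i,0))\to\infty$, in which case a sliding argument using the \emph{catenoid-type solutions} $\bar u_\lambda$ of $(\ref{EN})$ from \cite{Liu} gives a contradiction. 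Case~3 (both free boundaries hit the axes, but far from $O$): translate by $P_{a_i}$, pass to the limit, obtain a one-dimensional solution, and contradict the monotonicity structure. None of this uses Lemma~\ref{e1}, Lemma~\ref{mean curvature}, the calibration of \cite{Cabre}, or the $H_\varepsilon(|z|-c)$ barrier that you invoke.

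\textbf{A structural gap in your reduction.} You reduce the lemma to bounding the heights $f_0(0)$, $f_1(0)-f_0(0)$, $f_0(0)-f_2(0)$ at which the relevant level curves meet the $z$-axis. But this reduction is incomplete: monotonicity only says each level set is a non-decreasing graph on an interval $[r_s,a]$, and the left endpoint of the graph can lie on $\{z=0\}$ at $(r_s,0)$ with $r_s\to\infty$. In fact $\digamma_a^-$ \emph{always} ends on the $r$-axis (since $\omega(a,0)\to-1$), and the dangerous scenario is precisely that $\digamma_a^+$ also reaches $\{z=0\}$, at a point $(r_1^*,0)$ with $r_1^*\to\infty$ but $r_1^*\ll a$. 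Your proposal has no mechanism to exclude this radial escape. The paper handles it (Case~2, Subcase~2) only by invoking the globally defined axially symmetric catenoid solutions $\bar u_\lambda$ of $(\ref{EN})$ from \cite{Liu} and a sliding/touching argument; nothing of that nature appears in your outline. This is the essential missing ingredient.

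\textbf{The energy step is also miscalibrated.} You say you ``confront the energy upper bound of Lemma~\ref{cupper}'' with the geometry, and conclude that ``the only way $J(V_a)$ can avoid collapsing down to $2a^2$ is for the nodal set to descend.'' That sentence is a \emph{lower}-bound argument: you need $J(V_a)\geq 2a^2+\frac{k^2}{20}\ln a-C$, which is $(\ref{V})$ (a consequence of Proposition~\ref{moun}), not the upper bound of Lemma~\ref{cupper}. Moreover, the key claim — that a level surface staying at height $\geq h$ for all $r$ ``carries no logarithmic area excess and contributes only $\tfrac12a^2+o(\ln a)$'' — is an \emph{upper} bound on the area of a level set, and neither Lemma~\ref{e1} nor Lemma~\ref{y} nor the monotone-flux identity delivers it: the level surface must still rise from height $\geq h$ at small $r$ to height $\approx k\operatorname{arccosh}(k^{-1}a)$ at $r=a$, and nothing prevents that rise from costing a $\frac{k^2}{2}\ln a$ excess. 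As it stands the claimed dichotomy does not follow, and the ``descent near $r\asymp k$'' is asserted rather than proved. Finally, the calibration step you sketch for bounding $f_1(0)-f_0(0)$ is in the spirit of the nondegeneracy lemmas of Section~\ref{Sec3}, but the competitor you build, $\max\{V_a,\min\{c_0g_\beta,\,V_a(r,z+k_{\max})\}\}$, need not stay inside the admissible cone on which the calibration yields minimality once $k_{\max}$ is taken large enough to push through a transition layer whose thickness is a priori unbounded, so this step is circular. The paper avoids all of this by translating to the free boundary point on the $z$-axis, passing to a limit, and getting a one-dimensional solution which contradicts the monotonicity structure (Case~3).

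In short: the graph reduction and the $H_\varepsilon(|z|-c)$ barrier are correct but only produce the crude, $a$-dependent bound you acknowledge; the step that is supposed to make the bound uniform rests on an unjustified area upper bound, cites the wrong energy bound, and, most importantly, cannot see the radial escape scenario that is the real obstruction and that the paper dispatches with the catenoid solutions $\bar u_\lambda$.
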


\begin{proof}
Assume to the contrary that the conclusion of the lemma were not true. There
are three possibilities.

Case 1. $\digamma_{a}^{-}\cap\left\{  \left(  r,z\right)  :z=0\right\}
=\varnothing.$

In this case, moving plane argument tells us that $V_{a}$ is the trivial one
dimensional (only depends on $z$ variable) solution. To be more precise, let
us consider the family of trivial solutions $\mathcal{H}\left(  z-\beta
\right)  $ where $\beta$ is a parameter. We start with $\beta<0 ($sufficiently
small$)$ and increase $\beta$ continuously until their free boundaries touch
at some point. Monotonicity of the solution implies that the free boundary of
$\mathcal{H}$ and $V_{a}$ must touch inside $\Omega_{a}.$ Maximum principle
then tells us that $V_{a}$ is the trivial one dimensional solution. But this
contradicts with the energy estimate $\left(  \ref{V}\right)  .$ We remark
that actually if the free boundary $\digamma_{a}^{-}$ intersects with
$L_{1,a}$ at a point $\left(  a,z_{0}\right)  $ with $z_{0}%
<k\operatorname{arccosh}\left(  k^{-1}a\right)  -1,$ then at this intersection
point, they must touch tangentially (see \cite{KKS,K}).

Case 2. $\digamma_{a}^{+}\cap\left\{  \left(  r,z\right)  :r=0\right\}
=\varnothing.$

Subcase 1. There exists a universal constant $C$ such that
\[
\digamma_{a}^{+}\subset\left\{  \left(  r,z\right)  :a-C<r<a\right\}  .
\]
Then using the fact that $\left\vert \nabla V_{a}\right\vert $ is uniformly
bounded in $a$, we estimate
\begin{align*}
J\left(  V_{a}\right)   &  =\int\left[  \left\vert \nabla V_{a}\right\vert
^{2}+\chi_{\left(  -1,1\right)  }\left(  V_{a}\right)  \right] \\
&  \leq Ca\ln a,
\end{align*}
which contradicts with the energy estimate $\left(  \ref{V}\right)  .$

Subcase 2. There is a sequence $\left\{  a_{i}\right\}  $ tending to infinity
and a sequence of points $P_{i}\in\digamma_{a_{i}}^{+}\cap\left\{  \left(
r,z\right)  :z=0\right\}  ,$ with $\left\vert P_{i}\right\vert $ also tending
to infinity, such that $\operatorname{dist}\left(  P_{i},\left(
a_{i},0\right)  \right)  \rightarrow+\infty.$

In this case, from the construction in \cite{Liu}, we know that there is a
family of solutions $\bar{u}_{\lambda}$ to the free boundary problem $\left(
\ref{EN}\right)  $ whose nodal set is close to the family of rescaled
catenoids $z=\lambda\operatorname{arccosh}\left(  \lambda^{-1}r\right)  ,$
where $\lambda$ is a (large) parameter. Moving plane type arguments based on
$\bar{u}_{\lambda}$ then tell us that we can touch $V_{a_{i}}$ inside
$\Omega_{a_{i}}$ with some $\bar{u}_{\lambda}.$ This contradicts with the
maximum principle.

Case 3. $\digamma_{a}^{+}\cap\left\{  \left(  r,z\right)  :z=0\right\}
=\varnothing$ and $\digamma_{a}^{-}\cap\left\{  \left(  r,z\right)
:r=0\right\}  =\varnothing.$

Subcase 1. $\operatorname{dist}\left(  O,\digamma_{a_{i}}^{+}\right)
\rightarrow+\infty,$ for a sequence $\left\{  a_{i}\right\}  .$

Let $P_{a}$ be the intersection of $\digamma_{a}^{+}$ with the $z$ axis. Then
the sequence of functions $h_{a_{i}}\left(  \cdot\right)  =V_{a_{i}}\left(
\cdot-P_{a_{i}}\right)  $ converges in $C^{0,\alpha}$ to a function
$h_{\infty}.$ $h_{\infty}$ is a variational solution in the sense of $\left(
\ref{va}\right)  .$ Each $V_{a_{i}}$ is nondegenerated, hence the free
boundary point of $h_{\infty}$ is also nondegenerated. Blow up analysis then
tells us that the free boundary is regular. From De Giorgi type results, we
infer that $h_{\infty}$ is a one dimensional solution. That is, $h_{\infty
}\left(  r,z\right)  =\mathcal{H}\left(  z+1\right)  .$ This contradicts with
the monotonicity of $V_{a_{i}}$ in the $z$ direction.

Subcase 2. $\operatorname{dist}\left(  O,\digamma_{a}^{-}\right)
\rightarrow+\infty.$

In this case, we can proceed similarly as Subcase 1. We omit the details.
\end{proof}

With Lemma \ref{dis} understood, we state the following

\begin{proposition}
\label{Asym}For each $k\in\left(  0,+\infty\right)  ,$ there exists a solution
$W_{k}$ to the free boundary problem $\left(  \ref{EN}\right)  $ whose nodal
set $\left\{  \left(  r,z\right)  :z=f\left(  r\right)  \right\}  $ has the
following asymptotic behavior: There exists a constant $b_{k}$ such that
\begin{equation}
f\left(  r\right)  -k\ln r-b_{k}\rightarrow0,\text{ as }r\rightarrow+\infty.
\end{equation}

\end{proposition}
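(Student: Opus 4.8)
\textbf{Proof proposal for Proposition \ref{Asym}.}
The plan is to obtain $W_k$ as a limit of the solutions $V_a$ constructed in Section \ref{Sec3} as $a\to+\infty$, and then read off the asymptotics from Lemma \ref{Asy}. First I would fix $k>0$ and recall that for each large $a$ we have a solution $V_a$ of \eqref{EN} in $\Omega_a$ with $|\nabla V_a|\le C$ uniformly in $a$, with $\partial_r V_a<0$, $\partial_z V_a>0$ in $\Xi_a$, and with the energy lower bound \eqref{V}. By Lemma \ref{dis} the distance from the origin to the free boundaries $\digamma_a^{\pm}$ is bounded by a constant $C$ independent of $a$; in particular the free boundary does not escape to infinity and does not collapse onto the $z$-axis uniformly. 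Using the uniform gradient bound, Arzel\`a--Ascoli gives a subsequence $V_{a_i}\to W_k$ in $C^{0,\alpha}_{loc}(\mathbb{R}^3)$, and the monotonicity properties pass to the limit: $\partial_r W_k\le 0$, $\partial_z W_k\ge 0$. One then upgrades this to strict monotonicity in the interior of $\{|W_k|<1\}$ via the strong maximum principle applied to $\partial_z W_k$ and $\partial_r W_k$, which satisfy the linearized (harmonic) equation there.

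Next I would verify that $W_k$ is genuinely a solution of \eqref{EN} and that its free boundary is nontrivial. Harmonicity of $W_k$ in $\{|W_k|<1\}$ is immediate from local uniform convergence of harmonic functions. To get the free boundary condition $|\nabla W_k|=1$ on $\partial\{|W_k|<1\}$, I would check that $W_k$ is a variational (Weiss-type) solution: the identity \eqref{va} is stable under $C^{0,\alpha}_{loc}$ convergence together with the uniform Lipschitz bound and the exponential decay of $\nabla V_a$ to $0$ away from the free boundary (which is uniform in $a$ by the arguments behind Lemma \ref{plus} and the remark following it). Nondegeneracy of $W_k$ at its free boundary points also passes to the limit from the uniform nondegeneracy of $V_a$. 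Then the blow-up/De Giorgi machinery used in Section \ref{Sec3} applies verbatim: blow-up limits are two-dimensional cones, hence trivial, so $\digamma^{\pm}(W_k)$ is a smooth curve away from the origin and $|\nabla W_k|=1$ holds classically. Nontriviality (i.e. $W_k$ is not a one-dimensional profile $\mathcal H(z-\beta)$) follows because a one-dimensional profile would force the free boundary to be a horizontal plane, contradicting either the uniform energy lower bound \eqref{V} passed to the limit on large balls, or directly the fact that $\operatorname{dist}(O,\digamma_a^{\pm})$ stays bounded while the nodal set of $V_a$ must reach height $\sim k\operatorname{arccosh}(k^{-1}a)\to\infty$ near $r=a$.

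Finally I would extract the asymptotic behavior. Since $W_k$ is a nontrivial axially symmetric solution of \eqref{EN} with $\partial_r W_k<0$, $\partial_z W_k>0$ in $\Omega=\{z>0,\ |W_k|<1\}$ and smooth free boundary away from the origin, for $r$ large the two free boundary sheets are graphs $z=f_1(r)$ (where $W_k=1$) and $z=f_2(r)$ (where $W_k=-1$) with $f_1(r)-f_2(r)\to 2$. This is exactly the hypothesis of Lemma \ref{Asy}, which yields $k'>0$ and $b$ with $f_1(r)-k'\ln r-b\to 0$ and $f_2(r)-k'\ln r-b+2\to 0$. It remains to identify $k'$ with the prescribed parameter $k$: this I would do by comparing with the family of rescaled catenoid-type solutions $\bar u_\lambda$ from \cite{Liu} (or directly tracking the quantity $a_1(r)=rf_1'/\sqrt{1+f_1'^2}$, whose limit $k'$ is inherited from the corresponding quantity $b_\varepsilon/( k\operatorname{arccosh}(k^{-1}a))$-type normalization built into $\omega$ and $\Omega_a$ through \eqref{b}), using a moving-plane argument between $W_k$ and $\bar u_\lambda$: if $k'\ne k$ one can slide a suitable $\bar u_\lambda$ to an interior contact point, contradicting the strong maximum principle. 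Taking $f=f_1$ and $b_k=b$ finishes the proof.

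\textbf{Main obstacle.} The delicate point is not the compactness step but showing the free boundary of $W_k$ neither degenerates (collapses to the axis or to a line) nor opens up into a cone in the limit, and pinning the asymptotic slope to be exactly $k$; this is where Lemma \ref{dis}, the uniform nondegeneracy, and the comparison with the catenoidal barriers $\bar u_\lambda$ all have to be combined carefully.
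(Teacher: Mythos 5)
Your extraction of $W_k$ as a $C^{0,\alpha}_{loc}$ limit of the $V_a$'s, the passage of monotonicity, nondegeneracy and the variational identity to the limit, and the invocation of Lemma \ref{Asy} to produce some asymptotic slope $k'$ all match the paper's route. The genuine gap is your final step, the identification $k'=k$, and neither of your two suggestions works as stated. The moving-plane comparison with the catenoidal barriers $\bar u_\lambda$ cannot detect the slope: for every $\lambda$ large there \emph{is} an honest solution $\bar u_\lambda$ of \eqref{EN} with asymptotic slope $\lambda$, so a comparison principle alone cannot forbid $W_k$ from converging to one with $k'\neq k$. Sliding $\bar u_\lambda$ against $W_k$ (or against $V_{a}$, where the contact may land on $L_a$) simply shows the two free boundaries don't cross; it gives no contradiction with $k'\neq k$. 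Likewise, ``tracking $a_1(r)$ inherited from the boundary normalization'' begs the question: the whole difficulty is that the nodal slope of $V_a$ near $r=a$ could drift as $a\to\infty$, and nothing local prevents that.

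What the paper actually uses here is an \emph{energy} argument. From Lemma \ref{cupper} one has the uniform upper bound $\limsup_{\varepsilon\to0}c^\ast\le 2a^2+2k^2\ln a+C$, and the energy of $V_a$ is bounded above by this. On the other hand, if the limiting slope were $\bar k\neq k$, one splits $J(V_a)$ into the contribution inside a fixed large ball $B_{A_1}$ (which is $2A_1^2+2\bar k^2\ln A_1+O(1)$ by the $C_{loc}$ convergence to $W_k$) and the contribution outside. The latter is bounded below via Lemma \ref{y}, a minimal-surface (catenoid) area comparison for graphs connecting height $\bar k\ln A_1$ at $r=A_1$ to height $k\ln a$ at $r=a$; this produces the term $\tfrac{1}{2}\tfrac{(k\ln a-\bar k\ln A_1)^2}{\ln a-\ln A_1}$. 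Subtracting the upper bound gives $J(V_a)-\limsup_\varepsilon c^\ast\ge 2(k-\bar k)^2\ln A_1-C$, which is positive for $A_1$ large, a contradiction. So the slope is not pinned down by a comparison principle but by the mountain-pass level built into the construction, together with the minimizing property of catenoids encoded in Lemma \ref{y}. Your proposal would need to be reworked around this energy mechanism; as written the last step does not close.
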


Before starting the proof, let us establish the following

\begin{lemma}
\label{y}Fix a constant $\bar{k}>0$ with $\bar{k}\neq k.$ Suppose $b$ and
$a/b$ is large. Let $\xi$ be a $C^{1}$ function satisfying $\xi\left(
b\right)  =\bar{k}\ln b$ and $\xi\left(  a\right)  =k\ln a.$ Then%
\[
\int_{b}^{a}\sqrt{1+\xi^{\prime2}}rdr\geq\frac{1}{2}a^{2}-\frac{1}{2}%
b^{2}+\frac{1}{2}\frac{\left(  k\ln a-\bar{k}\ln b\right)  ^{2}}{\ln a-\ln
b}-C,
\]
where $C$ does not depend on $a,b.$
\end{lemma}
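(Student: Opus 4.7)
The plan is to bound the area integral $\int_b^a r\sqrt{1+\xi'^2}\,dr$ from below by a \emph{calibration} argument, exploiting the fact that axially symmetric minimal surfaces (catenoids) saturate this type of inequality. The key pointwise estimate comes from Cauchy--Schwarz applied to the vectors $(1,\xi'(r))$ and $(\sqrt{1-c^2/r^2},\,c/r)$: for any real $c$ with $|c|\le b$,
\[
r\sqrt{1+\xi'(r)^2}\;\ge\;\sqrt{r^2-c^2}\,+\,c\,\xi'(r),\qquad r\in[b,a].
\]
The point of choosing the second factor of the form $(\sqrt{1-c^2/r^2},c/r)$, rather than a generic unit vector, is that the right-hand side then splits into a function of $r$ alone plus an exact term $c\,\xi'(r)\,dr$; after integration, the boundary data $\xi(b)=\bar k\ln b$, $\xi(a)=k\ln a$ automatically produce the desired cross term.

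Integrating over $[b,a]$ yields the one-parameter family of lower bounds
\[
\int_b^a r\sqrt{1+\xi'^2}\,dr\;\ge\;\int_b^a\sqrt{r^2-c^2}\,dr\,+\,c\,(k\ln a-\bar k\ln b),
\]
valid for every admissible $c$. The remaining integral has an explicit antiderivative, and Taylor expanding in $c/r$ (which is small since $|c|\ll b$) gives
\[
\int_b^a\sqrt{r^2-c^2}\,dr\;=\;\tfrac12(a^2-b^2)-\tfrac{c^2}{2}\ln(a/b)+O(c^4/b^2).
\]
Substituting leads to
\[
\int_b^a r\sqrt{1+\xi'^2}\,dr\;\ge\;\frac{a^2-b^2}{2}-\frac{c^2}{2}\ln(a/b)+c(k\ln a-\bar k\ln b)+O(c^4/b^2).
\]

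The right-hand side is a concave quadratic in $c$, maximized at
\[
c^{\ast}=\frac{k\ln a-\bar k\ln b}{\ln a-\ln b}=k+(k-\bar k)\frac{\ln b}{\ln(a/b)},
\]
with maximum value $\tfrac12(k\ln a-\bar k\ln b)^2/(\ln a-\ln b)$, which is exactly the desired term. The delicate step---and the one I expect to be the main technical obstacle---is the admissibility check: one must verify $|c^{\ast}|\le b$ and absorb the error $O((c^{\ast})^4/b^2)$ into the constant $C$. Using the hypothesis $\ln(a/b)\ge 1$, the crude estimate $|c^{\ast}|\le|k|+|k-\bar k|\ln b$ handles both; for $b$ large one has $|c^{\ast}|\ll b$ and $(c^{\ast})^4/b^2=O(\ln^4 b/b^2)=o(1)$, which closes the argument. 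Geometrically, the optimal value $c^{\ast}$ is the ``neck radius'' of the catenoid that interpolates between the two boundary heights, and the calibration is tight precisely on that catenoid---this is why the method recovers the sharp constant $1/2$ in front of the logarithmic correction.
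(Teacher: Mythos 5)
Your proof is correct, and it reaches the estimate by a route that differs in implementation from the paper's. The paper proves this lemma by producing the catenoid $z=\sigma\operatorname{arccosh}(\sigma^{-1}r)+d$ through the two boundary points $(b,\bar k\ln b)$ and $(a,k\ln a)$, estimating its parameter $\sigma=\frac{k\ln a-\bar k\ln b}{\ln a-\ln b}+O\bigl((\ln a-\ln b)^{-1}b^{-2}\bigr)$, computing its area explicitly via the $\cosh^{2}$ integral, and then invoking the minimality of the catenoid to compare with the graph of $\xi$. Your argument is the calibration (convex-duality) form of the same comparison: the unit vector $\bigl(\sqrt{1-c^{2}/r^{2}},\,c/r\bigr)$ is exactly the unit tangent field of the catenoid profile with neck parameter $c$, Cauchy--Schwarz is an equality precisely on that profile, and your optimal $c^{\ast}$ coincides with the paper's $\sigma$ to leading order. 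What your version buys is that it is entirely self-contained and elementary: you never need the interpolating catenoid to exist (no transcendental system for $\sigma,d$ to solve), and you never need to justify that the catenoid piece minimizes area among graphs over the annulus --- a step the paper disposes of with a one-line appeal to minimality, which is most cleanly backed up by convexity of $\xi'\mapsto r\sqrt{1+\xi'^{2}}$, i.e.\ essentially by your calibration inequality. What the paper's version buys is the explicit geometric picture (the optimal comparison surface and its area in closed form), which it reuses elsewhere (Lemma \ref{e1}, Lemma \ref{a2}). You also correctly identified and closed the only delicate point of your route, namely the admissibility bound $|c^{\ast}|\le k+|k-\bar k|\ln b\ll b$ (using $\ln(a/b)\ge 1$ and $b$ large) and the absorption of the $O\bigl((c^{\ast})^{4}b^{-2}\bigr)=O(\ln^{4}b/b^{2})$ error into the constant $C$, which indeed depends only on $k,\bar k$.
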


\begin{proof}
The points $\left(  b,\bar{k}\ln b\right)  $ and $\left(  a,k\ln a\right)  $
are on the catenoid $z=\sigma\operatorname{arccosh}\left(  \sigma
^{-1}r\right)  +d:=\eta\left(  r\right)  ,$ where $\sigma,d$ satisfies
\[
\left\{
\begin{array}
[c]{l}%
\sigma\operatorname{arccosh}\left(  \sigma^{-1}b\right)  +d=\bar{k}\ln b,\\
\sigma\operatorname{arccosh}\left(  \sigma^{-1}a\right)  +d=k\ln a.
\end{array}
\right.
\]
The existence of $\sigma$ is guaranteed by the assumption that $b$ and $a/b$
is large. $\sigma$ has the estimate
\[
\sigma=\frac{k\ln a-\bar{k}\ln b}{\ln a-\ln b}+O\left(  \frac{1}{\left(  \ln
a-\ln b\right)  b^{2}}\right)  .
\]
Using this, we then compute
\begin{align*}
\int_{b}^{a}\sqrt{1+\eta^{\prime2}\left(  r\right)  }rdr  &  =\sigma\int%
_{\bar{k}\ln b-d}^{k\ln a-d}\cosh^{2}\left(  \sigma^{-1}z\right)  dz\\
&  \geq\frac{1}{2}\left(  a^{2}-b^{2}\right)  +\frac{\sigma}{2}\left(  \bar
{k}\ln b-k\ln a\right)  -C\\
&  \geq\frac{1}{2}a^{2}-\frac{1}{2}b^{2}+\frac{1}{2}\frac{\left(  k\ln
a-\bar{k}\ln b\right)  ^{2}}{\ln a-\ln b}-C,
\end{align*}
provided that $b$ and $a/b$ is large. The desired estimate of this lemma then
follows from the fact that the catenoid is a(parametric in this case) minimal surface.
\end{proof}

\begin{proof}
[Proof of Proposition \ref{Asym}]We would like to get a uniform estimate for
the sequence of solutions $V_{a}$ independent on $a.$ Once we have this
estimate, we can let $a\rightarrow+\infty$ and get a solution $W_{k}$ with
desired asymptotic behavior at infinity.

By Lemma \ref{dis}, a subsequence of $\left\{  V_{a}\right\}  $ converges in
$C_{loc}^{0,\alpha}\left(  \mathbb{R}^{3}\right)  $ to a solution $W$ of
$\left(  \ref{EN}\right)  .$ Since $V_{a}$ is monotone, $W$ is also
monotone(in both $r$ and $z$ direction). By Lemma \ref{Asy}, there exists
$\bar{k}>0$ and $b_{k}\in\mathbb{R}$ such that
\[
f\left(  r\right)  -\bar{k}\ln r-b_{k}\rightarrow0,\text{ as }r\rightarrow
+\infty.
\]
It suffices to prove that $\bar{k}=k.$

We argue by contradiction and assume $\bar{k}\neq k.$ We would like to show
that for $a$ sufficiently large, the energy of $V_{a}$ satisfies
\[
J\left(  V_{a}\right)  -\lim\sup_{\varepsilon\rightarrow0}c^{\ast}>0.
\]

Fix a large constant $A_{1}$ such that in the region $\mathbb{R}^{3}\backslash
B_{A_{1}},$ the nodal set of $W_{k}$ is close to $z=\bar{k}\ln r+b_{k}.$ Then
we can estimate%
\[
\int_{B_{A_{1}}}\left[  \left\vert \nabla V_{a}\right\vert ^{2}+\chi_{\left(
-1,1\right)  }\left(  V_{a}\right)  \right]  =2A_{1}^{2}+2\bar{k}^{2}\ln
A_{1}+O\left(  1\right)  .
\]
On the other hand, the energy outside the ball $B_{A_{1}}$ satisfies
\[
\int_{\Omega_{a}\backslash B_{A_{1}}}\left[  \left\vert \nabla V_{a}%
\right\vert ^{2}+\chi_{\left(  -1,1\right)  }\left(  V_{a}\right)  \right]
\geq2\int_{\Omega_{a}\backslash B_{A_{1}}}\left\vert \nabla V_{a}\right\vert
=2\int_{-1}^{1}\left\vert \left\{  V_{a}=s\right\}  \cap\left(  \Omega
_{a}\backslash B_{A_{1}}\right)  \right\vert ds.
\]
Using Lemma \ref{y},
\[
\left\vert \left\{  V_{a}=s\right\}  \cap\left(  \Omega_{a}\backslash
B_{A_{1}}\right)  \right\vert \geq\frac{1}{2}a^{2}-\frac{1}{2}A_{1}^{2}%
+\frac{1}{2}\frac{\left(  k\ln a-\bar{k}\ln A_{1}\right)  ^{2}}{\ln a-\ln
A_{1}}-C.
\]
Therefore, recalling the upper bound estimate of $c^{\ast}$(Lemma
\ref{cupper})$,$ we get
\begin{align*}
J\left(  V_{a}\right)  -\lim\sup_{\varepsilon\rightarrow0}c^{\ast}  &
\geq2\frac{\left(  k\ln a-\bar{k}\ln A_{1}\right)  ^{2}}{\ln a-\ln A_{1}%
}+2\bar{k}^{2}\ln A_{1}-2k^{2}\ln a-C\\
&  =2\frac{\left(  k-\bar{k}\right)  ^{2}\ln a\ln A_{1}}{\ln a-\ln A_{1}}-C\\
&  \geq2\left(  k-\bar{k}\right)  ^{2}\ln A_{1}-C>0,
\end{align*}
provided that $A_{1}$ is large enough. This is a contradiction
\end{proof}

\section{Asymptotic analysis of $\left\{  W_{k}\right\}  $\label{Sec5}}

In this section, we first show that as $k\rightarrow0,$ $W_{k}$ converges to
the function $\left\vert z\right\vert -1$ in the region $\left\{  \left(
r,z\right)  :\left\vert z\right\vert <2\right\}  .$ Then we can perform a
rescaling on $W_{k}$ and prove that the resulted sequence of functions
converges to the desired solution of the one phase free boundary problem. Some
computations in this section are similar to those in \cite{Liu2}. A main step
in the argument is the analysis of the asymptotic behavior of $W_{k}.$

We set
\[
\digamma=\partial\left\{  \left\vert W_{k}\right\vert <1\right\}
,\digamma^{\pm}=\digamma\cap\left\{  W_{k}=\pm1\right\}  .
\]
Due to monotonicity of the solution, $\digamma^{-}$ is represented by the
graph of a function $p_{k}:$
\[
\digamma^{-}=\left\{  \left(  r,z\right)  :z=p_{k}\left(  r\right)  \right\}
.
\]

\begin{proposition}
\label{uni}For $k\in\left(  0,1\right)  ,$ there exist $b_{k},$ such that
$\left\vert b_{k}\right\vert \leq C$ and
\[
\left\vert p_{k}\left(  r\right)  -k\ln r-b_{k}\right\vert \leq Cr^{-1},\text{
for all }r>r_{0},
\]
where $r_{0},C$ are certain constants independent of $k.$
\end{proposition}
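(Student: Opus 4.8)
The plan is to upgrade the qualitative asymptotics $p_k(r)-k\ln r-b_k\to0$ (which follows from Lemma \ref{Asy} and Proposition \ref{Asym}, with the same $k$) into the quantitative rate $O(r^{-1})$, uniformly in $k\in(0,1)$. Write $\digamma^{-}=\{z=p_k(r)\}$ and, since $\partial_z W_k>0$, $\digamma^{+}=\{z=q_k(r)\}$ with $q_k>p_k$ and $q_k-p_k\to2$ (Lemma \ref{Asy}); by the regularity theory (\cite{Ca1,Ca2,Ca3}) both graphs are analytic off the axis and $W_k$ is smooth up to them. Let $\mathcal S:=\{|W_k|<1\}\cap\{z>0\}$, on which $W_k$ is harmonic. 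Two preliminary facts, uniform in $k$: first, Lemma \ref{dis} applied to $W_k$ fixes a radius $r_0$ (independent of $k$) with $|p_k(r_0)|+|q_k(r_0)-p_k(r_0)|\le C$; second, with $a_2(r):=r p_k'(r)/\sqrt{1+p_k^{\prime2}}$ and $a_1(r):=r q_k'(r)/\sqrt{1+q_k^{\prime2}}$, Lemmas \ref{mean curvature} and \ref{Asy} give that $a_1$ increases to $k$ and $a_2$ decreases to $k$, so $0\le a_1\le k\le a_2$ and $q_k'\le p_k'$; and since trivially $a_2(r)<r$ and $a_2$ is decreasing, enlarging $r_0$ if necessary we get $0<p_k'(r),\,q_k'(r)\le C r^{-1}$ on $\{r\ge r_0\}$.

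\emph{Step 1 (width estimate).} Consider $v_k:=\partial_z W_k$: it is harmonic in $\mathcal S$, $0<v_k\le1$, and on $\digamma^{\pm}$ the gradient $\nabla W_k$ is the unit normal pointing in the direction of increasing $z$, so $v_k=(1+p_k^{\prime2})^{-1/2}$ on $\digamma^{-}$ and $v_k=(1+q_k^{\prime2})^{-1/2}$ on $\digamma^{+}$; hence $0\le1-v_k\le C r^{-2}$ on $\digamma^{\pm}\cap\{r>r_0\}$. Also $1-v_k\to0$ at infinity, since by the De Giorgi type classification (\cite{Wangke}) $W_k$ is, near infinity, locally close to a vertical translate of the profile $\mathcal H$. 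Comparing the nonnegative harmonic function $1-v_k$ on $\mathcal S\cap\{r>r_0\}$ with $C_1|X-X_0|^{-1}$, where $X_0$ lies on the axis inside $\{r<r_0\}$ and $C_1$ is universal — this dominates $1-v_k$ on $\{r=r_0\}$ (where $1-v_k\le1$) and on $\digamma^{\pm}\cap\{r>r_0\}$ (where $1-v_k=O(r^{-2})$) once $r_0$ is fixed large, and both vanish at the infinite end — a Phragm\'en--Lindel\"of argument yields $1-v_k\le C r^{-1}$ on $\mathcal S\cap\{r>r_0\}$, uniformly in $k$. Integrating $W_k(r,z)=-1+\int_{p_k(r)}^{z}v_k(r,t)\,dt$ up to $z=q_k(r)$, where $W_k=1$, gives $\int_{p_k(r)}^{q_k(r)}v_k=2$, hence
\[
q_k(r)-p_k(r)=2+\int_{p_k(r)}^{q_k(r)}\!\bigl(1-v_k\bigr)\,dt=2+O(r^{-1}),\qquad\text{uniformly in }k .
\]

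\emph{Step 2 (from the width estimate to the claim).} Put $\mu(r):=a_2(r)-k\ge0$ and $\nu(r):=k-a_1(r)\ge0$; both are monotone decreasing to $0$. Since $a_1,a_2\le C$,
\[
p_k'(r)=\frac{a_2(r)/r}{\sqrt{1-a_2(r)^2/r^2}}=\frac kr+\frac{\mu(r)}{r}+O(r^{-3}),\qquad q_k'(r)=\frac kr-\frac{\nu(r)}{r}+O(r^{-3}),
\]
the remainders uniform in $k$. As $q_k-p_k$ is decreasing with limit $2$,
\[
q_k(r)-p_k(r)-2=\int_{r}^{\infty}\bigl(p_k'(s)-q_k'(s)\bigr)\,ds=\int_{r}^{\infty}\frac{\mu(s)+\nu(s)}{s}\,ds+O(r^{-2}),
\]
so by Step 1, $\int_{r}^{\infty}\tfrac{\mu(s)+\nu(s)}{s}\,ds=O(r^{-1})$. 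Using that $\mu$ is decreasing and $\nu\ge0$, $\;\mu(2r)\ln2\le\int_{r}^{2r}\tfrac{\mu(s)}{s}\,ds\le\int_{r}^{\infty}\tfrac{\mu(s)+\nu(s)}{s}\,ds=O(r^{-1})$, hence $\mu(r)=O(r^{-1})$, uniformly in $k$. Therefore $p_k'(r)-k r^{-1}=\mu(r)r^{-1}+O(r^{-3})=O(r^{-2})$ on $\{r>r_0\}$; this being integrable, $b_k:=\lim_{r\to\infty}(p_k(r)-k\ln r)$ exists, satisfies $|b_k|\le|p_k(r_0)|+k\ln r_0+C r_0^{-1}\le C$, and
\[
\bigl|p_k(r)-k\ln r-b_k\bigr|=\Bigl|\int_{r}^{\infty}\bigl(p_k'(s)-k s^{-1}\bigr)\,ds\Bigr|\le C r^{-1},
\]
which is the assertion.

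I expect the main obstacle to be Step 1: carrying out the Phragm\'en--Lindel\"of comparison rigorously on the unbounded, mildly curved, asymptotically flat strip $\mathcal S$ — one must verify that $C_1|X-X_0|^{-1}$ genuinely dominates $1-v_k$ on all of $\partial(\mathcal S\cap\{r>r_0\})$ and that no mass escapes at the infinite end (for which the qualitative decay $1-v_k\to0$ from \cite{Wangke} is essential) — and, throughout, keeping every error constant independent of $k$. The uniformity in $k$ ultimately rests on the observation that for $r>r_0$ the relevant scales — strip width $\approx2$, free boundary curvature $\lesssim r^{-1}$, slope $p_k'\le C r^{-1}$ — never degenerate as $k\to0^{+}$.
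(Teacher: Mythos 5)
Your proposal is correct in spirit but takes a genuinely different route from the paper, and the comparison is worth recording.

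The paper's proof works entirely in a Fermi coordinate frame around $\digamma^-$: it writes $W_k = \bar W + \phi$ with $\bar W$ linear in the normal variable, derives a coupled system for $\phi$, the width function $f$ and the mean curvature $H_{\Gamma_0}$ (Lemmas \ref{l1}--\ref{l3}), and then closes the estimate through a linear theory in exponentially weighted H\"older spaces $\mathcal B_{\beta,\mu}$, $\mathcal B_{\beta,\mu;\ast}$ (Lemmas \ref{estimate1}, \ref{estimate2}). The quantitative decay comes out of the iteration $\theta(a+d)\le Ce^{-\beta d}(\theta(a)+\theta(a-d))+Ca^{-2}$, pushed to $\theta(a)\lesssim a^{-3}$, and only at the very end is $(r^{n-2}p_k'/\sqrt{1+p_k'^{\prime2}})'=O(r^{-3})$ integrated. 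Your proof instead exploits the special structure of this problem: the $z$-derivative $v_k=\partial_z W_k$ is itself a positive bounded harmonic function with computable Dirichlet data on the free boundary, so a single Phragm\'en--Lindel\"of comparison with the Newtonian kernel $C_1|X-X_0|^{-1}$ yields $1-v_k\le Cr^{-1}$ directly; the width estimate $q_k-p_k=2+O(r^{-1})$ then falls out by integrating in $z$, and the conversion to the flux functions $a_1,a_2$ (decay of $\mu=a_2-k$ from decay of $\int_r^\infty\!\frac{\mu+\nu}{s}\,ds$, using monotonicity) is elementary. This bypasses the Fermi-coordinate bookkeeping and the weighted linear theory entirely and gives a shorter argument; the trade-off is that the paper's machinery produces stronger intermediate decay ($O(r^{-3})$ on the curvature, hence $O(r^{-2})$ on $p_k-k\ln r-b_k$) and is more robust -- it was reused in the authors' related works where a clean harmonic conjugate like $v_k$ is not available, and it adapts to the higher-dimensional case in Section \ref{Sec6} where the principal-curvature decay rates change.

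Two points you flagged are indeed the places to be careful. (i) The uniform-in-$k$ bound $p_k',q_k'\le Cr^{-1}$ for $r\ge r_0$: your deduction from $a_2(r)<r$ plus monotonicity of $a_2$ works ($a_2(r)\le a_2(r_0)<r_0$ for $r\ge r_0$, so $a_2(r)/r\le 1/2$ for $r\ge 2r_0$), and Lemma \ref{dis} together with Lemma \ref{k} keeps $r_0$ independent of $k$; this should be written out rather than left as ``trivially''. (ii) The Phragm\'en--Lindel\"of step: as you note, the qualitative decay $1-v_k\to0$ at the infinite end (from the De Giorgi--type classification in \cite{Wangke}) is precisely what lets one exhaust $\mathcal S\cap\{r_0<r<R\}$ by finite maximum principles; the rate need not be uniform in $k$ for this to close, since the constant $C_1$ is determined only by the data on $\{r=r_0\}$ and $\digamma^\pm$. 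One more minor point: analyticity of $\digamma^\pm$ is Kinderlehrer--Nirenberg \cite{KN}, after the Caffarelli $C^{1,\alpha}$ theory, rather than \cite{Ca1,Ca2,Ca3} alone.
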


For notational convenience, we will not write the subscript $k$ if no
confusion will arise. The main difficulty in the proof of Proposition
\ref{uni} is that although $p^{\prime\prime}\left(  r\right)  \rightarrow0$ as
$r\rightarrow+\infty,$ which follows from the regularity theory of
Kinderlehrer-Nirenberg\cite{KN}, a priori we do not have any decay information
on $p^{\prime\prime}.$ We use $\left(  l,s\right)  $ to denote the Fermi
coordinate around the curve $\digamma^{-}.$ Explicitly, for a given point, the
relation between its $\left(  l,s\right)  $ and $\left(  r,z\right)  $
coordinate is given by
\[
\left\{
\begin{array}
[c]{c}%
r=l-\frac{p^{\prime}}{\sqrt{1+p^{\prime2}}}s,\\
z=p+\frac{1}{\sqrt{1+p^{\prime2}}}s,
\end{array}
\right.
\]
where $p,p^{\prime}$ is evaluated at $l.$ Since $p^{\prime\prime}$ is small,
this Fermi coordinate is well defined in a large (depending on $p^{\prime
\prime}$) tubular neighbourhood of $\digamma^{-}.$ Set
\[
\Gamma_{h}:=\left\{  X+h\nu\left(  X\right)  :X\in\digamma^{-}\right\}  ,
\]
where $\nu$ is a unit normal of $\digamma^{-},$ pointing upwards. Then
$\Gamma_{0}=\digamma^{-}.$ We also know that $\digamma^{+}$ can be written as
$\Gamma_{h},$ for a function $h$ close to $2.\ $

Now we define an approximate solution $\bar{W}$ in terms of the Fermi
coordinate as
\[
\bar{W}\left(  l,s\right)  =\frac{s}{1+f\left(  l\right)  }-1,
\]
where $f=\frac{h}{2}-1.$ Then $\bar{W}=-1$ on $\digamma^{-},$ and $\bar{W}=1$
on $\digamma^{+}.$ We write $W$ as $\bar{W}+\phi$ and want to estimate $\phi.$

It will be important to estimate the error of the approximate solution
$\bar{W}.$ We use $H_{M}$ to denote the mean curvature of a surface $M.$ Then
we compute the Laplacian of $\bar{W}$ in the Fermi coordinate
\begin{align*}
\Delta\bar{W}  &  =\Delta_{\Gamma_{s}}\bar{W}+\partial_{s}^{2}\bar
{W}-H_{\Gamma_{s}}\partial_{s}\bar{W}\\
&  =\Delta_{\Gamma_{s}}\bar{W}-\frac{H_{\Gamma_{s}}}{1+f}.
\end{align*}
Let us use $k_{i},i=1,2,$ to denote the principle curvatures of $\digamma
^{-}:$
\[
k_{1}=\frac{p^{\prime\prime}}{\left(  p^{\prime2}+1\right)  ^{\frac{3}{2}}},
\quad k_{2}=\frac{p^{\prime}}{r\sqrt{1+p^{\prime2}}}.
\]
Then the mean curvature of $\digamma^{-}$ at the point $\left(  r,z\right)  $
is
\[
H=\frac{1}{r}\left(  \frac{rp^{\prime}\left(  r\right)  }{\sqrt{1+p^{\prime
2}\left(  r\right)  }}\right)  ^{\prime}.
\]
We will set $\left\vert A\right\vert ^{2}=k_{1}^{2}+k_{2}^{2}$ and
\[
t=\frac{s}{1+f\left(  l\right)  }-1.
\]

\begin{lemma}
\label{l1}The Laplacian operator on $\Gamma_{0}$ acting on $\bar{W}$
satisfies
\[
\Delta_{\Gamma_{0}}\bar{W}=-t\Delta_{\Gamma_{0}}f+I_{1},
\]
where
\[
I_{1}=-tf\Delta_{\Gamma_{0}}f+\Delta_{\Gamma_{0}}\left(  \frac{sf^{2}}%
{1+f}\right)  .
\]

\end{lemma}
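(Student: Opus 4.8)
The plan is to obtain the identity by a direct computation, the only genuine point being to fix the meaning of $\Delta_{\Gamma_0}$ when it acts on a function written in the Fermi coordinates $(l,s)$. Since $\Gamma_0=\digamma^-$ is the surface of revolution generated by the graph $z=p(l)$, for an axially symmetric quantity $v=v(l)$ one has, in the variable $l$,
\[
\Delta_{\Gamma_0}v=\frac{1}{l\sqrt{1+(p')^2}}\,\partial_l\!\left(\frac{l}{\sqrt{1+(p')^2}}\,\partial_l v\right);
\]
in particular $\Delta_{\Gamma_0}$ is linear, it annihilates any quantity independent of $l$, and when applied to a function of $(l,s)$ it treats $s$ purely as a parameter. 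Note also that, by the definitions, $\bar W$ and $t$ are one and the same function of $(l,s)$, namely $\tfrac{s}{1+f(l)}-1$.

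First I would linearize the factor $\tfrac1{1+f}$ by the exact identity $\tfrac1{1+f}=1-f+\tfrac{f^{2}}{1+f}$, so that
\[
\bar W(l,s)=(s-1)-s\,f(l)+\frac{s\,f(l)^{2}}{1+f(l)}.
\]
Applying $\Delta_{\Gamma_0}$ term by term, and using its linearity together with $\Delta_{\Gamma_0}(s-1)=0$ and the fact that the prefactor $s$ is inert for $\Delta_{\Gamma_0}$, one gets
\[
\Delta_{\Gamma_0}\bar W=-s\,\Delta_{\Gamma_0}f+\Delta_{\Gamma_0}\!\left(\frac{s f^{2}}{1+f}\right).
\]
Finally, to split off the distinguished ``main'' term $-t\,\Delta_{\Gamma_0}f$, I would eliminate the coefficient $s$ in front of $\Delta_{\Gamma_0}f$ using the relation between $s$ and $t$ implicit in $t=\tfrac{s}{1+f}-1$; substituting and regrouping the remaining lower order pieces with $\Delta_{\Gamma_0}\!\big(\tfrac{sf^{2}}{1+f}\big)$ produces exactly $-t\,\Delta_{\Gamma_0}f+I_1$ with $I_1=-tf\,\Delta_{\Gamma_0}f+\Delta_{\Gamma_0}\!\big(\tfrac{sf^{2}}{1+f}\big)$.

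There is no serious obstacle: once $\Delta_{\Gamma_0}$ is read as above, this is an algebraic rearrangement. The only steps requiring attention are (i) being explicit that $\Delta_{\Gamma_0}$ differentiates $\bar W(l,s)$ in $l$ with $s$ held fixed, so that $s$-only terms drop and $s$ commutes through, and (ii) the bookkeeping that isolates $-t\,\Delta_{\Gamma_0}f$---the term later to be balanced against the linearized operator acting on the correction $\phi=W-\bar W$---from the remainder $I_1$, both of whose terms are at least quadratic in $f$ and its derivatives and hence of higher order. The substantive work, namely estimating $I_1$ and comparing $\Delta_{\Gamma_0}$ with the intrinsic Laplacian $\Delta_{\Gamma_s}$ that appears in the Fermi-coordinate expression for $\Delta\bar W$, is left to the subsequent lemmas.
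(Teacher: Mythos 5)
Your proposal is correct and is essentially the same computation as in the paper: both expand $\tfrac{1}{1+f}=1-f+\tfrac{f^2}{1+f}$, apply $\Delta_{\Gamma_0}$ term by term (with $s$ inert), and then substitute $s=t(1+f)$ in the leading term to isolate $-t\Delta_{\Gamma_0}f$ and collect the rest into $I_1$. The extra paragraph you include making explicit the meaning of $\Delta_{\Gamma_0}$ on $(l,s)$-functions is a sensible clarification but does not change the argument.
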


\begin{proof}
We can write
\begin{align*}
\bar{W}  &  =\frac{s}{1+f\left(  l\right)  }-1=s\left(  1-f+\frac{f^{2}}%
{1+f}\right)  -1\\
&  =s-sf+\frac{sf^{2}}{1+f}-1\text{. }%
\end{align*}
We then compute
\[
\Delta_{\Gamma_{0}}\bar{W}=-s\Delta_{\Gamma_{0}}f+\Delta_{\Gamma_{0}}\left(
\frac{sf^{2}}{1+f}\right)  .
\]
Inserting the relation $s=t\left(  1+f\right)  $ into the left hand side, we
get%
\begin{align*}
\Delta_{\Gamma_{0}}\bar{W}  &  =-t\left(  1+f\right)  \Delta_{\Gamma_{0}%
}f+\Delta_{\Gamma_{0}}\left(  \frac{sf^{2}}{1+f}\right) \\
&  =-t\Delta_{\Gamma_{0}}f-tf\Delta_{\Gamma_{0}}f+\Delta_{\Gamma_{0}}\left(
\frac{sf^{2}}{1+f}\right)  .
\end{align*}
This finishes the proof.
\end{proof}

\begin{lemma}
\label{l2}We have the following expansion for the mean curvature of
$\Gamma_{s}:$%
\[
\frac{H_{\Gamma_{s}}}{1+f}=\frac{H_{\Gamma_{0}}}{1+f}+t\left\vert A\right\vert
^{2}+I_{2},
\]
where
\[
I_{2}=\frac{1}{1+f}\sum\limits_{i=1}^{2}\frac{s^{2}k_{i}^{3}}{1-sk_{i}}.
\]

\end{lemma}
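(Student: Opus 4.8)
The plan is to derive the identity directly from the classical formula for the curvatures of a parallel hypersurface, so that the right-hand side is obtained by a Taylor-type expansion in $s$. First I would recall that if $\digamma^{-}=\Gamma_{0}$ has principal curvatures $k_{1},k_{2}$ at a point $X$, taken with respect to the unit normal $\nu$ used to set up the Fermi coordinate $\left(l,s\right)$, then at the corresponding point of the parallel surface $\Gamma_{s}=\left\{ X+s\nu\left(X\right):X\in\digamma^{-}\right\}$ the principal curvatures are $k_{i}/\left(1-sk_{i}\right)$, $i=1,2$. This is legitimate throughout the tubular neighbourhood in which the Fermi coordinates are defined, since there $sk_{i}\neq1$: for $r>r_{0}$ with $r_{0}$ large one has $k_{i}\to0$ as $r\to+\infty$ while $\left\vert s\right\vert \leq h$ stays bounded (recall $h$ is close to $2$), so the focal set is never reached. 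Summing, $H_{\Gamma_{s}}=\frac{k_{1}}{1-sk_{1}}+\frac{k_{2}}{1-sk_{2}}$.

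Next I would perform the elementary long division
\[
\frac{k_{i}}{1-sk_{i}}=k_{i}+sk_{i}^{2}+\frac{s^{2}k_{i}^{3}}{1-sk_{i}},
\]
which is just the identity $\left(1-x\right)^{-1}=1+x+x^{2}\left(1-x\right)^{-1}$ applied with $x=sk_{i}$. Adding the contributions of $i=1$ and $i=2$ and inserting $H_{\Gamma_{0}}=k_{1}+k_{2}$ and $\left\vert A\right\vert ^{2}=k_{1}^{2}+k_{2}^{2}$ yields
\[
H_{\Gamma_{s}}=H_{\Gamma_{0}}+s\left\vert A\right\vert ^{2}+\sum_{i=1}^{2}\frac{s^{2}k_{i}^{3}}{1-sk_{i}}.
\]
Dividing through by $1+f$, using the relation $s=t\left(1+f\right)$ exactly as in the proof of Lemma \ref{l1} to rewrite the middle term as $t\left\vert A\right\vert ^{2}$, and noting that the last sum divided by $1+f$ is by definition $I_{2}$, one arrives at the asserted expansion
\[
\frac{H_{\Gamma_{s}}}{1+f}=\frac{H_{\Gamma_{0}}}{1+f}+t\left\vert A\right\vert ^{2}+I_{2}.
\]

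I do not expect a genuine difficulty here; the computation is routine once the parallel-surface formula is in hand. The only points deserving care are bookkeeping ones: one must check that the orientation of $\nu$ and the sign convention for the principal curvatures $k_{1}=p''/\left(1+p'^{2}\right)^{3/2}$, $k_{2}=p'/\left(r\sqrt{1+p'^{2}}\right)$ recorded just before the lemma are matched so that the parallel curvatures appear precisely as $k_{i}/\left(1-sk_{i}\right)$ — this is what produces the denominators $1-sk_{i}$ in $I_{2}$ rather than $1+sk_{i}$ — and that $\Gamma_{s}$ is a genuine smooth graph over $\Gamma_{0}$ for all relevant values of $s$, which is guaranteed by the smallness of $p''$, hence of $\left\vert A\right\vert$, in the region $r>r_{0}$ where this expansion will be used.
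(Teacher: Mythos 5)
Your proposal is correct and follows essentially the same route as the paper: start from the parallel-surface formula $H_{\Gamma_s}=\sum_i k_i/(1-sk_i)$ (the paper cites \cite{DW} for this), expand each summand as $k_i+sk_i^2+s^2k_i^3/(1-sk_i)$, divide by $1+f$, and use $s=t(1+f)$ to convert the middle term into $t|A|^2$. The extra remarks you add about the Fermi coordinate being well defined and the sign convention are sensible bookkeeping but do not change the argument.
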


\begin{proof}
The mean curvature of the surface $\Gamma_{s}$ has the form (see \cite{DW}):
\[
H_{\Gamma_{s}}=\sum\limits_{i=1}^{2}\frac{k_{i}}{1-sk_{i}}=H_{\Gamma_{0}}%
+\sum\limits_{i=1}^{2}sk_{i}^{2}+\sum\limits_{i=1}^{2}\frac{s^{2}k_{i}^{3}%
}{1-sk_{i}}.
\]
Hence
\begin{align*}
\frac{H_{\Gamma_{s}}}{1+f}  &  =\frac{H_{\Gamma_{0}}}{1+f}+\frac{\left\vert
A\right\vert ^{2}}{1+f}s+\frac{1}{1+f}\sum\limits_{i=1}^{2}\frac{s^{2}%
k_{i}^{3}}{1-sk_{i}}\\
&  =\frac{H_{\Gamma_{0}}}{1+f}+t\left\vert A\right\vert ^{2}+\frac{1}{1+f}%
\sum\limits_{i=1}^{2}\frac{s^{2}k_{i}^{3}}{1-sk_{i}}.
\end{align*}
This finishes the proof.
\end{proof}

The function $\phi$ satisfies $\phi=0$ on $\digamma^{\pm}.$ By Lemma \ref{l1}
and Lemma \ref{l2}, we have
\begin{align}
\Delta\phi &  =-\Delta\bar{W}=\frac{H_{\Gamma_{0}}}{1+f}+\left(
\Delta_{\Gamma_{0}}f+\left\vert A\right\vert ^{2}\right)  t\nonumber\\
&  -I_{1}+I_{2}+\Delta_{\Gamma_{0}}\bar{W}-\Delta_{\Gamma_{s}}\bar{W}
\quad\text{ in }\left\{  \left\vert W_{k}\right\vert <1\right\}  . \label{fi}%
\end{align}
Our next purpose is to analyze the boundary condition $\left\vert \nabla
W\right\vert =1.$ We use $\mathfrak{g}_{s}^{i,j}=\mathfrak{g}^{i,j}$ to denote
the entries of inverse matrix of the metric tensor on $\Gamma_{s}.$

\begin{lemma}
\label{l3}On $\Gamma_{0},$ we have%
\[
\partial_{t}\phi-f=I_{3,-},
\]
where
\begin{equation}
I_{3,-}=\frac{f^{2}}{2}-\frac{\left(  \partial_{t}\phi\right)  ^{2}}%
{2}-\mathfrak{g}_{0}^{1,1}\frac{\left(  t+1\right)  ^{2}f^{\prime2}}{2}.
\label{3-}%
\end{equation}
Similarly,
\[
\partial_{t}\phi-f=I_{3,+},\text{ on }\Gamma_{h},
\]
where
\begin{equation}
I_{3,+}=-\frac{1}{2}\left(  1+\mathfrak{g}^{1,1}h^{\prime2}\right)  \left(
\partial_{t}\phi\right)  ^{2}+\frac{\mathfrak{g}_{h}^{1,1}h^{\prime}}%
{1+f}\partial_{t}\phi+\frac{1}{2}f^{2}-\frac{1}{2}\mathfrak{g}_{h}%
^{1,1}\left(  \left(  t+1\right)  f^{^{\prime}}\right)  ^{2}.\text{ }
\label{3+}%
\end{equation}

\end{lemma}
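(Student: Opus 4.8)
The plan is to read off the linearized free boundary condition by expanding $|\nabla W|^{2}=1$ in the Fermi coordinates $(l,s)$ of $\digamma^{-}$ and separating the part that is linear in $\phi$ from everything else. The starting point is that, because the normal direction $\partial_{s}$ is orthogonal to the parallel surfaces $\Gamma_{s}=\{s=\mathrm{const}\}$, the Euclidean metric is block diagonal in $(l,\theta,s)$; since $W$ is independent of the rotation angle $\theta$ this gives
\[
|\nabla W|^{2}=(\partial_{s}W)^{2}+\mathfrak{g}_{s}^{1,1}(\partial_{l}W)^{2}.
\]
Writing $W=\bar{W}+\phi$ with $\bar{W}=t=\frac{s}{1+f(l)}-1$, one records $\partial_{s}\bar{W}=\frac{1}{1+f}$, $\partial_{l}\bar{W}=-\frac{(t+1)f'}{1+f}$, together with the reparametrization $\partial_{t}\phi=(1+f)\,\partial_{s}\phi$ (for fixed $l$).

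On $\digamma^{-}=\Gamma_{0}$ this is almost immediate. There $s=0$, hence $t=-1$; since $\phi$ vanishes identically along the coordinate curve $\{s=0\}$ its tangential derivative $\partial_{l}\phi$ vanishes there, and $\partial_{l}\bar{W}$ vanishes at $t=-1$ as well, so $\partial_{l}W=0$ on $\Gamma_{0}$. Hence the free boundary condition collapses to $(\partial_{s}W)^{2}=\frac{(1+\partial_{t}\phi)^{2}}{(1+f)^{2}}=1$; choosing the root compatible with the positivity of $\partial_{s}W$ gives $1+\partial_{t}\phi=1+f$ to leading order, and expanding the two squares and moving the degree-one terms to the left yields $\partial_{t}\phi-f=\frac{f^{2}}{2}-\frac{(\partial_{t}\phi)^{2}}{2}$, which is $I_{3,-}$ (the extra term $\mathfrak{g}_{0}^{1,1}\frac{(t+1)^{2}f'^{2}}{2}$ is simply zero on $\Gamma_{0}$, and is kept only so that the two formulas have parallel shape).

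On $\digamma^{+}=\Gamma_{h}$ the same strategy applies but more care is needed, and this is the step I expect to be the real work. Now $\Gamma_{h}$ is the graph $s=h(l)=2(1+f(l))$, which is \emph{not} a coordinate surface of the Fermi frame, so $\partial_{l}\phi$ need not vanish on it; instead, differentiating $\phi(l,h(l))=0$ gives the constraint $\partial_{l}\phi=-h'\,\partial_{s}\phi$ on $\Gamma_{h}$. Substituting this, together with $\partial_{s}\phi=\frac{\partial_{t}\phi}{1+f}$ and $\partial_{l}\bar{W}=-\frac{(t+1)f'}{1+f}$, into $|\nabla W|^{2}=1$, and then collecting all the terms that are quadratic or of higher order in the small quantities $\partial_{t}\phi,f,f'$ onto the right side, produces $\partial_{t}\phi-f=I_{3,+}$. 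The delicate points are: (i) keeping the $t$-variable versus $s$-variable normalization (the factor $1+f$) straight through the substitution; (ii) remembering that the tangential metric coefficient $\mathfrak{g}_{s}^{1,1}$ is not identically $1$ — on $\Gamma_{0}$ it equals $(1+p'^{2})^{-1}$ and it tends to $1$ only because $p'\to 0$ as $r\to+\infty$ — so that the $f'$-weighted cross terms genuinely belong inside $I_{3,+}$; and (iii) verifying that every term placed in $I_{3,\pm}$ is of higher order, which is exactly what makes these identities usable in the perturbative analysis of $\phi$ that feeds into Proposition \ref{uni}.

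An equivalent and somewhat cleaner route is to work in the coordinates $(l,t)$, in which both $\digamma^{-}$ and $\digamma^{+}$ become the coordinate surfaces $\{t=\mp 1\}$, so that $\partial_{l}\phi$ vanishes on each; the cost is that these coordinates are no longer orthogonal and one must first compute the inverse-metric component $g^{tt}=\frac{1}{(1+f)^{2}}\bigl(1+\mathfrak{g}_{s}^{1,1}(t+1)^{2}f'^{2}\bigr)$. Either way the condition $g^{tt}(1+\partial_{t}\phi)^{2}=1$ on $\Gamma_{h}$ (and its analogue on $\Gamma_{0}$), expanded and rearranged, gives the two stated identities.
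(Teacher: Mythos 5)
Your proposal is correct and reproduces the paper's own proof: write $|\nabla W|^{2}$ in the orthogonal Fermi frame as $(\partial_{s}W)^{2}+\mathfrak{g}_{s}^{1,1}(\partial_{l}W)^{2}$, use $\partial_{l}\phi=0$ on $\Gamma_{0}$ and $\partial_{l}\phi=-h'\partial_{s}\phi$ on $\Gamma_{h}$, substitute $\partial_{s}\phi=(1+f)^{-1}\partial_{t}\phi$ and $\partial_{l}\bar{W}=-sf'(1+f)^{-2}$, and expand; your alternative $(l,t)$-route via $g^{tt}=(1+f)^{-2}\bigl(1+\mathfrak{g}_{s}^{1,1}(t+1)^{2}f'^{2}\bigr)$ is the same computation repackaged. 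One caveat worth recording: if you actually push the $\Gamma_{h}$ expansion to the end, the linear-in-$\partial_{t}\phi$ correction works out to $-\mathfrak{g}_{h}^{1,1}(t+1)f'h'\,\partial_{t}\phi$, which equals $-\mathfrak{g}_{h}^{1,1}h'^{2}\partial_{t}\phi$ on $\Gamma_{h}$ using $h'=2f'$ and $t+1=2$, not $+\frac{\mathfrak{g}_{h}^{1,1}h'}{1+f}\partial_{t}\phi$ as printed in $(\ref{3+})$ --- this appears to be a typo in the paper (both expressions are quadratic in the small quantities $h',\,\partial_{t}\phi$, so the downstream estimates are unaffected), but it means your bare assertion that the substitution ``produces $\partial_{t}\phi-f=I_{3,+}$'' would not reproduce the displayed formula literally and is worth verifying rather than taking on faith.
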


\begin{proof}
We have
\begin{equation}
\left\vert \nabla\left(  \bar{W}+\phi\right)  \right\vert ^{2}=\left(
\partial_{s}\bar{W}+\partial_{s}\phi\right)  ^{2}+\mathfrak{g}^{1,1}\left(
\partial_{l}\bar{W}+\partial_{l}\phi\right)  ^{2}. \label{G1}%
\end{equation}
Since $\partial_{s}\bar{W}=\frac{1}{1+f},$ $\partial_{l}\bar{W}=-\frac
{sf^{\prime}}{\left(  1+f\right)  ^{2}},$ and $\partial_{l}\phi=0$ on
$\Gamma_{0},$ we obtain from $\left\vert \nabla W\right\vert =1$ that
\begin{equation}
\left(  \partial_{s}\phi\right)  ^{2}+\frac{2}{1+f}\partial_{s}\phi+\frac
{1}{\left(  1+f\right)  ^{2}}+\mathfrak{g}_{0}^{1,1}\frac{s^{2}f^{\prime2}%
}{\left(  1+f\right)  ^{4}}=1,\text{ on }\Gamma_{0}. \label{G2}%
\end{equation}
Observe that $\partial_{s}\phi=\frac{\partial_{t}\phi}{1+f}.$ Hence
\[
\left(  \partial_{t}\phi\right)  ^{2}+2\partial_{t}\phi+\mathfrak{g}_{0}%
^{1,1}\frac{s^{2}f^{\prime2}}{\left(  1+f\right)  ^{2}}=2f+f^{2},\text{ on
}\Gamma_{0}.
\]
This is $\left(  \ref{3-}\right)  .$

On $\Gamma_{h},$ since $\phi\left(  l,h\left(  l\right)  \right)  =0,$ we have
$\partial_{l}\phi=-\partial_{s}\phi h^{\prime}.$ Hence from $\left(
\ref{G1}\right)  $ and $\left\vert \nabla W\right\vert =1,$ we deduce%
\[
\left(  1+\mathfrak{g}_{h}^{1,1}h^{\prime2}\right)  \left(  \partial_{s}%
\phi\right)  ^{2}+\left(  \frac{2}{1+f}-2\mathfrak{g}_{h}^{1,1}\partial
_{l}\bar{W}h^{\prime}\right)  \partial_{s}\phi+\frac{1}{\left(  1+f\right)
^{2}}+\mathfrak{g}_{h}^{1,1}\frac{s^{2}f^{\prime2}}{\left(  1+f\right)  ^{4}%
}=1.
\]
This is $\left(  \ref{3+}\right)  .$
\end{proof}

It is expected that the functions $f$ and $p^{\prime\prime}$ decays like
$O\left(  l^{-2}\right)  $ as $l\rightarrow+\infty.$ To prove this, we need to
work in a suitable (exponentially weighted, rather than algebraically
weighted) functional spaces. Fix an $\alpha\in\left(  0,1\right)  .$

\begin{definition}
For $\mu=0,1,2,$ $\beta\geq0,$ the space $\mathcal{B}_{\beta,\mu}$ consists of
those functions $\eta=\eta\left(  l\right)  ,l\in\lbrack0,+\infty),$ such
that
\[
\left\Vert \eta\right\Vert _{\beta,\mu}:=\sup_{l}\left[  e^{\beta l}\left\Vert
\eta\right\Vert _{C^{\mu,\alpha}\left(  [l,l+1]\right)  }\right]  <+\infty.
\]

\end{definition}

\begin{definition}
For $\mu=0,1,2,$ $\beta\geq0$, the space $\mathcal{B}_{\beta,\mu;\ast}$
consists of those functions $\varphi=\phi\left(  t,l\right)  ,$ $\left(
t,l\right)  \in\left[  -1,1\right]  \times\lbrack0,+\infty),$ such that
\[
\left\Vert \varphi\right\Vert _{\beta,\mu;\ast}:=\sup_{\left(  t,l\right)
}\left[  e^{\beta l}\left\Vert \varphi\right\Vert _{C^{\mu,\alpha}\left(
\left[  -1,1\right]  \times\lbrack l,l+1]\right)  }\right]  <+\infty.
\]

\end{definition}

\begin{lemma}
\label{estimate1}Let $\delta>0$ be a fixed small constant. Assume $\beta
\in\left[  0,\delta\right]  .$ Suppose $\eta\in\mathcal{B}_{\beta,0;\ast}$ and
$\Phi\in\mathcal{B}_{\beta,2;\ast}$ satisfying
\[
\left\{
\begin{array}
[c]{l}%
\partial_{t}^{2}\Phi+\partial_{l}^{2}\Phi+\frac{1}{l}\partial_{l}\Phi
=\eta,\text{ }\left[  -1,1\right]  \times\lbrack0,+\infty),\\
\Phi\left(  t,l\right)  =0\text{, for }t=\pm1.
\end{array}
\right.
\]
Then $\left\Vert \Phi\right\Vert _{\beta,2;\ast}\leq C\left\Vert
\eta\right\Vert _{\beta,0;\ast}.$
\end{lemma}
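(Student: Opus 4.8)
The plan is to establish the weighted a priori estimate $\|\Phi\|_{\beta,2;*}\le C\|\eta\|_{\beta,0;*}$ by a combination of a barrier (maximum principle) argument for the $C^0$ bound and interior Schauder estimates for the higher derivatives, all done uniformly in $l$ on the unit slabs $[-1,1]\times[l,l+1]$. The operator $\mathcal{L}\Phi:=\partial_t^2\Phi+\partial_l^2\Phi+\frac1l\partial_l\Phi$ is just the $3$-dimensional axially symmetric Laplacian written in the $(t,l)$ variables (with $l$ playing the role of the radial variable $r$), restricted to the strip $|t|<1$ with zero Dirichlet data on $t=\pm1$, so the qualitative theory is classical; the only real content is tracking the exponential weight $e^{\beta l}$.

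First I would prove the pointwise bound. Set $M:=\|\eta\|_{\beta,0;*}$, so $|\eta(t,l)|\le M e^{-\beta l}$. I look for a barrier of the form $\Psi(t,l)=A\,e^{-\beta l}\,\psi(t)$ where $\psi$ is a fixed smooth positive function on $[-1,1]$ with $\psi(\pm1)=0$, e.g. $\psi(t)=2-t^2$ truncated so it stays positive, or more simply $\psi(t)=\cos(\pi t/2)+\kappa$ for a suitable small $\kappa>0$ — the point being that we need $\partial_t^2\psi\le -c_0<0$ on $[-1,1]$ while $\psi\ge c_1>0$. Then
\[
\mathcal{L}\Psi=A e^{-\beta l}\Big(\psi''(t)+\beta^2\psi(t)-\tfrac{\beta}{l}\psi(t)\Big).
\]
Since $\beta\in[0,\delta]$ with $\delta$ small and $l\ge$ (we may assume $l$ bounded below away from $0$ on the domain, or handle a compact initial piece separately), the bracket is $\le \psi''(t)+\beta^2\psi(t)\le -c_0+\delta^2 c_2<-c_0/2$ for $\delta$ small. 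Hence $\mathcal{L}\Psi\le -\tfrac{c_0}{2}A e^{-\beta l}\le -M e^{-\beta l}$ once $A\ge 2M/c_0$. Thus $\pm\Phi\mp\Psi$ is $\mathcal{L}$-superharmonic, vanishes at $t=\pm1$, and (using that $\Phi\in\mathcal{B}_{\beta,2;*}$ forces $\Phi e^{\beta l}\to0$ appropriately, so there is no boundary term at $l=+\infty$) the maximum principle on the semi-infinite strip gives $|\Phi|\le\Psi\le C M e^{-\beta l}$, i.e. $\|\Phi\|_{\beta,0;*}\le C\|\eta\|_{\beta,0;*}$. For the piece near $l=0$ one simply notes the strip $[-1,1]\times[0,1]$ is a bounded domain and standard elliptic estimates apply directly.

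Next I upgrade to the $C^{2,\alpha}$ bound slab by slab. Fix $l_0$ and work on $[-1,1]\times[l_0,l_0+1]$. On the enlarged slab $[-1,1]\times[l_0-1,l_0+2]\cap\{l\ge 0\}$ the coefficient $\frac1l$ is smooth and bounded (again treating $l_0$ near $0$ as the easy compact case), so interior-up-to-the-flat-boundary Schauder estimates for the equation $\mathcal{L}\Phi=\eta$ with the Dirichlet condition on $t=\pm1$ give
\[
\|\Phi\|_{C^{2,\alpha}([-1,1]\times[l_0,l_0+1])}\le C\Big(\|\eta\|_{C^{0,\alpha}([-1,1]\times[l_0-1,l_0+2])}+\|\Phi\|_{C^0([-1,1]\times[l_0-1,l_0+2])}\Big),
\]
with $C$ independent of $l_0$ by translation invariance of the leading part of $\mathcal{L}$ (the only $l_0$-dependence is through $\frac1l$, which is uniformly controlled in $C^{0,\alpha}$ for $l\ge 1$). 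Multiplying through by $e^{\beta l_0}$, using $e^{\beta l_0}\le e^{\beta}e^{\beta l}$ for $l\in[l_0-1,l_0+2]$ to absorb the weight into the $\mathcal{B}_{\beta,\cdot;*}$ norms, and invoking the $C^0$ bound from the previous step, yields $e^{\beta l_0}\|\Phi\|_{C^{2,\alpha}([-1,1]\times[l_0,l_0+1])}\le C(\|\eta\|_{\beta,0;*}+\|\Phi\|_{\beta,0;*})\le C\|\eta\|_{\beta,0;*}$. Taking the supremum over $l_0$ gives the claim.

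The main obstacle is the interplay between the weight $e^{\beta l}$ and the first-order term $\frac1l\partial_l\Phi$ at large $l$: one must check that the barrier's exponential decay rate $\beta$ is genuinely admissible, i.e. that the "$-\frac{\beta}{l}\psi$" correction does not spoil the sign of $\mathcal{L}\Psi$. This is exactly why the hypothesis restricts $\beta$ to a small interval $[0,\delta]$ — for $\beta$ up to the spectral threshold $\pi/2$ (the first Dirichlet eigenvalue of $-\partial_t^2$ on $[-1,1]$) one would need a more careful barrier, but in the small-$\beta$ regime the $\psi''$ term dominates and the argument above is robust. A secondary, purely bookkeeping, point is making the no-boundary-term-at-infinity step rigorous: since we only \emph{assume} $\Phi\in\mathcal{B}_{\beta,2;*}$, we know $|\Phi|\le C' e^{-\beta l}$ for some (possibly large) $C'$, which is already enough to compare with $\Psi$ after choosing $A$ large, then a standard "shrink $A$ to its optimal value" argument recovers the constant depending only on $\|\eta\|_{\beta,0;*}$; alternatively one exhausts the strip by $[-1,1]\times[0,R]$ and lets $R\to\infty$.
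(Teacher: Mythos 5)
Your barrier-plus-Schauder strategy is correct and is exactly the sort of ``standard argument'' the paper has in mind when it declines to give a proof and instead cites Lemma 5.1 of the Del Pino--Kowalczyk--Wei paper: one shows the $C^0$ decay by a maximum-principle comparison with a supersolution of the form $A\,e^{-\beta l}\psi(t)$, and then upgrades to the weighted $C^{2,\alpha}$ bound by applying interior-to-boundary Schauder estimates on a moving unit slab, using that the coefficient $1/l$ is uniformly $C^{0,\alpha}$ for $l$ bounded away from $0$. The computation $\mathcal{L}\Psi=A e^{-\beta l}(\psi''+\beta^2\psi-\tfrac{\beta}{l}\psi)$, the observation that $-\tfrac{\beta}{l}\psi\le 0$ only helps, the restriction $\beta\le\delta$ being needed precisely so that the spectral term $\psi''$ dominates $\beta^2\psi$, and the exhaustion-by-$[0,R]$ device to dispose of the boundary at infinity are all in order.

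One small internal inconsistency to fix: you first ask for $\psi(\pm1)=0$ and then immediately require $\psi\ge c_1>0$ on $[-1,1]$; it is the latter condition that you actually need (so that $\mathcal{L}\Psi\le -c\,A e^{-\beta l}$ holds uniformly up to $t=\pm1$, not just in the interior), and both of your concrete examples $\psi=2-t^2$ and $\psi=\cos(\pi t/2)+\kappa$ indeed satisfy $\psi(\pm1)>0$, not $=0$. Since $\Phi$ vanishes on $t=\pm1$ while $\Psi>0$ there, the comparison $\Psi\pm\Phi\ge0$ on that part of the boundary is still immediate, so the argument goes through once the statement about $\psi$ is corrected. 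You should also make the treatment of $l=0$ slightly more explicit (either by interpreting $\mathcal{L}$ as the three-dimensional axially symmetric Laplacian so that $l=0$ is an interior set, and noting that $\eta\in\mathcal{B}_{\beta,0;*}$ forces $\partial_l\Phi(\cdot,0)=0$, or by observing that in the paper's application the relevant function is supported in $l\ge a$ anyway), but this is a bookkeeping point rather than a gap.
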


The proof of Lemma \ref{estimate1} follows from standard arguments, see for
instance Lemma 5.1 of \cite{Ma}, where a more complicated situation for the
Allen-Cahn equation is studied. We omit the details.

The solution $W$ resembles the one dimensional profile only when $r$ is large,
say $r>r_{0}.$ Therefore the function $\phi$ is only well defined in the
region $r>r_{0}.$ Note that $r_{0}$ can be chosen to be independent of $k.$ We
introduce a cutoff function $\zeta$ such that
\[
\zeta\left(  s\right)  =\left\{
\begin{array}
[c]{c}%
1,s>1,\\
0,s<0.
\end{array}
\right.
\]
Slightly abusing the notation, we still write the function $\phi$ in the
$\left(  t,l\right)  $ coordinate as $\phi\left(  t,l\right)  .$ For
$a>r_{0},$ let $\Psi_{a}\left(  t,l\right)  :=\zeta_{a}\left(  l\right)
\phi\left(  t,l\right)  $ and $\bar{f}_{a}=\zeta_{a}f,$ where $\zeta
_{a}\left(  l\right)  =\zeta\left(  l-a\right)  .$ Using $\left(
\ref{fi}\right)  $ and Lemma \ref{l3}, we find that $\Psi_{a}$ satisfies
\begin{equation}
\left\{
\begin{array}
[c]{l}%
\Delta\Psi_{a}=\zeta_{a}H_{\Gamma_{0}}+\left(  \Delta_{\Gamma_{0}}f+\left\vert
A\right\vert ^{2}\right)  t\zeta_{a}+P,\text{ }\left(  t,l\right)  \in\left[
-1,1\right]  \times\lbrack0,+\infty),\\
\Psi_{a}\left(  \pm1,l\right)  =0,\\
\partial_{t}\Psi_{a}-\bar{f}_{a}=\gamma_{-},\text{for }t=-1,\\
\partial_{t}\Psi_{a}-\bar{f}_{a}=\gamma_{+},\text{ for }t=1.
\end{array}
\right.  \label{linear2}%
\end{equation}
Here $P$ is a perturbation term and explicitly,
\[
P=\left(  I_{2}-I_{1}+\Delta_{\Gamma_{0}}\bar{W}-\Delta_{\Gamma_{s}}\bar
{W}\right)  \zeta_{a}+2\nabla\zeta_{a}\nabla\phi+\Delta\zeta_{a}\phi,
\]
and $\gamma_{+}=\zeta_{a}I_{3,+},$ $\gamma_{-}=\zeta_{a}I_{3,-}.$

We need the following linear theory.

\begin{lemma}
\label{estimate2}Suppose $\beta\in\left[  0,\delta\right]  ,$ with $\delta>0$
being small. Assume $\eta^{\pm}\in\mathcal{B}_{\beta,1},$ $g_{1},g_{2}%
\in\mathcal{B}_{\beta,0},$ and $\vartheta\in\mathcal{B}_{\beta,0,\ast}.$ If
$\Phi$ satisfies
\begin{equation}
\left\{
\begin{array}
[c]{l}%
\partial_{t}^{2}\Phi+\partial_{l}^{2}\Phi+\frac{1}{l}\partial_{l}\Phi
=g_{1}+t\Delta_{\Gamma_{0}}g_{2}+\vartheta,\text{ }\left(  t,l\right)
\in\left[  -1,1\right]  \times\lbrack0,+\infty).\\
\Phi\left(  \pm1,l\right)  =0,\\
\partial_{t}\Phi-g_{2}=\eta_{-},\text{for }t=-1,\\
\partial_{t}\Phi-g_{2}=\eta_{+},\text{ for }t=1.
\end{array}
\right.  \label{L}%
\end{equation}
Then
\begin{align*}
\left\Vert g_{1}\right\Vert _{\beta,0}  &  \leq C\left\Vert \eta_{+}-\eta
_{-}\right\Vert _{\beta,1}+C\left\Vert \vartheta\right\Vert _{\beta,0,\ast},\\
\left\Vert g_{2}\right\Vert _{\beta,2}  &  \leq C\left\Vert \eta_{+}+\eta
_{-}\right\Vert _{\beta,1}+C\left\Vert \vartheta\right\Vert _{\beta,0,\ast}.
\end{align*}

\end{lemma}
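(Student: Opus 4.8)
The strategy is to collapse \eqref{L} to two decoupled scalar equations for $g_1$ and $g_2$ by projecting the interior equation onto the two lowest transverse modes $1$ and $t$ in the $t$-variable, and then to solve these in the weighted spaces. I shall use the standard Lyapunov--Schmidt normalization that $\Phi(\cdot,l)$ is $L^{2}(-1,1)$-orthogonal to $1$ and to $t$ for each $l$; if this is not arranged a priori, one first subtracts from $\Phi$ its projection onto $\mathrm{span}\{1,t\}$ and reabsorbs it into the definitions of $f$ and $h$ (equivalently into $\bar{W}$), which leaves the interior equation and the free boundary condition in the form \eqref{L} and preserves the $\mathcal{B}_{\beta,\cdot}$-norms.

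First I would multiply the interior equation by $1$ and integrate over $t\in[-1,1]$. Since $\int_{-1}^{1}\Phi\,dt\equiv 0$, the term $\partial_l^{2}\Phi+\tfrac1l\partial_l\Phi$ integrates to zero; the remaining boundary term is $\int_{-1}^{1}\partial_t^{2}\Phi\,dt=\partial_t\Phi(1,\cdot)-\partial_t\Phi(-1,\cdot)=\eta_+-\eta_-$ after the $g_2$'s cancel, and $\int_{-1}^{1}t\,dt=0$ annihilates the $\Delta_{\Gamma_0}g_2$ term on the right. One obtains
\[
2g_1=(\eta_+-\eta_-)-\int_{-1}^{1}\vartheta(t,\cdot)\,dt,
\]
whence $\|g_1\|_{\beta,0}\le\tfrac12\|\eta_+-\eta_-\|_{\beta,0}+\|\vartheta\|_{\beta,0,\ast}$, which is stronger than the claimed bound. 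Multiplying instead by $t$ and integrating, I would use $\int_{-1}^{1}t\,\partial_t^{2}\Phi\,dt=\partial_t\Phi(1,\cdot)+\partial_t\Phi(-1,\cdot)=2g_2+\eta_++\eta_-$ (the $[\Phi]_{-1}^{1}$ boundary term vanishes, and the $\tfrac1l\partial_l$ term disappears because $\int_{-1}^{1}t\Phi\,dt\equiv 0$), together with $\int_{-1}^{1}t^{2}\,dt=\tfrac23$, to arrive at
\[
\Bigl(2-\tfrac23\Delta_{\Gamma_0}\Bigr)g_2=-(\eta_++\eta_-)+\int_{-1}^{1}t\,\vartheta(t,\cdot)\,dt.
\]

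It then remains to show that $2-\tfrac23\Delta_{\Gamma_0}$ is an isomorphism of $\mathcal{B}_{\beta,2}$ onto $\mathcal{B}_{\beta,0}$ with norm bounded uniformly for $\beta\in[0,\delta]$; this is the real content of the lemma. On functions of $l$ alone, $\Delta_{\Gamma_0}$ agrees with $\partial_l^{2}+\tfrac1l\partial_l$ up to coefficients that converge to those of the catenoid and so differ by exponentially decaying terms, to be treated as a small perturbation of the model operator. The decisive point is the strictly positive zeroth order term $2$: conjugating by $e^{\beta l}$ changes only the lower order coefficients, the zeroth order one becoming $2-\tfrac23\beta^{2}+O(\beta)$, which stays bounded below by $1$ once $\delta$ is small; the maximum principle then yields the weighted sup estimate, and interior Schauder estimates upgrade it to the weighted $C^{2,\alpha}$ estimate, uniformly in $\beta$. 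Inserting this in the equation for $g_2$ gives $\|g_2\|_{\beta,2}\le C\|\eta_++\eta_-\|_{\beta,0}+C\|\vartheta\|_{\beta,0,\ast}$, and since $\|\cdot\|_{\beta,0}\le\|\cdot\|_{\beta,1}$ both asserted inequalities follow.

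The step I expect to be the main obstacle is precisely this uniform-in-$\beta$ solvability of $2-\tfrac23\Delta_{\Gamma_0}$ on the half-line with exponential weights: one must handle the behavior at the inner endpoint and the degeneracy of the coefficient $\tfrac1l$ there, and confirm that the corrections distinguishing $\Delta_{\Gamma_0}$ from $\partial_l^{2}+\tfrac1l\partial_l$ really are an exponentially small perturbation. If the normalization on $\Phi$ is not taken for granted, the delicate point shifts to the preliminary projection step: one has to check that reabsorbing the $\mathrm{span}\{1,t\}$-part of $\Phi$ into $f$ and $h$ preserves both the interior equation and the free boundary condition in the structural form of \eqref{L}, with all error terms still controlled in the relevant $\mathcal{B}_{\beta,\cdot}$-norms.
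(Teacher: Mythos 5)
Your approach has a genuine gap at the very first step: the $L^{2}(-1,1)$-orthogonality of $\Phi(\cdot,l)$ to $1$ and $t$ is not available, and it cannot be arranged. If you subtract $a(l)+b(l)t$ from $\Phi$, you destroy the Dirichlet condition $\Phi(\pm1,l)=0$, which is essential everywhere in your computation (you use it when integrating $t\,\partial_t^2\Phi$ by parts). If you instead subtract a combination like $a(l)(1-t^2)+b(l)(t-t^3)$ to preserve Dirichlet, the term $(\partial_l^2+\tfrac1l\partial_l)$ applied to it no longer lies in $\mathrm{span}\{1,t\}$ in $t$, so it cannot be reabsorbed into $g_1$ and $t\,\Delta_{\Gamma_0}g_2$ without distorting the structure of \eqref{L}. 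And "reabsorbing into $f$ and $h$" is not a move you may make here: this lemma is a pure a priori estimate on the data $g_1,g_2,\eta_\pm,\vartheta$ appearing in \eqref{L}; in the application these are fixed geometric quantities ($g_2=\bar f_a$, $g_1=\zeta_a H_{\Gamma_0}$) and there is no free normalization parameter left to adjust.

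Once the orthogonality is dropped, your identities acquire the non-negligible extra terms $(\partial_l^2+\tfrac1l\partial_l)\int_{-1}^1\Phi\,dt$ and $(\partial_l^2+\tfrac1l\partial_l)\int_{-1}^1 t\Phi\,dt$, and these carry exactly the information you have thrown away. A quick sanity check shows they are not ignorable: with $\hat\Phi=\hat g_1 q_{1,\xi}+\widehat{\Delta_{\Gamma_0}g_2}\,q_{2,\xi}$ one has $\int_{-1}^1 q_{1,\xi}\,dt=\tfrac{2\tanh|\xi|}{|\xi|^3}-\tfrac{2}{|\xi|^2}\to-\tfrac23\neq0$ as $|\xi|\to0$, so $\int\Phi\,dt$ cannot vanish. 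Tracking the extra terms through in Fourier, your "identities" become $\tfrac{2\tanh|\xi|}{|\xi|}\hat g_1=\widehat{\eta_+-\eta_-}-\int\hat\vartheta$ and $2|\xi|\coth|\xi|\,\hat g_2=-\widehat{(\eta_++\eta_-)}+\int t\hat\vartheta$, i.e. \emph{nonlocal} relations whose multipliers $\tfrac{|\xi|}{2\tanh|\xi|}$ and $\tfrac{1}{2|\xi|\coth|\xi|}$ behave like $\tfrac{|\xi|}{2}$ and $\tfrac{1}{2|\xi|}$ at high frequency. Your formulas $2g_1=(\eta_+-\eta_-)-\int\vartheta$ and $(2-\tfrac23\Delta_{\Gamma_0})g_2=-(\eta_++\eta_-)+\int t\vartheta$ are merely the zero-frequency Taylor approximations of these symbols. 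This is precisely why the lemma is stated with a derivative loss ($\|\eta_+-\eta_-\|_{\beta,1}$ on the right, yielding only $\|g_1\|_{\beta,0}$; $\|\eta_++\eta_-\|_{\beta,1}$ yielding $\|g_2\|_{\beta,2}$, a gain of one derivative, not two). Your own observation that you obtain something "stronger than the claimed bound" for $g_1$, and the gain of two derivatives from $(2-\tfrac23\Delta_{\Gamma_0})^{-1}$, should have been a warning that information was being lost. The paper's proof takes the Fourier transform in the $\Gamma_0$-variables, writes $\hat\Phi$ as a combination of the explicit profiles $q_{1,\xi}, q_{2,\xi}$ determined by the interior equation plus the Dirichlet conditions, and then matches the Neumann-type conditions at $t=\pm1$ to read off the exact multipliers; the required weighted estimates then follow from the analyticity and the $|\xi|^{\pm1}$ growth/decay of $q_{i,\xi}'(1)$, as in \cite{Liu2}. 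That is the missing ingredient in your argument.
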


\begin{proof}
The proof of this lemma is similar as \cite[Proposition 17 ]{Liu}, using
Fourier transform. We sketch the proof for completeness. For each $\xi
\in\mathbb{R}^{2},$ let $q_{1,\xi},q_{2,\xi}$ solve
\[
\left\{
\begin{array}
[c]{l}%
q_{1,\xi}^{\prime\prime}\left(  t\right)  -\left\vert \xi\right\vert
^{2}q_{1,\xi}\left(  t\right)  =1,\\
q_{1,\xi}\left(  -1\right)  =q_{1,\xi}\left(  1\right)  =0,
\end{array}
\right.
\]
and
\[
\left\{
\begin{array}
[c]{l}%
q_{2,\xi}^{\prime\prime}\left(  t\right)  -\left\vert \xi\right\vert
^{2}q_{2,\xi}\left(  t\right)  =t,\\
q_{2,\xi}\left(  -1\right)  =q_{2,\xi}\left(  1\right)  =0.
\end{array}
\right.
\]
Explicitly, $q_{1,\xi}$ and $q_{2,\xi}$ are given by
\begin{align*}
q_{1,\xi}\left(  t\right)   &  =\frac{\cosh\left(  \left\vert \xi\right\vert
t\right)  }{\left\vert \xi\right\vert ^{2}\cosh\left\vert \xi\right\vert
}-\frac{1}{\left\vert \xi\right\vert ^{2}},\\
q_{2,\xi}\left(  t\right)   &  =\frac{\sinh\left(  \left\vert \xi\right\vert
t\right)  }{\left\vert \xi\right\vert ^{2}\sinh\left\vert \xi\right\vert
}-\frac{t}{\left\vert \xi\right\vert ^{2}}.
\end{align*}

We first deal with the case of $\vartheta=0.$ Taking Fourier transform in
$\left(  \ref{L}\right)  $in $\mathbb{R}^{2}$ with respect to the $z_{1}%
,z_{2}$, where $l=\sqrt{z_{1}^{2}+z_{2}^{2}},$ variable, we are lead to
\[
\left\{
\begin{array}
[c]{l}%
\partial_{t}^{2}\hat{\Phi}-\left\vert \xi\right\vert ^{2}\hat{\Phi}=\hat
{g}_{1}+\hat{g}_{2}t,\text{ }t\in\left[  -1,1\right]  ,\\
\hat{\Phi}\left(  -1,\xi\right)  =\hat{\Phi}\left(  1,\xi\right)  =0,\\
\partial_{t}\hat{\Phi}\left(  -1,\xi\right)  -\hat{g}_{2}\left(  \xi\right)
=\hat{\gamma}_{-1}\left(  \xi\right)  ,\\
\partial_{t}\hat{\Phi}\left(  1,\xi\right)  -\hat{g}_{2}\left(  \xi\right)
=\hat{\gamma}_{1}\left(  \xi\right)  \text{.}%
\end{array}
\right.
\]
It follows that the function $\hat{\Phi}$ has the form
\[
\hat{\Phi}\left(  t,\xi\right)  =\hat{g}_{1}\left(  \xi\right)  q_{1,\xi
}\left(  t\right)  +\left(  \Delta_{\Gamma_{0}}g_{2}\right)  ^{\symbol{94}%
}\left(  \xi\right)  q_{2,\xi}\left(  t\right)  .
\]
In view of the boundary condition at $t=\pm1,$ we get
\[
\left\{
\begin{array}
[c]{l}%
\hat{g}_{1}\left(  \xi\right)  q_{1,\xi}^{\prime}\left(  -1\right)  +\left(
\Delta_{\Gamma_{0}}g_{2}\right)  ^{\symbol{94}}\left(  \xi\right)  q_{2,\xi
}\left(  -1\right)  -\hat{g}_{2}\left(  \xi\right)  =\hat{\gamma}_{-1}\left(
\xi\right)  ,\\
\hat{g}_{1}\left(  \xi\right)  q_{1,\xi}^{\prime}\left(  1\right)  +\left(
\Delta_{\Gamma_{0}}g_{2}\right)  ^{\symbol{94}}q_{2,\xi}^{\prime}\left(
1\right)  -\hat{g}_{2}\left(  \xi\right)  =\hat{\gamma}_{1}\left(  \xi\right)
.
\end{array}
\right.
\]
Using the symmetry of $q_{1,\xi}$ and $q_{2,\xi}$, we obtain
\[
\left\{
\begin{array}
[c]{l}%
\hat{g}_{1}\left(  \xi\right)  =\ \frac{\hat{\gamma}_{1}\left(  \xi\right)
-\hat{\gamma}_{-1}\left(  \xi\right)  }{2q_{1,\xi}^{\prime}\left(  1\right)
},\\
\left(  \Delta_{\Gamma_{0}}g_{2}\right)  ^{\symbol{94}}\left(  \xi\right)
=\frac{2\hat{g}_{2}\left(  \xi\right)  +\hat{\gamma}_{-1}\left(  \xi\right)
+\hat{\gamma}_{1}\left(  \xi\right)  }{2q_{2,\xi}^{\prime}\left(  1\right)  }.
\end{array}
\right.
\]
Taking inverse Fourier transform, using the analyticity and asymptotic
behavior of $q_{1,\xi}^{\prime}\left(  1\right)  $ and $q_{2,\xi}^{\prime
}\left(  1\right)  $ (it behaves like $\left\vert \xi\right\vert ^{-1}$ as
$\left\vert \xi\right\vert \rightarrow+\infty,$ see Lemma 16 of \cite{Liu2}%
)$,$ we get the desired weighted norm estimate. Note that in \cite{Liu2}, we
have considered the algebraically weighted norms, here we are dealing with
exponentially weighted norms.

We now turn to the general case $\vartheta\neq0.$ Let us use $\Phi^{\ast}$ to
denote the solution of the problem
\[
\left\{
\begin{array}
[c]{l}%
\partial_{t}^{2}\Phi^{\ast}+\partial_{l}^{2}\Phi^{\ast}+\frac{1}{l}%
\partial_{l}\Phi^{\ast}=\vartheta,\text{ }\left(  t,l\right)  \in\left[
-1,1\right]  \times\mathbb{[}0,+\infty).\\
\Phi^{\ast}\left(  \pm1,l\right)  =0.
\end{array}
\right.
\]
Note that $\Phi^{\ast}$ decays like $O\left(  e^{-\beta l}\right)  .$ Then we
can write $\Phi=\Phi^{\ast}+\tilde{\Phi}$ and proceed similarly as before.
\end{proof}

For function $\eta$ define on $\mathbb{R}^{+},$ we use the notation
\[
\left\Vert \eta\right\Vert _{s}:=\left\Vert \eta\right\Vert _{C^{2,\alpha
}\left(  \left[  s,+\infty\right]  \right)  },\text{ }\left\Vert
\eta\right\Vert _{\left[  c,d\right]  }:=\left\Vert \eta\right\Vert
_{C^{2,\alpha}\left(  \left[  c,d\right]  \right)  }.
\]
Moreover, for function $\tilde{\eta}$ define on $\left[  -1,1\right]
\times\mathbb{R}^{+},$ we set
\begin{align*}
\left\Vert \tilde{\eta}\right\Vert _{s,\symbol{94}}  &  :=\left\Vert
g\right\Vert _{C^{2,\alpha}\left(  \left[  -1,1\right]  \times\lbrack
s,+\infty\right)  },\\
\left\Vert \tilde{\eta}\right\Vert _{\left[  c,d\right]  ,\symbol{94}}  &
:=\left\Vert g\right\Vert _{C^{2,\alpha}\left(  \left[  -1,1\right]
\times\left[  c,d\right]  \right)  }.
\end{align*}

\begin{proof}
[Proof of Proposition \ref{uni}]Let $\beta>0$ be a fixed small constant. We
first observe that in the perturbation term $P,$ the term $2\nabla\zeta
_{a}\nabla\phi+\Delta\zeta_{a}\phi$ is compactly supported. On the other hand,
by the estimate (non-optimal) $\left\vert p^{\prime}\right\vert \leq C,$ and
the formula
\[
H_{\Gamma_{0}}=\frac{1}{l}\left(  \frac{lp^{\prime}}{\sqrt{1+p^{\prime2}}%
}\right)  ^{\prime},
\]
we find that the rest terms in $P$ can be estimated as
\[
\left(  I_{2}-I_{1}+\Delta_{\Gamma_{0}}\bar{W}-\Delta_{\Gamma_{s}}\bar
{W}\right)  \zeta_{a}=o\left(  \left\vert H_{\Gamma_{0}}\right\vert
+\left\vert f\right\vert +\left\vert \phi\right\vert +\left\vert f^{\prime
}\right\vert \right)  +O\left(  l^{-2}\right)  .
\]
We use $\phi^{\ast}$ to denote the solution of the problem
\[
\left\{
\begin{array}
[c]{l}%
\Delta\phi^{\ast}=2\nabla\zeta_{a}\nabla\phi+\Delta\zeta_{a}\phi,\\
\phi^{\ast}\left(  t,l\right)  =0,t=\pm1.
\end{array}
\right.
\]
Writing $\Psi_{a}$ as $\phi^{\ast}+\tilde{\Psi}_{a}$ \ and applying Lemma
\ref{estimate1}, we obtain%
\begin{equation}
\left\Vert \phi\right\Vert _{a+s,\symbol{94}}=O\left(  e^{-\beta s}\left\Vert
\phi\right\Vert _{\left[  a,a+1\right]  ,\symbol{94}}\right)  +O\left(
\left\Vert f\right\Vert _{a}+\left\Vert H_{\Gamma_{0}}\right\Vert
_{C^{0,\alpha}\left(  \left[  a,+\infty\right]  \right)  }+a^{-2}\right)
,s\geq0. \label{L1}%
\end{equation}
On the other hand, from equation $\left(  \ref{3-}\right)  $ and $\left(
\ref{3+}\right)  ,$ we get
\[
\gamma^{-}=O\left(  f^{2}+f^{\prime2}+\left(  \partial_{t}\phi\right)
^{2}\right)  \text{ and }\gamma^{+}=O\left(  \left(  \partial_{t}\phi\right)
^{2}\right)  +o\left(  f\right)  .
\]
Combining this with Lemma \ref{estimate2} and $\left(  \ref{L1}\right)  ,$ we
infer%
\begin{align}
&  \left\Vert f\right\Vert _{a+s}+\left\Vert H_{\Gamma_{0}}\right\Vert
_{C^{0,\alpha}\left(  \left[  a+s,+\infty\right]  \right)  }\nonumber\\
&  =O\left[  e^{-\beta s}\left(  \left\Vert f\right\Vert _{\left[
a,a+1\right]  }+\left\Vert \phi\right\Vert _{\left[  a,a+1\right]
,\symbol{94}}\right)  \right]  +O\left(  a^{-3}\right)  ,s\geq0. \label{L3}%
\end{align}

Now let us define
\[
\theta\left(  a\right)  :=\left\Vert f\right\Vert _{a}+\left\Vert
H_{\Gamma_{0}}\right\Vert _{C^{0,\alpha}\left(  \left[  a,+\infty\right]
\right)  }+\left\Vert \phi\right\Vert _{a,\symbol{94}}.
\]
Fix a constant $d$. From $\left(  \ref{L1}\right)  $ and $\left(
\ref{L3}\right)  ,$ we are led to%
\begin{equation}
\label{e7}\theta\left(  a+d\right)  \leq Ce^{-\beta d}\left(  \theta\left(
a\right)  +\theta\left(  a-d\right)  \right)  +Ca^{-2}.
\end{equation}
Applying this estimate at $a-dj,j=0,1,2,...,\left[  \frac{a-r_{0}}{d}\right]
$, we get%
\begin{align*}
\theta\left(  a\right)   &  \leq Ce^{-\beta d}\left(  \theta\left(
a-d\right)  +\theta\left(  a-2d\right)  \right)  +Ca^{-2}\\
&  \leq C^{2}e^{-\beta d}\left[  e^{-\beta d}\left(  \theta\left(
a-2d\right)  +\theta\left(  a-3d\right)  \right)  +a^{-2}\right] \\
&  +C^{2}e^{-\beta d}\left[  e^{-\beta d}\left(  \theta\left(  a-3d\right)
+\theta\left(  a-4d\right)  \right)  +a^{-2}\right]  +Ca^{-2}\\
&  \leq...\leq\bar{C}a^{-2},
\end{align*}
for another constant $\bar{C},$ provided that $d$ is sufficiently large. With
this decay information at hand, repeating the above arguments, we find that
$\left\vert f^{\prime}\left(  a\right)  \right\vert +\left\vert p^{\prime
}\left(  a\right)  \right\vert =O\left(  a^{-1}\right)  .$ Hence the term
$a^{-2}$ in $\left(  \ref{L1}\right)  $ can be improved to $a^{-3},$ and
$\left(  \ref{e7}\right)  $ can be refined to%
\[
\theta\left(  a+d\right)  \leq Ce^{-\beta d}\left(  \theta\left(  a\right)
+\theta\left(  a-d\right)  \right)  +Ca^{-3}.
\]
Similar arguments as before yield $\theta\left(  a\right)  \leq Ca^{-3},$
which in particular implies
\[
\left\Vert H_{\Gamma_{0}}\right\Vert _{C^{0,\alpha}\left(  \left[
a,+\infty\right]  \right)  }\leq Ca^{-3},\text{ for any }a>r_{0}.
\]
Hence the function $p_{k}$ satisfies%
\[
\left(  \frac{lp_{k}^{\prime}}{\sqrt{1+p_{k}^{\prime2}}}\right)  ^{\prime
}=O\left(  l^{-3}\right)  .
\]
Integrating this equation once, we get $p_{k}^{\prime}\left(  l\right)
=\frac{\bar{k}}{l}+O\left(  l^{-3}\right)  ,$ for some constant $\bar{k}.$
Necessarily $\bar{k}=k.$ Integrating once more, we get
\[
p_{k}\left(  l\right)  =k\ln l+b_{k}+O\left(  l^{-1}\right)  .
\]

To show that $\left\vert b_{k}\right\vert \leq C$, it remains to prove that
$\left\vert p_{k}\left(  r_{0}\right)  \right\vert \leq C.$ If this were not
true, then after suitable translation along the $z$ axis, a subsequence of
$W_{k}$ converges to a solution $w$ of the free boundary problem $\left(
\ref{EN}\right)  ,$ with $\left\{  \left(  r,z\right)  :\left\vert w\left(
r,z\right)  \right\vert <1\right\}  =\left\{  \left(  r,z\right)
:c_{1}<r<c_{2}<+\infty\right\}  .$ This is not possible. The proof is thus completed.
\end{proof}

\begin{lemma}
\label{k}As $k\rightarrow0,$ $W_{k}$ converges in $C_{loc}^{0,\alpha}$ to\ the
function $\mathcal{H}\left(  z-1\right)  $ in the upper half space.
\end{lemma}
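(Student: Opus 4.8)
The plan is a compactness and classification argument: using the uniform gradient bound I would pass to a subsequential limit $w$ of the $W_k$ as $k\to 0$, show that $w$ is a monotone solution of \eqref{EN} with asymptotically flat free boundary, conclude by a sliding argument that $w$ equals a one-dimensional profile $\mathcal{H}(z-c_0-1)$, and finally identify $c_0=0$ using Lemma \ref{dis}.

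First I would set up the compactness. By \cite[Proposition 2.1]{Wangke}, $|\nabla W_k|\le 1$, so $\{W_k\}$ is bounded in $C^{0,1}_{loc}$ and, along a subsequence $k_j\to 0$, $W_{k_j}\to w$ in $C^{0,\alpha}_{loc}$ in the upper half space. The free boundaries are uniformly nondegenerate: the nondegeneracy constants obtained in Section \ref{Sec3} are independent of $a$, hence of $k$, and the same holds at the neck on the Neumann boundary $\{z=0\}$ after even reflection across $\{z=0\}$. Hence $w$ is again a solution of \eqref{EN}, monotone ($\partial_r w\le 0$ and $\partial_z w\ge 0$ in $\{|w|<1\}\cap\{z>0\}$) with $\{w=-1\}\ne\varnothing$. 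Writing $\digamma^-_{W_k}=\{z=p_k(r)\}$ as in Proposition \ref{uni}, the set $\{W_k=-1\}$ is nonempty and, being symmetric in $z$, meets $\{z=0\}$; so $\digamma^-_{W_k}$ has a neck point $Q_k\in\digamma^-_{W_k}\cap\{z=0\}$, which is the point of $\digamma^-_{W_k}$ closest to the origin, whence $|Q_k|=\operatorname{dist}(O,\digamma^-_{W_k})\le C$ by Lemma \ref{dis}. Passing to a further subsequence, $Q_{k_j}\to Q_\infty$ with $Q_\infty\in\{z=0\}$ and $|Q_\infty|\le C$, and by the uniform nondegeneracy $Q_\infty\in\digamma^-_w$.

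Next, $w$ is one-dimensional. From Proposition \ref{uni}, $p_{k_j}(r)=k_j\ln r+b_{k_j}+O(r^{-1})$ with $r_0$ and the $O(r^{-1})$ term uniform in $k$ and $|b_{k_j}|\le C$; letting $j\to\infty$ gives $\digamma^-_w=\{z=p(r)\}$ with $p(r)=b_0+O(r^{-1})$, $b_0=\lim_j b_{k_j}$, so $\digamma^-_w$ is asymptotic to a horizontal plane. Combining the monotonicity of $w$ with this asymptotic flatness, a sliding argument comparing $w$ with the one-parameter family $\mathcal{H}(z-c-1)$, $c\in\mathbb{R}$ (carried out as the moving plane argument for \eqref{EN} in the proof of Lemma \ref{dis}, Case 1, with interior contact handled by the strong maximum principle and free boundary contact by Hopf's lemma), forces $w\equiv\mathcal{H}(z-c_0-1)$ in the upper half space, hence $\digamma^-_w=\{z=c_0\}$ with $c_0\ge 0$ since $\{w=-1\}\ne\varnothing$. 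But $Q_\infty\in\digamma^-_w$ lies on $\{z=0\}$, so $c_0=0$; thus $w=\mathcal{H}(z-1)$. Since this limit is independent of the subsequence, $W_k\to\mathcal{H}(z-1)$ in $C^{0,\alpha}_{loc}$ in the upper half space.

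The main obstacle is the uniform passage to the limit: one must verify that the free boundary condition $|\nabla W_k|=1$ and the nondegeneracy estimate survive the limit $k\to 0$ with constants independent of $k$, in particular at the neck $Q_k$, which sits on $\{z=0\}$ and may drift toward the symmetry axis $\{r=0\}$ as $k\to 0$; this is where the even reflection across $\{z=0\}$ and the $k$-independent estimates of Section \ref{Sec3} have to be combined. A related point is establishing that $w$ is genuinely one-dimensional, and not merely a monotone solution with a curved, asymptotically flat free boundary; the sliding argument handles this, modulo the usual care at tangential free boundary contact, exactly as in the proof of Lemma \ref{dis}.
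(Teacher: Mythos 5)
Your proof is correct and follows the paper's overall architecture (compactness of $\{W_k\}$ via the uniform bound from the $W_k$-analogue of Lemma~\ref{dis}, asymptotic flatness of the limiting free boundary via Proposition~\ref{uni}, sliding against $\mathcal{H}(z-\beta)$ to show the limit is one-dimensional, and then pinning down the translation constant). The one place you genuinely diverge is the last step: the paper fixes $\alpha=1$ by observing that for small $k$ the $-1$ free boundary of $W_k$ meets the $r$-axis while the $+1$ free boundary does not, and then sketching a blow-up contradiction at $(0,\alpha-1)$ in case $\alpha>1$; you instead track the neck point $Q_k\in\digamma^-_{W_k}\cap\{z=0\}$, use the $k$-independent nondegeneracy (valid at $\{z=0\}$ after even reflection) to pass it to a free boundary point $Q_\infty$ of $w$, and conclude $c_0=0$. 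Your route is more concrete and arguably cleaner; the nondegeneracy at $Q_\infty$ is exactly the ingredient that rules out $Q_\infty$ lying in the interior of $\{w=-1\}$ when $Q_k$ drifts to the axis, so that step is essential and you correctly flag it. One small polish: the claim that $\digamma^-_{W_k}$ meets $\{z=0\}$ should be attributed to the monotonicity $\partial_r W_k<0$ (so $W_k(r,0)\to-1$ as $r\to\infty$ while $W_k(0,0)>-1$), rather than to $z$-symmetry of $\{W_k=-1\}$ alone, and the uniform bound $|Q_k|\le C$ comes from the $W_k$-version of Lemma~\ref{dis} stated at the start of the paper's proof, not from Lemma~\ref{dis} itself. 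Also note the paper first derives that the $+1$ free-boundary graph $f$ is exactly constant from the monotone quantity $r f'/\sqrt{1+f'^2}$ together with boundedness of $f$, whereas you get only asymptotic flatness of $p$ from Proposition~\ref{uni} and let the sliding do the remaining work; both are fine.
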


\begin{proof}
Similar as Lemma \ref{dis}, we can prove that the distance of the free
boundary of $W_{k}$ to the origin is uniformly bounded for $k.$

Using Lemma \ref{uni}, we deduce that $W_{k}$ converges to a solution
$W_{\infty}.$ Since for each $W_{k},$ its free boundary point is
nondegenerated, the free boundary of $W_{\infty}$ is smooth away from the
origin, We use $z=f\left(  r\right)  $ to represent the curve%
\[
\partial\left\{  \left\vert W_{\infty}\right\vert <1\right\}  \cap\left\{
W_{\infty}=1\right\}  .
\]
The estimate in Lemma \ref{uni} tells us that $f$ is bounded. Since the
surface $z=f\left(  r\right)  $ has nonnegative mean curvature, with respect
to the unit normal pointing towards the positive $z$ direction, we get
\begin{equation}
\left[  \frac{rf^{\prime}\left(  r\right)  }{\sqrt{1+f^{\prime}\left(
r\right)  ^{2}}}\right]  ^{\prime}\geq0. \label{con}%
\end{equation}
On the other hand, we know that $f\left(  r\right)  \rightarrow C_{0}$ for
some constant $C_{0}.$ This together with $\left(  \ref{con}\right)  $ implies
that
\[
\lim_{r\rightarrow+\infty}f\left(  r\right)  \leq\max_{r\in\lbrack0,+\infty
)}f\left(  r\right)  .
\]
Then a sliding plane type argument using solutions of the form $\mathcal{H}%
\left(  z-\beta\right)  $ implies that $f$ is a constant and
\[
W_{\infty}\left(  z\right)  =\mathcal{H}\left(  z-\alpha\right)  ,
\]
for some $\alpha\geq1.$ This argument also tells us that for $k$ small,
$\partial\left\{  W_{k}<1\right\}  \cap\left\{  W_{k}=-1\right\}  $ intersects
with the $r$ axis and $\partial\left\{  W_{k}<1\right\}  \cap\left\{
W_{k}=1\right\}  $ does not intersect with the $r$ axis.

If $\alpha>1,$ then we could use a blow up argument for $W_{k}$ near the point
$\left(  0,\alpha-1\right)  $ to get a contradiction. This completes the proof.
\end{proof}

With all these preparation, we are in a position to prove Theorem \ref{main}
in $3D.$

\begin{proof}
[Proof of Theorem \ref{main} in $\mathbb{R}^{3}$]Using Lemma $\ref{k},$ we
would like to perform a blow up analysis on $W_{k}$ to get a solution to the
one phase free boundary problem. Indeed, let $\rho_{k}$ be the distance of the
free boundary (the part where $W_{k}=-1$) to the origin. Then by Lemma
\ref{k}, $\rho_{k}\rightarrow0$ as $k\rightarrow0.$ Let us define
\[
\psi_{k}\left(  X\right)  =\frac{W_{k}\left(  \rho_{k}X\right)  +1}{\rho_{k}%
}.
\]
Then $\psi_{k}$ converges to a solution $u$ of the one-phase free boundary
problem. This is the desired solution. The asymptotic behavior $\left(
\ref{lim}\right)  $ follows from the positivity of the mean curvature of the
free boundary.
\end{proof}

\begin{remark}
The Hauswirth-Helein-Pacard solution in 2D can also be constructed using our
variational and blow up technique. Note that in 2D, we need to construct
solutions to $\left(  \ref{EN}\right)  $ with nodal set asymptotic to straight
lines at infinity.
\end{remark}

\section{The proof of Theorem \ref{main} for dimension $n>3$\label{Sec6}}

In this section, we assume $n>3$. The proof is essentially same as before,
except that at some points we need to modify certain estimates.

As we already show in the $3D$ case, the construction of solutions to our free
boundary problem is closely related to the geometry of the catenoids. Let us
first of all recall the definition of catenoids in $\mathbb{R}^{n}$, which are
codimension one minimal submanifolds. Let $\phi$ be the solution of%
\begin{equation}
\left\{
\begin{array}
[c]{l}%
\frac{\phi^{\prime\prime}}{1+\phi^{\prime2}}-\frac{n-2}{\phi}=0,\\
\phi\left(  0\right)  =1,\phi^{\prime}\left(  0\right)  =0.
\end{array}
\right.  \label{c1}%
\end{equation}
Then the manifold given by $r:=\phi\left(  z\right)  $ is a minimal
submanifold, called catenoid. The principle curvatures are given by
\[
k_{1}=...=k_{n-2}=\frac{1}{\phi\left(  1+\phi^{\prime2}\right)  ^{\frac{1}{2}%
}},\text{ \ }k_{n-1}=-\frac{\phi^{\prime\prime}}{\left(  1+\phi^{\prime
2}\right)  ^{\frac{3}{2}}}.
\]
Introducing a parametrization:
\begin{equation}
r=\left(  \eta\left(  s\right)  \right)  ^{\frac{1}{n-2}},z=\int_{0}%
^{s}\left(  \eta\left(  t\right)  \right)  ^{\frac{3-n}{n-2}}dt,\label{para}%
\end{equation}
we find that $\eta$ satisfies%
\[
\frac{1}{n-2}\eta^{\prime\prime}\frac{1}{1+\left(  \frac{1}{n-2}\eta^{\prime
}\right)  ^{2}}-\frac{n-2}{\eta}=0.
\]
From this we get $\eta\left(  s\right)  =\cosh\left(  \left(  n-2\right)
s\right)  .$

In the upper $z$ space, we can also write this catenoid as
\begin{equation}
z=\bar{\phi}\left(  r\right)  ,r\in\lbrack1,+\infty).\label{fibar}%
\end{equation}
Then there are constants $c_{n},c_{n}^{\prime}$ such that as $r\rightarrow
+\infty,$
\[
\bar{\phi}\left(  r\right)  \sim c_{n}-c_{n}^{\prime}r^{3-n}.
\]
In terms of $\bar{\phi},$ the equation in $\left(  \ref{c1}\right)  $ can be
written as
\[
\frac{\bar{\phi}^{\prime\prime}\left(  r\right)  }{1+\bar{\phi}^{\prime
2}\left(  r\right)  }+\frac{\left(  n-2\right)  \bar{\phi}^{\prime}\left(
r\right)  }{r}=0.
\]
Note that for each $\rho>0,$ the rescaled function $z=\rho\bar{\phi}\left(
\rho^{-1}r\right)  $ also gives us a catenoid. We refer to \cite{Tam} for more
detailed properties on catenoids, including their Morse index.

For each $\alpha>0,$ we shall use $r=\phi_{\alpha}\left(  z\right)  $ to
represent the catenoid which satisfies $\phi_{\alpha}\left(  0\right)
=\alpha.$ This catenoid will also be written as $z=\bar{\phi}_{\alpha}\left(
r\right)  $. On the other hand, we use $z=\bar{\phi}_{\alpha}^{\ast}\left(
r\right)  $ to represent the catenoid with
\[
\lim_{r\rightarrow+\infty}\bar{\phi}_{\alpha}^{\ast}\left(  r\right)  =\alpha.
\]
This catenoid will also be written as $r=\phi_{\alpha}^{\ast}\left(  z\right)
.$

Let $k>1$ be a parameter. For each $a$ large, let
\[
\Omega_{a}:=\left\{  \left(  r,z\right)  :r\in\left[  0,a\right]  ,z\in\left[
0,b_{\varepsilon}\right]  \right\}  ,
\]
where $b_{\varepsilon}=\bar{\phi}_{k}^{\ast}\left(  a\right)  +2+\delta
_{\varepsilon}$ and $\delta_{\varepsilon}$ is defined by $\left(
\ref{delta}\right)  .$ Set $L_{a}:=L_{1,a}\cup L_{2,a},$ where
\[
L_{1,a}:=\left\{  \left(  a,z\right)  :z\in\left[  0,b_{\varepsilon}\right]
\right\}  ,\quad\mbox{and}\quad L_{2,a}:=\left\{  \left(  r,b_{\varepsilon
}\right)  :r\in\left[  0,a\right]  \right\}  .
\]
We then define a function $\omega=\omega\left(  r,z\right)  ,$ depending on
the parameter $\varepsilon$ and $a,$ to be
\[
\omega\left(  r,z\right)  =w_{\varepsilon,\bar{\phi}_{k}^{\ast}\left(
a\right)  -\varepsilon}\left(  z-\bar{\phi}_{k}^{\ast}\left(  a\right)
\right)  .
\]
Here, same as before, $w_{\varepsilon,l}$ is the function appeared in Lemma
\ref{sub}.

For $\varepsilon$ sufficiently small, we need to construct mountain pass
solutions for the problem
\[
\left\{
\begin{array}
[c]{l}%
-\partial_{r}^{2}u-\frac{1}{r}\partial_{r}u-\partial_{z}^{2}u+\frac{1}%
{2}F_{\varepsilon}^{\prime}\left(  u\right)  =0\text{ in }\Omega_{a},\\
\partial_{r}u\left(  0,z\right)  =0,\partial_{z}u\left(  r,0\right)  =0,\\
u=\omega\text{, on }L_{a}.
\end{array}
\right.
\]
Similarly as before, using parabolic flow, we can first of all construct two
solutions $u_{1},u_{2},$ where $u_{1}$ has almost horizontal nodal set and
$u_{2}$ has almost vertical nodal set. Moreover, $\partial_{r}u_{i}<0$ and
$\partial_{z}u_{i}>0$ in $\Omega_{a}.$ Furthermore, we can assume
\[
\int_{\Omega_{a}}\left(  \left\vert \nabla u_{i}\right\vert ^{2}%
+F_{\varepsilon}\left(  u_{i}\right)  \right)  \leq\frac{4}{n-1}%
a^{n-1}+o\left(  1\right)  ,
\]
where $o\left(  1\right)  $ is a term tending to $0$ as $\varepsilon
\rightarrow0.$

Let $\mathcal{E}$ be the set of $C^{1}$ functions $g$ satisfying the following properties:

\noindent(I) $u_{1}<g<u_{2}$ in $\Omega_{a}$,

\noindent(II) $\partial_{z}g>0;\partial_{r}g<0,$ in $\Omega_{a},$

\noindent(III) $g|_{L_{a}}=\omega,$

\noindent(IV)$\partial_{r}g\left(  0,z\right)  =0,\partial_{z}g\left(
r,0\right)  =0.$

The following geometric property of the catenoids will be used later on.

\begin{lemma}
Let $k>1$ be a fixed constant. Suppose $a$ is large. Then for each $c<\phi
_{k}^{\ast}\left(  0\right)  ,$
\[
\int_{c}^{a}\sqrt{1+\left(  \bar{\phi}_{c}^{\prime}\left(  r\right)  \right)
^{2}}r^{n-2}dr-\frac{a^{n-1}}{n-1}\geq\frac{\delta}{2}c^{n-1},
\]
where
\[
\delta=\frac{1}{2\left(  n-2\right)  }\left(  1-\left(  \frac{1}{2}\right)
^{\frac{1}{n-2}}\right)  .
\]

\end{lemma}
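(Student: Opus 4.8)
The plan is to reduce the claimed inequality, via the scaling invariance of $\left(\ref{c1}\right)$, to a one-variable statement about the standard catenoid, to evaluate the resulting area integral exactly using the arclength parametrization $\left(\ref{para}\right)$, and then to extract a monotone quantity with an explicit finite limit by an integration by parts.

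First, since the catenoid with waist radius $c$ is the $c$-dilate of the one with waist radius $1$, we have $\bar{\phi}_{c}\left(r\right)=c\bar{\phi}_{1}\left(r/c\right)$ and $\bar{\phi}_{c}^{\prime}\left(r\right)=\bar{\phi}_{1}^{\prime}\left(r/c\right)$, so the substitution $r=c\rho$ gives $\int_{c}^{a}\sqrt{1+\left(\bar{\phi}_{c}^{\prime}\left(r\right)\right)^{2}}\,r^{n-2}\,dr=c^{n-1}\int_{1}^{a/c}\sqrt{1+\left(\bar{\phi}_{1}^{\prime}\left(\rho\right)\right)^{2}}\,\rho^{n-2}\,d\rho$. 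Parametrizing the unit catenoid $z=\bar{\phi}_{1}\left(\rho\right)$ by $\left(\ref{para}\right)$ with $\eta\left(s\right)=\cosh\left(\left(n-2\right)s\right)$, one computes $\sqrt{d\rho^{2}+dz^{2}}=\rho\,ds$, hence $\sqrt{1+\left(\bar{\phi}_{1}^{\prime}\left(\rho\right)\right)^{2}}\,\rho^{n-2}\,d\rho=\left(\cosh\left(\left(n-2\right)s\right)\right)^{\frac{n-1}{n-2}}ds$, with $\rho=1\leftrightarrow s=0$ and $\rho=a/c\leftrightarrow s=S$ where $\cosh\left(\left(n-2\right)S\right)=\left(a/c\right)^{n-2}$, so that $\frac{a^{n-1}}{n-1}=\frac{c^{n-1}}{n-1}\left(\cosh\left(\left(n-2\right)S\right)\right)^{\frac{n-1}{n-2}}$. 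Thus the assertion of the lemma is equivalent to $G\left(S\right)\geq\frac{\delta}{2}$, where
\[
G\left(S\right):=\int_{0}^{S}\left(\cosh\left(\left(n-2\right)s\right)\right)^{\frac{n-1}{n-2}}ds-\frac{1}{n-1}\left(\cosh\left(\left(n-2\right)S\right)\right)^{\frac{n-1}{n-2}}.
\]
Since $0<c<\phi_{k}^{\ast}\left(0\right)$ and $\phi_{k}^{\ast}\left(0\right)$ depends only on $k$ and $n$, we have $S\to+\infty$ as $a\to+\infty$, uniformly in $c$; hence it suffices to bound $G\left(S\right)$ below for $S$ large.

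A routine integration by parts, using $\frac{n-1}{n-2}\left(n-2\right)=n-1$, gives the exact identity
\[
G\left(S\right)=\frac{1}{n-1}\left[\int_{0}^{S}\left(\cosh\left(\left(n-2\right)s\right)\right)^{-\frac{n-3}{n-2}}ds-\left(\cosh\left(\left(n-2\right)S\right)\right)^{\frac{1}{n-2}}e^{-\left(n-2\right)S}\right].
\]
In particular $G^{\prime}\left(S\right)=\left(\cosh\left(\left(n-2\right)S\right)\right)^{\frac{1}{n-2}}e^{-\left(n-2\right)S}>0$, so $G$ is strictly increasing, and since $n>3$ both the integral and the correction term converge, so $G\left(S\right)\uparrow L:=\frac{1}{n-1}\int_{0}^{+\infty}\left(\cosh\left(\left(n-2\right)s\right)\right)^{-\frac{n-3}{n-2}}ds$ as $S\to+\infty$. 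It therefore suffices to prove the strict inequality $L>\frac{\delta}{2}$, for then $G\left(S\right)\geq\frac{\delta}{2}$ for all $S$ beyond a fixed threshold, i.e. for $a$ large uniformly in $c$, as claimed.

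Finally, $L>\frac{\delta}{2}$ is an elementary, dimension-uniform estimate. Substituting $u=\cosh\left(\left(n-2\right)s\right)$,
\[
L=\frac{1}{\left(n-1\right)\left(n-2\right)}\int_{1}^{+\infty}\frac{u^{-\frac{n-3}{n-2}}}{\sqrt{u^{2}-1}}\,du\geq\frac{2^{-\frac{n-3}{n-2}}\operatorname{arccosh}2}{\left(n-1\right)\left(n-2\right)}>\frac{1}{2\left(n-1\right)\left(n-2\right)},
\]
using $2^{-\frac{n-3}{n-2}}=\frac{1}{2}\cdot2^{\frac{1}{n-2}}>\frac{1}{2}$ and $\operatorname{arccosh}2=\ln\left(2+\sqrt{3}\right)>1$. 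On the other hand, $\frac{\delta}{2}=\frac{1-2^{-1/\left(n-2\right)}}{4\left(n-2\right)}\leq\frac{\ln2}{4\left(n-2\right)^{2}}$ because $1-e^{-x}\leq x$, and $\frac{1}{2\left(n-1\right)\left(n-2\right)}>\frac{\ln2}{4\left(n-2\right)^{2}}$ is equivalent to $\frac{2\left(n-2\right)}{n-1}>\ln2$, which holds for all $n\geq4$. Hence $L>\frac{\delta}{2}$ and the proof is complete. The one real obstacle is that the two terms defining $G\left(S\right)$ each grow like $e^{\left(n-1\right)S}$, so no crude pointwise estimate on $\left(\cosh\left(\left(n-2\right)s\right)\right)^{\frac{n-1}{n-2}}$ survives for $S$ large; the integration-by-parts identity is exactly what cancels these leading exponentials and reveals the finite limit $L$.
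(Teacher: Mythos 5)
Your proof is correct, and it takes a genuinely different route from the paper's after the common first step (the rescaling $r=c\rho$ and the arclength parametrization $\eta(s)=\cosh((n-2)s)$, which both arguments use to reduce to the unit catenoid). The paper continues by substituting $x=\sinh((n-2)s)$, turning the area integral into $\frac{1}{n-2}\int_0^{\sinh((n-2)s_0)}(1+x^2)^{\frac{1}{2(n-2)}}dx$, and then bounding the integrand pointwise from below piecewise (by $1$ on $[0,\tfrac12]$, by $x^{1/(n-2)}$ on $[\tfrac12,\infty)$); the power integral reproduces $\tfrac{a^{n-1}}{n-1}$ up to $O(a^{3-n})$ and leaves a positive constant. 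You instead define the excess $G(S)$ directly, notice the exact derivative $G'(S)=\cosh^{1/(n-2)}((n-2)S)\,e^{-(n-2)S}>0$, and hence the exact identity after integration by parts; this shows $G$ increases to a finite limit $L$ (finiteness uses $n>3$), and you then estimate $L>\delta/2$ explicitly. I checked the integration-by-parts identity, the value $G(0)=-\tfrac{1}{n-1}$, the $u=\cosh$ substitution giving $L=\tfrac{1}{(n-1)(n-2)}\int_1^\infty u^{-\frac{n-3}{n-2}}(u^2-1)^{-1/2}du$, the lower bound $L>\tfrac{1}{2(n-1)(n-2)}$, the upper bound $\tfrac{\delta}{2}\le\tfrac{\ln 2}{4(n-2)^2}$, and the comparison $\tfrac{2(n-2)}{n-1}>\ln 2$ for $n\ge4$ --- all correct. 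Relative to the paper your argument buys an exact identification of the sharp asymptotic constant $L$ and a cleaner monotonicity statement (no ad hoc split point); the paper's version is more elementary in that it avoids the integration-by-parts cancellation and works purely with pointwise bounds on a nonnegative integrand, at the cost of a non-sharp constant. One cosmetic remark: for $n=3$ your $L$ diverges (the $n=3$ case is treated separately in the paper via $\ln r$ asymptotics), so it is worth stating explicitly, as you implicitly do, that the argument here is for $n>3$.
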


\begin{proof}
We first consider the case of $c=1.$ Let $s_{0}$ be the constant defined by
\[
\cosh\left(  \left(  n-2\right)  s_{0}\right)  =a^{n-2}.
\]
Using the parametrization $\left(  \ref{para}\right)  ,$ we compute
\begin{align*}
\int_{1}^{a}\sqrt{1+\bar{\phi}_{1}^{\prime2}\left(  r\right)  }r^{n-2}dr  &
=\int_{0}^{\bar{\phi}_{1}\left(  a\right)  }\sqrt{1+\phi_{1}^{\prime2}\left(
z\right)  }\phi_{1}^{n-2}\left(  z\right)  dz\\
&  =\int_{0}^{s_{0}}\sqrt{1+\sinh^{2}\left(  \left(  n-2\right)  s\right)
}\cosh^{\frac{1}{n-2}}\left(  \left(  n-2\right)  s\right)  ds\\
&  =\frac{1}{n-2}\int_{0}^{\sinh\left(  \left(  n-2\right)  s_{0}\right)
}\left(  1+x^{2}\right)  ^{\frac{1}{2\left(  n-2\right)  }}dx\\
&  \geq\frac{1}{n-2}\int_{\frac{1}{2}}^{\sinh\left(  \left(  n-2\right)
s_{0}\right)  }x^{\frac{1}{n-2}}dx+\frac{1}{2\left(  n-2\right)  }.
\end{align*}
It follows that
\begin{align*}
&  \int_{1}^{a}\sqrt{1+\bar{\phi}_{1}^{\prime2}\left(  r\right)  }r^{n-2}dr\\
&  \geq\frac{1}{n-1}a^{n-1}+\delta+O\left(  a^{3-n}\right)  ,
\end{align*}
where $\delta=\frac{1}{2\left(  n-2\right)  }\left(  1-\left(  \frac{1}%
{2}\right)  ^{\frac{1}{n-2}}\right)  .$

Now since $\bar{\phi}_{c}\left(  r\right)  =c\bar{\phi}_{1}\left(
c^{-1}r\right)  ,$ \ we have
\begin{align*}
\int_{c}^{a}\sqrt{1+\bar{\phi}_{c}^{\prime2}\left(  r\right)  }r^{n-2}dr  &
=\int_{c}^{a}\sqrt{1+\bar{\phi}_{1}^{\prime2}\left(  c^{-1}r\right)  }%
r^{n-2}dr\\
&  =c^{n-1}\int_{1}^{c^{-1}a}\sqrt{1+\bar{\phi}_{1}^{\prime2}\left(  r\right)
}r^{n-2}dr\\
&  \geq c^{n-1}\left(  \frac{1}{n-1}c^{1-n}a^{n-1}+\delta+O\left(
a^{3-n}c^{n-3}\right)  \right) \\
&  =\frac{1}{n-1}a^{n-1}+\frac{\delta}{2}c^{n-1}.
\end{align*}
This is the desired estimate.
\end{proof}

\begin{lemma}
\label{a}Let $a$ be a large constant. There exists $\varepsilon_{0}>0$
depending on $a,$ such that for $\varepsilon<\varepsilon_{0},$ the following
is true: Suppose $\xi\in\mathcal{E}$ and $\xi\left(  \phi_{k-1}\left(
0\right)  ,0\right)  =-1+\varepsilon.$ Then
\[
\int_{\Omega_{a}}\left(  \left\vert \nabla\xi\right\vert ^{2}+F_{\varepsilon
}\left(  \xi\right)  \right)  \geq\frac{4}{n-1}a^{n-1}+\delta_{0},
\]
where $\delta_{0}>0$ is a constant independent of $\xi$ and $\varepsilon.$
\end{lemma}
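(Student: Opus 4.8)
The strategy is a direct adaptation of Proposition \ref{moun} to the higher dimensional setting, replacing the explicit $3D$ catenoid computation with the geometric area lower bound just established in the previous lemma. First I would fix $\xi\in\mathcal{E}$ with $\xi(\phi_{k-1}(0),0)=-1+\varepsilon$, and split the domain according to the sign of $\xi$: write $\Omega_a^{+}=\{\xi>0\}$ and $\Omega_a^{-}=\{\xi<0\}$. On $\Omega_a^{+}$, the coarea formula together with the monotonicity of $\xi$ in $r$ and $z$ gives
\[
\int_{\Omega_a^{+}}\left(|\nabla\xi|^2+F_\varepsilon(\xi)\right)\geq 2\int_0^1 A(s)\sqrt{F_\varepsilon(s)}\,ds\geq\left(\frac{a^{n-1}}{n-1}-C\right)e_\varepsilon\cdot\tfrac12,
\]
where $A(s)$ denotes the $(n-1)$-dimensional area of the level set $\{\xi=s\}$ and $C$ is independent of $a,\varepsilon$; here I use that a monotone level surface meeting the outer boundary $L_{1,a}$ encloses essentially the full "disk" of radius $a$, so $A(s)\geq\frac{a^{n-1}}{n-1}-C$ for $s\in(0,1)$.

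For the negative phase the key point is that the condition $\xi(\phi_{k-1}(0),0)=-1+\varepsilon$ forces the level set $\{\xi=-1+\varepsilon\}$ (or any level set close to $-1$) to pass near the point $(\phi_{k-1}(0),0)$ on the axis, so its profile $z=\eta(r)$ satisfies $\eta(\phi_{k-1}(0))\approx 0$ while $\eta(a)\approx b_\varepsilon\approx\bar\phi_k^{\ast}(a)$. The minimizing-area property of the catenoid $r=\phi_c^{\ast}(z)$ (equivalently $z=\bar\phi_c(r)$) through these two points, which is exactly the content of the lemma preceding this statement applied with $c=\phi_{k-1}(0)$ (taking $c<\phi_k^{\ast}(0)$, valid for $k>1$), then yields
\[
A(s)\geq\int_{c}^{a}\sqrt{1+\bar\phi_c^{\prime2}(r)}\,r^{n-2}dr\cdot|S^{n-2}| \geq \frac{a^{n-1}}{n-1}+\frac{\delta}{2}c^{n-1}-C
\]
for $s$ close to $-1$. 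Integrating against $\sqrt{F_\varepsilon(s)}$ over $s\in(-1,-1+\sigma)$ for a small fixed $\sigma$ (on which $F_\varepsilon\to 1$ and the contribution $\int_{-1}^{-1+\sigma}\sqrt{F_\varepsilon}$ is bounded below by a universal positive constant for $\varepsilon$ small), together with the trivial bound $A(s)\geq\frac{a^{n-1}}{n-1}-C$ for the remaining $s\in(-1+\sigma,0)$, gives
\[
\int_{\Omega_a^{-}}\left(|\nabla\xi|^2+F_\varepsilon(\xi)\right)\geq \frac12\,e_\varepsilon\left(\frac{a^{n-1}}{n-1}-C\right)+c_1\,\delta\,c^{n-1}
\]
for some universal $c_1>0$. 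Adding the two halves and using $e_\varepsilon\to 4$ as $\varepsilon\to 0$, the main terms combine to $\frac{4}{n-1}a^{n-1}$ (up to a bounded error absorbed by the gain term), and the positive gain $c_1\delta c^{n-1}=c_1\delta\,\phi_{k-1}(0)^{n-1}>0$, which depends only on $k$ and $n$ and not on $\xi$ or $\varepsilon$, provides the required constant $\delta_0$.

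\textbf{Main obstacle.} The delicate point is not the area estimates themselves but verifying that the normalization $\xi(\phi_{k-1}(0),0)=-1+\varepsilon$ genuinely pins down a level set of $\xi$ near the axis at height $\approx 0$ while the same level set reaches height $\approx b_\varepsilon$ at $r=a$ (so that the catenoid comparison applies with the stated endpoints), and that the $\varepsilon$-dependent discrepancies between $\{\xi=-1+\varepsilon\}$, $\{\xi=-1+\sigma\}$, and the true interface are uniformly controlled — this is where, just as in Case 2 of Proposition \ref{moun}, one must handle the region where $s\in[-1,-1+\frac{\varepsilon}{2}]$ separately using $F_\varepsilon(s)=\varepsilon^{-2}(s+1)^2$ and the monotonicity of $\phi(r):=F_\varepsilon(\xi(r,0))$ to bound the spurious contributions by $o(c^{n-1})$. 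Once those error terms are shown to be negligible against the fixed gain $c_1\delta\,\phi_{k-1}(0)^{n-1}$, the inequality follows by choosing $\varepsilon_0$ small depending on $a$. I would present the $\Omega_a^{-}$ estimate in full detail and merely indicate that the $\Omega_a^{+}$ estimate is identical to the $3D$ case.
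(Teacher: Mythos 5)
Your proposal captures the coarea/catenoid-comparison half of the argument, but it misses a structural case distinction that the proof actually hinges on, and in the case you do consider it uses the wrong comparison radius.

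The hypothesis $\xi(\phi_{k-1}(0),0)=-1+\varepsilon$ only pins down the single level set $\{\xi=-1+\varepsilon\}$ at the point $(\phi_{k-1}(0),0)$. Because $\partial_r\xi<0$, the level sets $\{\xi=s\}$ for $s>-1+\varepsilon$ meet the plane $\{z=0\}$ strictly to the left of $r=\phi_{k-1}(0)$, and nothing a priori controls how far to the left. So the catenoid area bound with $c=\phi_{k-1}(0)$, which you invoke for all $s$ near $-1$, is in fact valid only for $s\in(-1,-1+\varepsilon)$. Since $\int_{-1}^{-1+\varepsilon}\sqrt{F_\varepsilon(s)}\,ds\to 0$ as $\varepsilon\to 0$, the gain $c_1\delta\,\phi_{k-1}(0)^{n-1}$ you claim evaporates in the limit you care about. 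Shrinking $c$ to $q:=\tfrac14\phi_{k-1}(0)$ (as the paper does) fixes this only \emph{if} $\xi(q,0)>-1+\sigma$ for some fixed $\sigma$ — that is only one of the cases.

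The paper therefore splits into two genuinely different regimes, controlled by the values of $\xi$ at $P_1=(q,0)$ and $P_2=(q,\tfrac{k+1}{2})$. If $\xi(P_1)>-1+\sigma$ or $\xi(P_2)>0$ (Case 1), the level sets for $s$ in an interval of definite length pass outside $r=q$ (resp.\ outside $r=q$ at a higher level), and the catenoid area lemma gives a gain proportional to $q^{n-1}$ — this is close to the argument you sketch. But if \emph{both} $\xi(P_1)<-1+\sigma$ and $\xi(P_2)<0$ (Case 2), the catenoid comparison fails to give a uniform gain, and the paper switches to a completely different mechanism: on vertical slices with $r\in(q,\phi_{k-1}(0))$, the one-dimensional profile $z\mapsto\xi(r,z)$ is forced to rise from near $-1$ at $z=0$ to still below $0$ at $z=\tfrac{k+1}{2}$, and this ``stretching'' of the transition layer makes $\int_0^{b_\varepsilon}\big((\partial_z\xi)^2+F_\varepsilon(\xi)\big)\,dz\geq 4+\delta_1$ for some $\delta_1>0$ depending only on $k,\sigma$. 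Integrated over $r\in(q,\phi_{k-1}(0))$ this yields the gain. Your ``Main obstacle'' paragraph misidentifies this as an $\varepsilon$-level error estimate of the type in Case 2 of Proposition \ref{moun}; it is not an error estimate but a separate lower-bound mechanism, and without it the proof does not close. You should add the $P_1,P_2$ dichotomy and the one-dimensional stretching argument for the complementary case.
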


\begin{proof}
We still use $A\left(  s\right)  $ to be denote the area of the surface
$\left\{  \left(  r,z\right)  :\xi\left(  r,z\right)  =s\right\}  .$ Consider
the points $P_{1},P_{2}$ whose $\left(  r,z\right)  $ coordinates are given by
$\left(  q,0\right)  $ and $\left(  q,\frac{k+1}{2}\right)  $ respectively,
where $q=\frac{1}{4}\phi_{k-1}\left(  0\right)  $. Let $\sigma>0$ be a small
positive constant(indepedent of $\varepsilon$) to be determined later on.
There are two possibilities.

Case 1. $\xi\left(  P_{1}\right)  >-1+\sigma$ or $\xi\left(  P_{2}\right)
>0.$

Subcase 1. $\xi\left(  P_{1}\right)  >-1+\sigma.$

Using Lemma \ref{a} and the fact that the catenoid is a minimal surface, we
find that for $s\in\left(  -1+\varepsilon,\xi\left(  P_{1}\right)  \right)  ,$%
\[
A\left(  s\right)  \geq\frac{1}{n-1}a^{n-1}+\frac{\delta}{2}q^{n-1}.
\]
Hence by the coerea formula we get
\begin{align}
\int_{\Omega_{a}}\left(  \left\vert \nabla\xi\right\vert ^{2}+F_{\varepsilon
}\left(  \xi\right)  \right)   &  \geq2\int_{-1}^{1}A\left(  s\right)
\sqrt{F_{\varepsilon}\left(  s\right)  }ds.\nonumber\\
&  \geq\left(  \int_{\xi\left(  P_{1}\right)  }^{1}+\int_{-1}^{\xi\left(
P_{1}\right)  }\right)  \left(  A\left(  s\right)  \sqrt{F_{\varepsilon
}\left(  s\right)  }\right)  ds\nonumber\\
&  \geq\frac{a^{n-1}}{n-1}\left(  1-\xi\left(  P_{1}\right)  \right)  \left(
1+O\left(  \varepsilon\right)  \right)  \nonumber\\
&  +\left(  \frac{a^{n-1}}{n-1}+\frac{\delta}{2}q^{n-1}\right)  \left(
\xi\left(  P_{1}\right)  -1\right)  \left(  1+O\left(  \varepsilon\right)
\right)  \nonumber\\
&  \geq\frac{1}{n-1}a^{n-1}+\frac{\delta\sigma}{2}q^{n-1}+O\left(
\varepsilon\right)  .\label{eq2}%
\end{align}
provided that $\varepsilon$ is sufficiently small.

Subcase 2. $\xi\left(  P_{2}\right)  >0.$

Similarly as Subcase 1, from Lemma \ref{a}, we deduce that for $s\in\left(
\frac{1-k}{5},0\right)  ,$%
\[
A\left(  s\right)  \geq\frac{1}{n-1}a^{n-1}+\frac{\delta}{2}q^{n-1}.
\]
Using this lower bound, under the assumption that $\varepsilon$ is small, we
can estimate
\begin{align}
\int_{\Omega_{a}}\left(  \left\vert \nabla\xi\right\vert ^{2}+F_{\varepsilon
}\left(  \xi\right)  \right)   &  \geq\left(  \int_{-1}^{\frac{1-k}{5}}%
+\int_{\frac{1-k}{5}}^{0}+\int_{0}^{1}\right)  \left(  A\left(  s\right)
\sqrt{F_{\varepsilon}\left(  s\right)  }\right)  ds\nonumber\\
&  \geq\frac{2}{n-1}a^{n-1}+\frac{\delta\left(  k-1\right)  }{10}%
q^{n-1}+O\left(  \varepsilon\right)  .\label{eq1}%
\end{align}

Case 2. $\xi\left(  P_{1}\right)  +1<\sigma$ and $\xi\left(  P_{2}\right)
<0.$

Let us define
\begin{align*}
\Omega_{1}^{\ast} &  =\left\{  \left(  r,z\right)  \in\Omega_{a}:r>\phi
_{k-1}\left(  0\right)  \right\}  ,\\
\Omega_{2}^{\ast} &  =\left\{  \left(  r,z\right)  \in\Omega_{a}:r<\phi
_{k-1}\left(  0\right)  \right\}  .
\end{align*}
Then the energy in the region $\Omega_{1}^{\ast}$ has the estimate%
\begin{align}
\int_{\Omega_{1}^{\ast}}\left(  \left\vert \nabla\xi\right\vert ^{2}%
+F_{\varepsilon}\left(  \xi\right)  \right)   &  \geq\int_{\phi_{k-1}\left(
0\right)  }^{a}\left(  \int_{0}^{b_{\varepsilon}}\left(  \left(  \partial
_{z}\xi\right)  ^{2}+F_{\varepsilon}\left(  \xi\right)  \right)  dz\right)
r^{n-2}dr\nonumber\\
&  \geq\frac{4+O\left(  \varepsilon\right)  }{n-1}\left(  a^{n-1}-\left(
\phi_{k-1}\left(  0\right)  \right)  ^{n-1}\right)  .\label{es1}%
\end{align}
On the other hand, for $r\in\left(  q,\phi_{k-1}\left(  0\right)  \right)  ,$
using the fact that $\xi\left(  r,\frac{k+1}{2}\right)  <0$ and
\[
-1+\varepsilon<\xi\left(  r,0\right)  <-1+\sigma,
\]
we obtain%
\begin{equation}
\int_{0}^{b_{\varepsilon}}\left(  \left(  \partial_{z}\xi\right)
^{2}+F_{\varepsilon}\left(  \xi\right)  \right)  dz\geq2+\left(  \left(
\frac{2\left(  -1+\sigma\right)  }{k+1}\right)  ^{2}+1\right)  \frac{k+1}%
{2}+O\left(  \varepsilon\right)  .\label{1}%
\end{equation}
Then we can choose a small constant $\sigma>0$ such that the right hand side
of $\left(  \ref{1}\right)  $ is bounded below by a constant $4+\delta_{1},$
where $\delta_{1}>0$ is independent of $\varepsilon.$ Then we can estimate
\begin{align}
\int_{\Omega_{2}^{\ast}}\left(  \left\vert \nabla\xi\right\vert ^{2}%
+F_{\varepsilon}\left(  \xi\right)  \right)   &  \geq\int_{q}^{\phi
_{k-1}\left(  0\right)  }\left(  \int_{0}^{b_{\varepsilon}}\left(  \left(
\partial_{z}\xi\right)  ^{2}+F_{\varepsilon}\left(  \xi\right)  \right)
dz\right)  r^{n-2}dr\nonumber\\
&  \geq\frac{4+\delta_{2}}{n-1}\left(  \left(  \phi_{k-1}\left(  0\right)
\right)  ^{n-1}-q^{n-1}\right)  .\label{es2}%
\end{align}
Combining $\left(  \ref{es1}\right)  $ and $\left(  \ref{es2}\right)  ,$ we
deduce that when $\varepsilon$ is sufficiently small,
\begin{align}
\int_{\Omega_{a}}\left(  \left\vert \nabla\xi\right\vert ^{2}+F_{\varepsilon
}\left(  \xi\right)  \right)   &  \geq\frac{4}{n-1}\left(  a^{n-1}-\left(
\phi_{k-1}\left(  0\right)  \right)  ^{n-1}\right)  \nonumber\\
&  +\frac{4+\delta_{1}}{n-1}\left(  \left(  \phi_{k-1}\left(  0\right)
\right)  ^{n-1}-q^{n-1}\right)  +O\left(  \varepsilon\right)  \nonumber\\
&  \geq\frac{4a^{n-1}}{n-1}+\frac{\delta_{1}}{2\left(  n-1\right)  }\left(
1-\frac{1}{4^{n-1}}\right)  \left(  \phi_{k-1}\left(  0\right)  \right)
^{n-1}.\label{eq3}%
\end{align}
From equations $\left(  \ref{eq2}\right)  ,\left(  \ref{eq1}\right)  $ and
$\left(  \ref{eq3}\right)  ,$ we conclude the proof.
\end{proof}

For each fixed $k>0$ and large $a$, when $\varepsilon$ is small, with the help
of Lemma \ref{a} and the parabolic flow, we then get a family of mountain pass
type solutions $U_{\varepsilon,a}$(depending on $k$)$,$ with the energy
estimate
\begin{equation}
\frac{4}{n-1}a^{n-1}+\delta_{0}\leq\int_{\Omega_{a}}\left(  \left\vert \nabla
U_{\varepsilon,a}\right\vert ^{2}+F_{\varepsilon}\left(  U_{\varepsilon
,a}\right)  \right)  \leq\frac{4}{n-1}a^{n-1}+C,\label{Ener}%
\end{equation}
for some constant $C$ independent of $\varepsilon,a.$

Letting $\varepsilon\rightarrow0,$ up to a subsequence, $U_{\varepsilon,a}$
converges to a function $V_{a}$ solving%
\[
\left\{
\begin{array}
[c]{c}%
\Delta V_{a}=0,\text{ in }\Omega_{a}\cap\left\{  \left\vert V_{a}\right\vert
<1\right\}  ,\\
\left\vert \nabla V_{a}\right\vert =1,\text{ on }\Omega_{a}\cap\partial
\left\{  \left\vert V_{a}\right\vert <1\right\}  .
\end{array}
\right.
\]
Moreover, on $\partial\Omega_{a},$ $V_{a}$ satisfies the boundary condition
inherited from $U_{\varepsilon,a}$.

As $a$ tends to infinity, up to a subsequence, $V_{a}$ converges to a solution
$W$ of the free boundary problem%
\[
\left\{
\begin{array}
[c]{c}%
\Delta W=0,\text{ in }\left\{  \left\vert W\right\vert <1\right\}  ,\\
\left\vert \nabla W\right\vert =1,\text{ on }\partial\left\{  \left\vert
W\right\vert <1\right\}  .
\end{array}
\right.
\]
The next lemma states that $W$ behaves like a catenoid at infinity.

\begin{lemma}
Let $\Omega=\left\{  \left(  r,z\right)  :z>0\text{ and }\left\vert W\left(
r,z\right)  \right\vert <1\right\}  .$ Let $r_{0}$ be a large constant.
Suppose that in the region where $r>r_{0},$
\begin{align*}
\partial\Omega\cap\left\{  \left(  r,z\right)  :W\left(  r,z\right)
=1\right\}   &  =\left\{  \left(  r,z\right)  :z=f_{1}\left(  r\right)
\right\}  ,\\
\partial\Omega\cap\left\{  \left(  r,z\right)  :W\left(  r,z\right)
=-1\right\}   &  =\left\{  \left(  r,z\right)  :z=f_{2}\left(  r\right)
\right\}  .
\end{align*}
Then there exists $k^{\prime}\geq1$ such that
\begin{align*}
f_{1}\left(  r\right)  -k^{\prime}-1 &  \rightarrow0,\\
f_{2}\left(  r\right)  -k^{\prime}+1 &  \rightarrow0,
\end{align*}
as $r\rightarrow+\infty.$
\end{lemma}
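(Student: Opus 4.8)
The plan is to adapt the proof of Lemma \ref{Asy}, which is the corresponding statement in dimension $n=3$. Two things change: the two dimensional mean curvature quantity $rf'/\sqrt{1+f'^{2}}$ must be replaced by the $n$ dimensional one, $r^{n-2}f'/\sqrt{1+f'^{2}}$, and the limiting behaviour is a horizontal position rather than logarithmic growth, because for $n\ge4$ the relevant integrand decays like $r^{2-n}$, which is integrable at infinity (this is exactly why catenoids in $\mathbb{R}^{n}$, $n\ge4$, converge to a hyperplane, cf. $(\ref{fibar})$).

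\emph{Step 1.} Since $W$ is a locally uniform limit of the monotone functions $V_{a}$, we have $\partial_{r}W\le0$ and $\partial_{z}W\ge0$, and the strong maximum principle applied to the linear equations satisfied by $\partial_{r}W$ and $\partial_{z}W$ makes these strict in $\Omega$ (away from the axis); moreover, for $r>r_{0}$ the free boundary is a smooth graph by hypothesis together with the standard regularity theory. Differentiating $W=\pm1$ along the free boundary gives $f_{i}'(r)=-\partial_{r}W/\partial_{z}W>0$. Set
\[
a_{1}(r):=\frac{r^{n-2}f_{1}'(r)}{\sqrt{1+f_{1}'^{2}(r)}},\qquad a_{2}(r):=\frac{r^{n-2}f_{2}'(r)}{\sqrt{1+f_{2}'^{2}(r)}}>0 .
\]
The mean curvature of the radial graph $z=f_{i}(r)$ with respect to its upward unit normal equals $r^{2-n}a_{i}'(r)$. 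On $\{W=1\}$ this upward normal is the outward normal of $\{|W|<1\}$, whereas on $\{W=-1\}$ it is the inward one; hence Lemma \ref{mean curvature} gives $a_{1}'\ge0$ and $a_{2}'\le0$. Thus $a_{1}$ is positive and nondecreasing and $a_{2}$ positive and nonincreasing; in particular $a_{2}$ is bounded and $f_{2}'(r)\to0$.

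\emph{Step 2.} By the very argument used in Lemma \ref{Asy} --- the monotonicity of $W$ together with the De Giorgi type classification of monotone solutions of $(\ref{EN})$ in \cite{Wangke} --- the function $W$ is, near the free boundary, asymptotic as $r\to+\infty$ to a suitable vertical translate of the profile $\mathcal{H}$; consequently $f_{1}(r)-f_{2}(r)\to2$ and, combined with Step 1, $\lim_{r\to\infty}a_{1}(r)=\lim_{r\to\infty}a_{2}(r)=:k\in[0,+\infty)$. Solving for $f_{i}'$ then gives, for $r$ large,
\[
f_{i}'(r)=\frac{a_{i}(r)}{\sqrt{r^{2(n-2)}-a_{i}^{2}(r)}}=k\,r^{2-n}+o\bigl(r^{2-n}\bigr),
\]
and since $n\ge4$, so $2-n\le-2<-1$, we have $f_{i}'\in L^{1}([r_{0},+\infty))$; hence $f_{i}(r)$ tends to a finite limit. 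Writing $k':=1+\lim_{r\to\infty}f_{2}(r)$, we get $\lim_{r\to\infty}f_{1}(r)=2+\lim_{r\to\infty}f_{2}(r)=k'+1$, which is the claimed asymptotics. Finally $k'\ge1$: since $W$ depends only on $r$ and $|z|$ and $\partial_{z}W>0$ for $z>0$, the slice $\{z:W(r,z)=-1\}$ is, for each $r$, a symmetric interval about $z=0$, so its upper boundary satisfies $f_{2}(r)\ge0$ and therefore $\lim_{r\to\infty}f_{2}(r)\ge0$.

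The delicate point is the beginning of Step 2: proving that the free boundary becomes flat, that the two sheets stay at mutual distance $2$, and hence that $a_{1}$ and $a_{2}$ converge to a common finite limit. This rests on uniform nondegeneracy of the free boundary of $W$ at large radius together with a blow up/classification argument, i.e.\ on exactly the ingredients already developed in the $n=3$ analysis (Sections \ref{Sec3}--\ref{Sec4} and Lemma \ref{Asy}); the remaining modifications --- the weight $r^{n-2}$ in place of $r$ and the faster decay $f_{i}'=O(r^{2-n})$ that makes $f_{i}$ converge rather than grow like $k\ln r$ --- are routine.
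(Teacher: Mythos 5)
Your proof follows the same route the paper intends: the paper's own argument for this lemma is literally ``the mean curvatures have a sign; then the proof is similar as that of Lemma~\ref{Asy},'' and your write-up is precisely that adaptation, with the weight $r$ replaced by $r^{n-2}$ and the key observation that $\int^{\infty} r^{2-n}\,dr<\infty$ for $n\ge4$, so the free boundary sheets level off to constant heights instead of growing like $k\ln r$. Step~1 (sign of mean curvature $\Rightarrow$ monotonicity of $a_{1},a_{2}$), the use of the De Giorgi--type classification from \cite{Wangke} to get $f_{1}-f_{2}\to2$, and the symmetry argument for $k'\ge1$ are all in line with the paper.

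One step is stated without justification and, unlike in $n=3$, does not actually follow from what precedes it: the assertion $\lim_{r\to\infty}a_{1}(r)=\lim_{r\to\infty}a_{2}(r)$. In Lemma~\ref{Asy} this equality is forced because $\int^{\infty} r^{-1}dr$ diverges, so unequal limits would make $f_{1}-f_{2}$ diverge; for $n\ge4$ the integral $\int^{\infty} r^{2-n}dr$ converges, so $a_{1}$ and $a_{2}$ may perfectly well have different limits while $f_{1}-f_{2}\to2$. Likewise, $a_{1}$ is nondecreasing, so its boundedness is not automatic and is not argued in Step~1 (only $a_{2}$, being nonincreasing and positive, is obviously bounded). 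You in fact use ``$a_{i}\to k<\infty$'' to deduce $f_{i}'\in L^{1}$. None of this jeopardizes the conclusion, and the fix is immediate: $a_{2}$ bounded gives $f_{2}'=O(r^{2-n})\in L^{1}$, so $f_{2}$ converges; then $f_{1}=f_{2}+(f_{1}-f_{2})$ converges as well, and the asymptotics follow. I would rephrase Step~2 along these lines rather than asserting the (unproved, possibly false) flux identity $\lim a_{1}=\lim a_{2}$.
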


\begin{proof}
The mean curvatures of the surfaces $z=f_{1}\left(  r\right)  $ and
$z=f_{2}\left(  r\right)  $ have a sign. That is,
\[
\left(  \frac{r^{n-2}f_{1}^{\prime}}{\sqrt{1+f_{1}^{\prime2}}}\right)
^{\prime}\geq0,
\]
and
\[
\left(  \frac{r^{n-2}f_{2}^{\prime}}{\sqrt{1+f_{2}^{\prime2}}}\right)
^{\prime}\leq0.
\]
Then the proof of this lemma is similar as that of Lemma \ref{Asy}.
\end{proof}

Our next purpose is to show that $W$ has the desired asymptotic behavior, that
is, $k^{\prime}=k.$ To prove this, we need the following lemma, which is a
result parallel to Lemma \ref{y}.

\begin{lemma}
\label{a2}Suppose $k\neq k^{\prime}.$ Assume $A$ is large and $A<a.$ Let
$\xi=\xi\left(  r\right)  $ be a $C^{1}$ monotone increasing function
satisfying $\xi\left(  A\right)  =k^{\prime}$ and $\xi\left(  a\right)  =k.$
Then
\[
\int_{A}^{a}\sqrt{1+\left(  \xi^{\prime}\left(  r\right)  \right)  ^{2}%
}r^{n-2}dr\geq\frac{a^{n-1}-A^{n-1}}{n-1}+\frac{1}{2}\sqrt{A}\left(
k-k^{\prime}\right)  .
\]

\end{lemma}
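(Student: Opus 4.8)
The plan is to reduce the integral on the left to an "area of a surface of revolution" type functional and then exploit the fact that catenoids are area-minimizing among such surfaces, just as in the proof of Lemma \ref{y}, but now with the singular right-hand side $\frac{1}{2}\sqrt{A}(k-k')$ that the $r^{n-2}$ weight forces us to track. Recall that in these coordinates, $\int_A^a\sqrt{1+\xi'^2}\,r^{n-2}\,dr$ is (up to the constant $\omega_{n-2}$ of the unit sphere) the area of the hypersurface in $\mathbb{R}^n$ obtained by revolving the graph $z=\xi(r)$ around the $z$-axis, restricted to $r\in[A,a]$. The first step is therefore to build an auxiliary comparison hypersurface: extend $\xi$ by the constant value $k'$ on the inner annulus $r\in[A_0,A]$ for a suitably chosen small fixed radius $A_0$ (independent of $a$), set $\bar\xi\equiv k'$ there and $\bar\xi=\xi$ on $[A,a]$, and note that $\int_{A_0}^A\sqrt{1+\bar\xi'^2}\,r^{n-2}\,dr=\frac{A^{n-1}-A_0^{\,n-1}}{n-1}$.

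Next I would choose the catenoid through the two endpoints. The point $(A,k')$ and $(a,k)$ (viewed as $(r,z)$ pairs) both lie on a rescaled vertical catenoid $z=\rho\,\bar\phi^{*}(\rho^{-1}r)+d$ for appropriate parameters $\rho,d$, solvable for $A$ large and $a/A$ large because $\bar\phi^{*}$ ranges over all sufficiently large $z$-values as $r$ runs over $[A,a]$; here one uses the expansion $\bar\phi(r)\sim c_n-c_n'r^{3-n}$ recalled in the excerpt, which gives the dominant behaviour $\rho\approx$ (something comparable to $\sqrt{A}$, not $\log$, because in dimension $n>3$ the catenoid ends are bounded and the "neck size" required to span a fixed vertical gap $k-k'$ over a horizontal span $[A,a]$ scales like $\sqrt{A}$). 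Then, since the catenoid is a minimal hypersurface of revolution and hence minimizes the weighted area $\int\sqrt{1+\psi'^2}\,r^{n-2}\,dr$ among all profiles with the same endpoints and the same constant inner extension, we get
\[
\int_{A_0}^{a}\sqrt{1+\bar\xi'^2}\,r^{n-2}\,dr\ \geq\ \int_{A_0}^{a}\sqrt{1+\eta'^2}\,r^{n-2}\,dr,
\]
where $z=\eta(r)$ denotes that catenoid (extended by a constant to the left, which only decreases area). Subtracting the trivial inner piece $\frac{A^{n-1}-A_0^{\,n-1}}{n-1}$ from both sides recovers $\int_A^a\sqrt{1+\xi'^2}\,r^{n-2}\,dr$ on the left and a clean catenoidal integral on the right.

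The final step is to evaluate (or lower-bound) the catenoidal integral $\int_{A_0}^a\sqrt{1+\eta'^2}\,r^{n-2}\,dr$ using the parametrization \eqref{para}, exactly as in the displayed computation preceding Lemma \ref{a}: writing $\eta=\cosh((n-2)s)$ after rescaling, one gets $\frac{\rho^{n-1}}{n-2}\int\cosh^{\frac{1}{n-2}+1}((n-2)s)\,ds$, whose leading term is $\frac{a^{n-1}}{n-1}$ and whose correction, after subtracting the inner annulus contribution, is bounded below by a term of order $\rho\cdot|k-k'|$; since $\rho\gtrsim\sqrt{A}$, this yields the claimed $\frac{1}{2}\sqrt{A}(k-k')$ (absorbing universal constants into the choice of $A_0$ and using $A$ large). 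The main obstacle I anticipate is the bookkeeping in the third step: getting the correction term in the catenoidal area to be genuinely of size $\sqrt{A}|k-k'|$ rather than something smaller requires a careful asymptotic analysis of $\rho$ and $d$ in terms of $A,a,k,k'$ via the profile expansion $\bar\phi^{*}(r)\sim c_n-c_n'r^{3-n}$, and one must check that the lower-order errors $O(a^{3-n})$ and the error from extending to the constant on $[A_0,A]$ do not eat the gain — this is the dimension-specific analogue of the logarithmic estimate in Lemma \ref{y}, where the geometry is qualitatively different because catenoid ends in $\mathbb{R}^n$, $n>3$, are asymptotically flat.
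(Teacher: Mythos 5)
Your strategy---compare $\xi$ against the catenoid through the two endpoints and evaluate the catenoidal area excess---is a genuinely different route from the paper's, which never invokes the catenoid at all. The paper's proof is a short pointwise estimate: write $\sqrt{1+\xi'^2}-1=\xi'^2/(\sqrt{1+\xi'^2}+1)$, split $[A,a]$ into the set $S$ where $\frac{\xi'}{\sqrt{1+\xi'^2}+1}r^{n-5/2}\le 1$ and its complement, observe that $\int_S\xi'\,dr\le C\,A^{7/2-n}$ is negligible against $k-k'$ once $A$ is large (this is where $n\ge 4$ enters), and on $[A,a]\setminus S$ use $\frac{\xi'^2}{\sqrt{1+\xi'^2}+1}\,r^{n-2}\ge \xi'\,r^{1/2}\ge\sqrt{A}\,\xi'$. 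Integrating over the complement gives the claim; no minimal-surface theory or asymptotics of $\bar\phi$ are needed.

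Your route, modeled on Lemma \ref{y}, is not implausible but it is not closed, and what you call ``bookkeeping'' in the third step is really the entire content of the lemma. Two concrete inaccuracies: the neck scale of the comparison catenoid is $\rho\sim A^{(n-3)/(n-2)}$, which equals $\sqrt A$ only when $n=4$ (though $\rho\gtrsim\sqrt A$ does hold for all $n\ge 4$); and the weighted-area excess of that catenoid over the flat annulus is of order $A^{n-3}(k-k')^2$, not $\rho\,(k-k')$ as you state. Both of these happen to dominate $\frac12\sqrt A\,(k-k')$ once $A$ is large, so the route can likely be completed, but the central asymptotic is asserted rather than derived. You would in addition need to justify that this particular catenoid minimizes $\int\sqrt{1+\psi'^2}\,r^{n-2}dr$ among graphs with the prescribed endpoints (its neck may lie inside or to the left of $[A,a]$, and not every catenoid through two given boundary spheres is area-minimizing), which requires a calibration or foliation argument. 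The paper's elementary estimate sidesteps all of this.
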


\begin{proof}
We compute
\[
\int_{A}^{a}\left(  \sqrt{1+\left(  \xi^{\prime}\left(  r\right)  \right)
^{2}}-1\right)  r^{n-2}dr=\int_{A}^{a}\frac{\xi^{\prime}\left(  r\right)
^{2}}{\sqrt{1+\left(  \xi^{\prime}\left(  r\right)  \right)  ^{2}}+1}%
r^{n-2}dr.
\]
Let $S\subset\left[  A,a\right]  $ be the set where%
\[
\left\vert \frac{\xi^{\prime}\left(  r\right)  }{\sqrt{1+\left(  \xi^{\prime
}\left(  r\right)  \right)  ^{2}}+1}r^{n-\frac{5}{2}}\right\vert \leq1.
\]
Then using the fact that $A$ is large, which implies that in $S,$ $\xi
^{\prime}$ is small, we obtain%
\[
\left\vert \int_{S}\xi^{\prime}\left(  r\right)  dr\right\vert \leq\frac
{8}{2n-7}A^{\frac{7}{2}-n}.
\]
Therefore, when $A$ is sufficiently large, from $\int_{\left[  A,a\right]
}\xi^{\prime}\left(  r\right)  dr=k-k^{\prime},$ we get
\begin{align*}
&  \int_{\left[  A,a\right]  \backslash S}\frac{\xi^{\prime}\left(  r\right)
^{2}}{\sqrt{1+\left(  \xi^{\prime}\left(  r\right)  \right)  ^{2}}+1}%
r^{n-2}dr\\
&  \geq\int_{\left[  A,a\right]  \backslash S}\xi^{\prime}\left(  r\right)
r^{\frac{1}{2}}dr\\
&  \geq\sqrt{A}\int_{\left[  A,a\right]  \backslash S}\xi^{\prime}\left(
r\right)  dr\\
&  \geq\frac{1}{2}\sqrt{A}\left(  k-k^{\prime}\right)  ,
\end{align*}
This implies that
\[
\int_{A}^{a}\left(  \sqrt{1+\left(  \xi^{\prime}\left(  r\right)  \right)
^{2}}-1\right)  r^{n-2}dr\geq\frac{1}{2}\sqrt{A}\left(  k-k^{\prime}\right)  .
\]
The proof is then completed.
\end{proof}

With Lemma \ref{a2} at hand, we can use the energy upper bound $\left(
\ref{Ener}\right)  $ and proceed similarly as the proof of Proposition
\ref{Asym}, to conclude that for the solution $W,$ there holds $k^{\prime}=k.$
That is, the nodal set of $W$ is asymptotic to $z=k$ at infinity. Denote this
solution by $W_{k}.$

The next step is to analyze the precise asymptotic behavior of $W_{k},$
uniformly in $k$ as $k\rightarrow0.$ This can be achieved from similar
arguments as that of Section \ref{Sec5}, with straightforward
modifications(The decay rate of the principle curvatures are different).
Finally, same as the $3$D case, suitable blow up sequence of $W_{k}$ near the
origin then converges to a desired solution of the one phase free boundary problem.

\end{document}